  \newtheorem{thm}{Theorem} 
  \newtheorem{prop}{Proposition}[section]
  \newtheorem{lemma}{Lemma}[section]
\theoremstyle{definition}
  \newtheorem{definition}{Definition}
  \newtheorem{rem}{Remark}
  \newtheorem{ack}{Acknowledgements}
\newcommand{\ad}{\operatorname{ad}}
\newcommand{\Image}{\operatorname{Im}}
\newcommand{\BSL}{\operatorname{BSL}}
\newcommand{\SL}{\operatorname{SL}}
\newcommand{\Ad}{\operatorname{Ad}}
\newcommand{\trace}{\operatorname{tr}_q}
\newcommand{\bx}{\boldsymbol{x}}
\newcommand{\by}{\boldsymbol{y}}
\begin{document}
\baselineskip17pt
\title[Quantum fundamental group and its $\mathrm{SL}_{2}$ representation]{Quantum fundamental group of knot and its $\mathrm{SL}_{2}$ representation}
\author[Jun Murakami]{Jun Murakami}
\address{Department of Mathematics, 
Faculty of Science and Engineering,
Waseda University,
3-4-1 Ohkubo, Shinjuku-ku, Tokyo, 169-8555, Japan}
\email{murakami@waseda.jp}

\author[Roland van der Veen]{Roland van der Veen}
\address{University of Groningen, Bernoulli Institute, P.O. Box 407, 9700 AK Groningen, The Netherlands}
\email{roland.mathematics@gmail.com}

\begin{abstract}
The theory of bottom tangles is used to construct a quantum fundamental group.  
On the other hand, the skein module  is considered as a quantum analogue of the $\mathrm{SL}(2)$ representation of the fundamental group.   
Here we construct the skein module of a knot complement by using the bottom tangles.  
We first construct the universal space of quantum representations, which is a quantum analogue of  the fundamental group, 
and then factor it by the skein relation to get the skein module.  
We also investigate the action of the quantum torus to the boundary of complement, and derive the recurrence relation of the colored Jones polynomial, which is known as $A_q$ polynomial.  
\end{abstract}

\thanks{This work was supported by JSPS KAKENHI Grant Numbers JP20H01803, JP20K20881}
\maketitle

%

\section*{Introduction}
The aim of this paper is to give a concrete construction of the quantum fundamental group proposed by Habiro in \cite{H2} for knot complements.  A similar construction was done for braided Hopf algebras instead of bottom tangles in \cite{MV}, but the construction here is more universal.  
We also get the skein module of the complement by imposing the skein relation to the quantum fundamental group. 
The resulting skein module is represented by the stated skein module introduced in \cite{Le2}.    
\par
Knots and links are isotopic to plat closures of braids with an even number of strings as in Figure \ref{fig:plat}.
Plat closures of two braids $b_1$ and $b_2$ are isotopic if and only if there is a sequence of plat Markov moves introduced in \cite{CG} which transforms $b_1$ to $b_2$.  
\par
\begin{figure}[htb]
\[
\begin{matrix}
\includegraphics[scale=1]{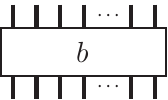}
\end{matrix}
\ \ 
\longrightarrow
\ \ 
\hat b = 
\begin{matrix}
\includegraphics[scale=1]{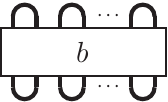}
\end{matrix}
\]
\caption{Plat closure of $b \in B_{2k}$.}
\label{fig:plat}
\end{figure}
%
Let $K$ be a knot or a link and $b \in B_{2k}$ be a braid whose plat closure $\widehat b$ is isotopic to $K$. 
Then we get a presentation of the fundamental group of 
the complement of $K$ in terms of $b$ as follows.  
Let $D_{k}$ be the punctured disk with $k$ punctures and a base point $p$ on the boundary.  
Then the fundamental group $\pi_1(D_k, p)$ is isomorphic to the free group $F_k$ with $k$ generators $x_1$, $\cdots$, $x_k$ where $x_i$ is the loop enclosing the $i$-th puncture of $D_k$ as in Figure \ref{fig:generators}.  
Similarly, $\pi_1(D_{2k}, q)$  is the free group generated by $2k$ generators $y_1$, $\cdots$, $y_{2k}$.  
\begin{figure}[htb]
\[
\begin{matrix}
x_i & 
{\begin{matrix}
\includegraphics[scale=1]{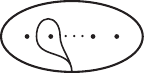}_{\text{\normalsize $D_k$}}
\\
p\ \ \ \ 
\end{matrix}}
& \qquad & 
y_i & 
{\begin{matrix}
{\includegraphics[scale=1]{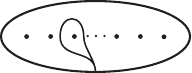}}_{\text{\normalsize $D_{2k}$}}
\\ q\ \ \ \ 
\end{matrix}}
\end{matrix}
\]
\caption{Generators $x_i$ and $y_i$ of $\pi_1(D_k, p)$ and $\pi_1(D_{2k}, p)$.}
\label{fig:generators}
\end{figure}
The standard generators  $\sigma_1$, $\cdots$, $\sigma_{2k-1}$  of $B_{2k}$ act on
 $y \in \pi_1(D_{2k}, p)$ 
by pushing down the disk $D_{2k}$ along the  generator.
In other words,    
 each generator $\sigma_i$  acts on $D_{2k}$ by swapping the $i$-th and $(i+1)$-th punctures by a counter-clockwise rotation as in Figure \ref{fig:generatoraction}. 
 \begin{figure}[htb]
 \[
 \begin{matrix}
 \includegraphics[scale=1]{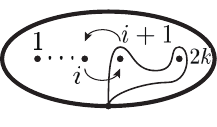}
 \end{matrix}
 \ 
 \longrightarrow
 \ 
 \begin{matrix}
 \includegraphics[scale=1]{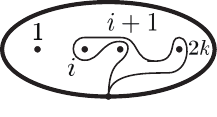}
 \end{matrix}
 \]
 \caption{The action of the braid generator $\sigma_i$ on $\pi_1(D_{2k})$.}
 \label{fig:generatoraction}
 \end{figure}
This defines an action of $b \in B_{2k}$ on $y \in\pi_1(D_{2k}, p)$, and 
the result $b\cdot y$ is obtained by pushing down $y$ along $b$ as in Figure \ref{fig:push}.  
\begin{figure}[htb]
\[
\begin{matrix}
\begin{matrix}
\begin{matrix}
\includegraphics[scale=0.8]{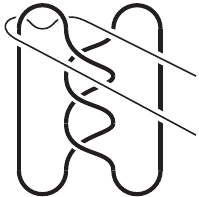}
\end{matrix}
&
\begin{matrix}
\includegraphics[scale=0.8]{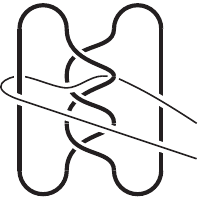}
\end{matrix}
&
\begin{matrix}
\includegraphics[scale=0.8]{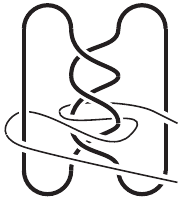}
\end{matrix}
&
\begin{matrix}
\includegraphics[scale=0.8]{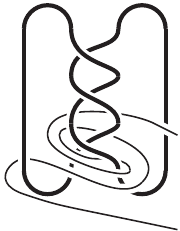}
\end{matrix}
&
\begin{matrix}
\includegraphics[scale=0.8]{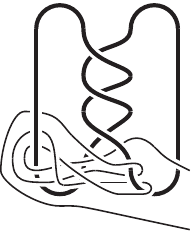}
\end{matrix}
\\
\begin{matrix}
\includegraphics[scale=0.9]{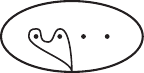}
\end{matrix}
&
\begin{matrix}
\includegraphics[scale=0.9]{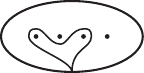}
\end{matrix}
&
\begin{matrix}
\includegraphics[scale=0.9]{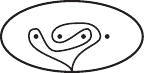}
\end{matrix}
&
\begin{matrix}
\includegraphics[scale=0.9]{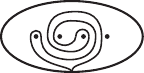}
\end{matrix}
&
\begin{matrix}
\includegraphics[scale=0.9]{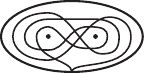}
\end{matrix}
\end{matrix}
\\
\qquad
\delta(x_1)=y_1y_2
\overset{\sigma_2}{\longrightarrow}
y_1y_3
\overset{\sigma_2}{\longrightarrow}
y_1y_3^{-1}y_2y_3
\overset{\sigma_2}{\longrightarrow}
y_1y_3^{-1}y_2^{-1}y_3 y_2 y_3
\overset{\varpi}{\longrightarrow}
x_1x_2^{-1}x_1x_2 x_1^{-1} x_2 = r_1
\end{matrix}
\]
\caption{Pushing a trivial element of $\pi_1(S^3\setminus K, p)$ along the plat presentation of the trefoil knot to get a relation of $\pi_1(S^3\setminus K, p)$.}
\label{fig:push}
\end{figure}
\par
Let $\delta$ be the homomorphism from $\pi_1(D_k, p)$ to $\pi_1(D_{2k}, q)$ and $\varpi$ be the  homomorphism from $\pi_1(D_{2k}, q)$ to $\pi_1(D_k, p)$ given by
 \[
 \delta(x_i)=y_{2i-1}y_{2i},
 \qquad
\begin{cases}\varpi(y_{2i-1})=x_i,
\\
\varpi(y_{2i})=x_i^{-1}.
\end{cases}
\quad(1 \leq i \leq k)  
\]
Then $\varpi\circ \delta(x) = 1$ for  $x \in \pi_1(D_k, p)$.  
Let $\iota_1$ and $\iota_2$ be the inclusions of $D_k$ into $D_{2k}$ and $D_{2k}$ into $S^3\setminus K$ respectively as in Figure \ref{fig:inclusion},  
$\iota = \iota_2\circ \iota_1$ and
$\iota_{1*}$, 
$\iota_*$ be the induced homomorphisms of $\iota_{1}$, $\iota$ from $\pi_1(D_k, p)$ to $\pi_1(D_{2k}, p)$, $\pi_1(D_k, p)$ to $\pi_1(S^3\setminus K, p)$ respectively.  
\begin{figure}[htb]
\[
\begin{matrix}
\includegraphics[scale=1]{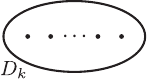}
\end{matrix}
\overset{\iota_1}{\longrightarrow}
\begin{matrix}
\includegraphics[scale=1]{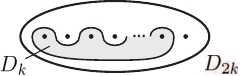}
\end{matrix}
\]
\[
\begin{matrix}
\includegraphics[scale=0.8]{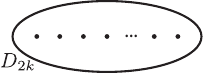}
\end{matrix}
\overset{\iota_2}{\longrightarrow}
\begin{matrix}
\includegraphics[scale=1]{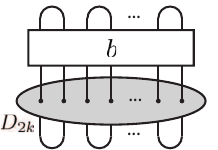}
\end{matrix}
\]
\caption{Inclusions $\iota_1 : D_k \to D_{2k}$ and $\iota_2 : D_{2k} \to S^3 \setminus K$ where $K = \widehat b$.}
\label{fig:inclusion}
\end{figure}
%
Let 
$r_i =\varpi(b\cdot \delta(x_i))$,
then we have $\iota_*(r_i) = 1$ and it is known that 
\[
\pi_1(S^3\setminus K) \cong 
F_k/(r_1, \cdots, r_k),
\]
which is equivalent to the Wirtinger representation of $\pi_1(S^3\setminus K)$.
Moreover, we have
\begin{equation}
\iota_*\Big(\varpi\big((b\cdot\delta(x)) \iota_{1*}(y)\big)\Big) = \iota_*(y).   \quad 
(x, y \in \pi_1(D_k, p))
\label{eq:brelation}
\end{equation}
This relation is equivalent to the relations $r_1=1$, $\cdots$, $r_k=1$. 
However, in quantized case, this relation does not come from $r_1=1$, $\cdots$, $r_k=1$, and we use the relation \eqref{eq:brelation} instead of 
$r_1=1$, $\cdots$, $r_k=1$ to define the quantum fundamental group.  
\begin{figure}[htb]
\[
\begin{matrix}
\qquad\qquad
\begin{matrix}
\delta(x)
\\[30pt]
\iota_{1*}(y)
\end{matrix}
\begin{matrix}
\includegraphics[scale=0.8]{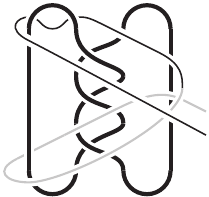}
\end{matrix}
&&
\qquad
\begin{matrix}
{}\\[30pt]
b \cdot\delta(x)
\\
\iota_{1*}(y)
\end{matrix}
\begin{matrix}
\includegraphics[scale=0.8]{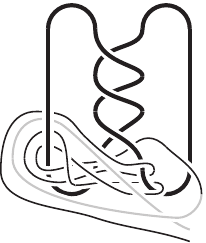}
\end{matrix}
\\
\delta(x) \iota_{1*}(y)
\qquad
\begin{matrix}
\includegraphics[scale=0.9]{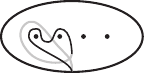}
\end{matrix}
&&
b \cdot\delta(x) \iota_{1*}(y)
\qquad
\begin{matrix}
\includegraphics[scale=0.9]{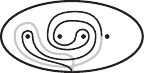}
\end{matrix}
\end{matrix}
\]
\caption{Two elements $\delta(x) \iota_{1*}(y)$ and $(b\cdot\delta(x)) \iota_{1*}(y)$ in $\pi_1(D_{2k}, p)$ which are equal in $\pi_1(S^3\setminus K, p)$.}
\label{fig:newrelation}
\end{figure}
\par
The purpose of this paper is to construct quantum analogue of $\pi_1(S^3\setminus K)$ and get the skein module from it.  
The quantum analogue of $\pi_1(S^3\setminus K)$ is obtained by replacing  $\pi_1(D_k, p)$ with linear combinations of bottom tangles.  
In Figure \ref{fig:push}, the path in $S^3 \setminus \hat b$ is projected to $D_{2k}$.  
The bottom tangle is obtained by replacing the crossings of the path as in Figure \ref{fig:crossing} by considering the height of the points of the path.  In other words, we replace homotopy classes of paths by isotopy classes of paths.
\begin{figure}[htb]
\[
\begin{matrix}
\epsfig{file=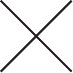, scale=1}
\end{matrix}
\ \longrightarrow\ 
\begin{matrix}
\epsfig{file=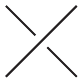, scale=1}
\end{matrix}
\ \text{or}\ 
\begin{matrix}
\epsfig{file=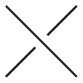, scale=1}
\end{matrix}
\]
\caption{A quantization of paths.}
\label{fig:crossing}
\end{figure}
For example, the last diagram in Figure \ref{fig:push} is modified as in Figure \ref{fig:bottomhopf}.  
Next we apply the Kauffman bracket skein theory to bottom tangles to obtain a quantum analogue of the $\SL(2)$ representation space.  
\begin{figure}[hbt]
\[
\begin{matrix}
\begin{matrix}
\includegraphics[scale=1.2]{hopfdisk6}
\end{matrix}
&
\qquad
\longrightarrow
&
\begin{matrix}
\includegraphics[scale=1.2]{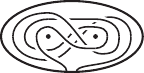}
\end{matrix}
\\
\text{path in $D_2$}
&&
\text{corresponding bottom tangle on $D_2$}
\end{matrix}
\]
\caption{Bottom tangle version of the path in $D_k$.  
A twist is added to adjust the framing.}
\label{fig:bottomhopf}
\end{figure}
\par
We first expose the theory of bottom tangles developed by Habiro in \cite{H} as a quantization of $\pi_1(D_{k}, p)$ which is isomorphic to the free group with $k$ generators.  
Let $\mathbb{C}\pi_1(D_k, p)$ be the group algebra which is the linear space spanned by formal linear combinations of elements of $\pi_1(D_k, p)$.  
Then $\mathbb{C}\pi_1(D_k, p)$ has  a Hopf algebra structure.  
Let $\mathcal{T}_k$ be the space of linear combinations of bottom tangles with $k$ punctures, then 
$\mathcal{T}_k$ has a braided Hopf algebra, which is a generalization of the Hopf algebra structure of $\mathbb{C}\pi_1(D_k, p)$.  
\par
Let $K$ be a knot or link given by the plat closure of  $b\in B_{2k}$.  
We assign a bottom tangle $\widehat{T}_b$ from $b$ representing the plat closure of $b$ with twined paths and $k$ extra paths as in Figure \ref{fig:tbxy}.  
Here a  Hopf diagram is used to explain a bottom tangle, the blank circle is the bottom tangle representing the antipode,  and  $\varepsilon$ is the bottom tangle representing the counit.  
See Section 2 for the details.  
\begin{figure}[htb]
\[
\widehat{T}_b : 
\begin{matrix}
\includegraphics[scale=0.8]{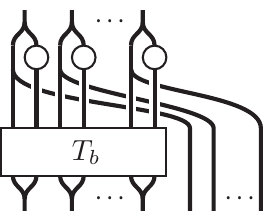}
\end{matrix}
\ ,
\qquad
\varpi\Big(\big(b\cdot\delta(x)\big) \iota_{1*}(y)\Big) : 
\begin{matrix}
\includegraphics[scale=0.8]{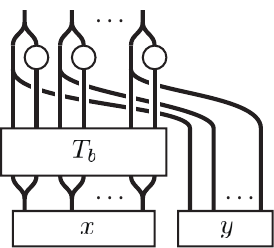}
\end{matrix}
\]
\caption{The bottom tangle $\widehat{T}_b$ to explain $\varpi\Big(\big(b\cdot\delta(x)\big) \iota_{1*}(y)\Big)$, where
$T_b$ is the bottom tangle representing the action of $b\in B_{2k}$.}
\label{fig:tbxy}
\end{figure}
Let $I_b$ be a submodule of $\mathcal{T}_k$ which is spanned by the set $\{\widehat{T}_b(x \otimes y) - \varepsilon(x) \otimes y: x, y \in \mathcal{T}_k\}$.      
Then, the quotient space $\mathcal{A}_b = \mathcal{T}_k/I_b$ is a quantum analogue of $\mathbb{C}\pi_1(S^3\setminus K)$, which is given by a quotient of  $\mathbb{C}F_k$ divided by the ideal generated by the relations.  
If $b$ and $b'$ represent isotopic knots, then  it is shown that $\mathcal{A}_b$ and $\mathcal{A}_{b'}$ is isomorphic in Theorem \ref{thm:main}, and we call  $\mathcal{A}_b$ {\it the quantum fundamental group} of $K$.    
To prove Theorem \ref{thm:main}, we use the Markov moves for plat presentations introduced in \cite{CG}.  
\par
To get a quantum analogue of the space of $\SL(2)$ representations, the Kauffman bracket skein relation is imposed upon the bottom tangles. 
By applying the skein relation to the bottom tangles, we get the skein algebra  $\mathcal{S}_k$ of the  punctured disk $D_k$ with a base line.  
Let $\widetilde{I}_b$ be the image of $I_b$ in $\mathcal{S}_k$,  
we get the quotient space  
$\widetilde{\mathcal{A}}_b =\mathcal{S}_k/\widetilde{I}_b$.  
By specializing the parameter for the quantization to the classical case, $\widetilde{I}_b$ becomes the ideal corresponding to the relations of the matrix elements of the generators of $\pi_1(S^3\setminus K)$.  
So we call  $\widetilde{\mathcal{A}}_b$  {\it the space of quantum $\SL(2)$ representations} of $K$.  
Moreover, the adjoint coinvariant part of  $\widetilde{\mathcal{A}}_b$ is isomorphic to the skein module of the complement of $\widehat{b}$ in \cite{B}, \cite{PS}, which descends to the ring defining the classical character variety.  
\par
It is known that the recurrence relation of the colored Jones polynomial is obtained by looking at the kernel of
 the action of the quantum torus corresponding to the boundary of complement.  
 This kernel is called the peripheral ideal in \cite{GS}.  
We explain how the quantum torus acts on $\widetilde{\mathcal{A}}_b$.
By using this action, we obtain the recurrence relations of Jones polynomials for some simple examples, which is called the $A_q$-polynomials.   
\par
The paper is organized as follows. 
In Section 1, we review the theory of bottom tangles and its rich algebraic structures.  
We also investigate the braid group action on the bottom tangles.  
In Section 2, we construct the quantum fundamental group in terms of bottom tangles by using a plat presentation of a knot, and show that this space is an invariant of the knot.  
In Section 3, the Kauffman bracket skein relation is imposed to bottom tangles and we get the skein algebra of a punctured disk.  
In Section 4, we investigate the space of quantized $\SL(2)$ representations of $K$, which is the quotient of the quantum fundamental group by the skein relations.   
In Section 5, some simple examples are investigated. 
Some final remarks are given in Section 6.   
%
\begin{ack}
The authors would like to thank Julien Korinman for useful discussions about the stated skein algeras.  
\end{ack}
\section{Bottom tangles}
In this section, we extend the space of representations introduced in \cite{MV} to a universal one by using  Habiro's bottom tangles.  

\subsection{Bottom tangles}
The notion of the bottom tangles and their algebraic operations are introduced by Habiro  \cite{H}, and, in this paper, we introduce these algebraic operations in an opposite (dual) direction so that these operations fit with the stated skein algebra theory.  
\par
Let $\mathcal{R}$ be a commutative ring with a unit.  
Let $D_k$ be a closed disk with $k$ punctures $q_1$, $q_2$, $\cdots$, $q_k$ inside $D_k$ and let $p_0$ be a point in $\partial D_k$,  which is the base point of the ribbons we are going to introduce.  
\begin{definition}
Let $T = (c_1, \cdots, c_n)$ 
be a sequence of embeddings of 
$[0, 1]$ into $D_k \times [0,1]$ equipped with framings satisfying the following.  
\begin{enumerate}
\item The embedding $c_i$ is contained in $D_k\times(0,1)$ and  $c_1$, $\cdots$, $c_n$ do not have any intersection point.  
\item
Let $h_i$ be the composition of $c_i$ with the projection of $D_k\times[0,1]$ on the second component,  
then $h_1(1)< h_1(0)< h_2(1) < h_2(0) < \cdots < h_n(1)< h_n(0)$.  
\item
The end points of the $c_i$ are in $p_0\times[0,1]$ and the framings there coincide with the direction from $0$ to $1$ of $p_0\times [0, 1]$.  
An embedding $c_i$ with such a framing is called a {\it ribbon}.
\end{enumerate}
An isotopy of $D_k \times [0,1]$ fixing $ D_k\times \partial[0,1]$ and sending $p_0\times [0,1]$ to itself is called a {\it bottom tangle isotopy},
and the bottom isotopy class of $T$  is called a {\it bottom tangle}.    
Let $\mathcal{T}_{k,n}$ be the free $\mathcal{R}$-module  spanned by bottom tangles in $D_k$ with $n$ ribbons.  
\end{definition}
We will often encode an element of $\mathcal{T}_{k,n}$ by a tangle diagram on $(0,1)\times [0,1)-\bigcup_{j=1}^k (\frac{j}{k+1},\frac{1}{2})$.
We call the the line $(0,1)\times \{0\}$ the bottom line and we require all the end points of the tangle appear on the bottom line in decreasing height as one travels
left to right along the bottom line. The framing is given by the blackboard framing.  
Sometimes, the diagram is drawn as a rectangle with  handles as  the rightmost diagram of Figure \ref{fig:freearcs}.  
\begin{figure}[htb]
\[
\begin{matrix}
\begin{matrix}
1 \\[30pt]
0
\end{matrix}
\begin{matrix}
\epsfig{file=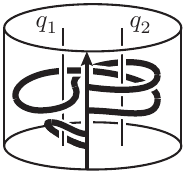, scale=0.8}
\end{matrix}
\\[-5pt]
\ \ p_0
\\[3pt]
D_2 \times[0,1]
\end{matrix}
\quad
\longrightarrow
\quad
\begin{matrix}
\\
\epsfig{file=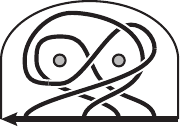, scale=0.8}
\\
1 \qquad\qquad\quad 0
\\[5pt]
\text{diagram on $D_2$}
\end{matrix}
\ \ \simeq \ \ 
\begin{matrix}
\epsfig{file=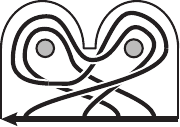, scale=0.8}
\\
1 \qquad\qquad\quad 0
\\[-10pt]{}\\
\end{matrix}
\]
\caption{Ribbons in
 $D_2\times[0,1]$.}
\label{fig:freearcs}
\end{figure}
\par
The tensor product structure $\otimes : \mathcal{T}_{k_1, n_1} \times \mathcal{T}_{k_2, n_2} \to \mathcal{T}_{k_1+k_2, n_1+n_2}$ is given by glueing two disks $D_{k_1}$ and $D_{k_2}$ as in Figure \ref{fig:glueing}.  
\begin{figure}[htb]
\[
\begin{matrix}
\text{$k_1$ punctures}
\\
\epsfig{file=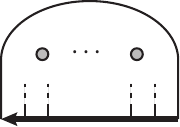, scale=0.8}
\\
\text{$n_1$ ribbons}
\end{matrix}
\ \ \otimes
\quad
\begin{matrix}
\text{$k_2$ punctures}
\\
\epsfig{file=freearcs3, scale=0.8}
\\
\text{$n_2$ ribbons}
\end{matrix}
\quad=\quad
\begin{matrix}
\text{$k_1+k_2$ punctures}
\\
\epsfig{file=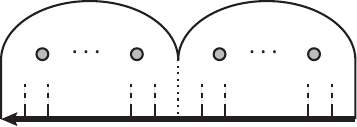, scale=0.8}
\\
\text{$n_1+n_2$ ribbons}
\end{matrix}
\]
\caption{The tensor product $\otimes : \mathcal{T}_{k_1, n_1} \times \mathcal{T}_{k_2, n_2} \to \mathcal{T}_{k_1+k_2, n_1+n_2}$.}
\label{fig:glueing}
\end{figure}
The multiplication 
$\boldsymbol{\mu} : \mathcal{T}_{k,n_1} \times \mathcal{T}_{k, n_2} \to \mathcal{T}_{k, n_1+n_2}$ 
is given by stacking $D_k\times [0, 1]$ as in Figure \ref{fig:multiplication}.  
%
\begin{figure}[htb]
\[
\begin{matrix}
\boldsymbol{\mu}\Big( &
\begin{matrix}
\epsfig{file=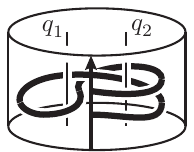, scale=0.8}
\end{matrix}
& , &
\begin{matrix}
\epsfig{file=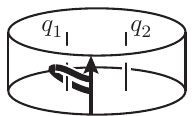, scale=0.8}
\end{matrix}
&
\Big)
&= &
\begin{matrix}
\epsfig{file=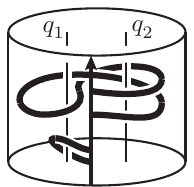, scale=0.8}
\end{matrix}
\\
& \downarrow & & \downarrow & & & \downarrow
\\[5pt]
\boldsymbol{\mu}\Big(
&\begin{matrix}
\epsfig{file=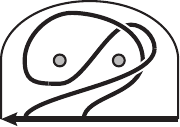, scale=0.8}
\end{matrix}
& , &
\begin{matrix}
\epsfig{file=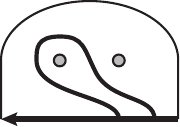, scale=0.8}
\end{matrix}
&\Big)
& = &
\begin{matrix}
\epsfig{file=freearcs2, scale=0.8}
\end{matrix}
\end{matrix}
\]
\caption{The multiplication 
$\boldsymbol{\mu} : \mathcal{T}_{k,n_1} \times \mathcal{T}_{k, n_2} \to \mathcal{T}_{k, n_1+n_2}$. }
\label{fig:multiplication}
\end{figure}
\par
For $T_1 \in \mathcal{T}_{k,l}$ and $T_2\in \mathcal{T}_{l,n}$, the composition $T_1\circ T_2 \in \mathcal{T}_{k, n}$ is defined by glueing the handles of $T_2$ to the ribbons of $T_1$ as in Figure \ref{fig:composition}.  
This composition gives an algebra structure in $\mathcal{T}_{k,k}$ and the action of $\mathcal{T}_{k,k}$ on $\mathcal{T}_{k, n}$ gives a $\mathcal{T}_{k,k}$ module structure on $\mathcal{T}_{k, n}$.  
\begin{figure}[htb]
\[
\begin{matrix}
\epsfig{file=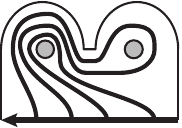, scale=0.8}
\\
T_1 \in \mathcal{T}_{2,2}
\end{matrix}
\ \circ \ 
\begin{matrix}
\epsfig{file=freearcs11, scale=0.8}
\\
T_2 \in \mathcal{T}_{2,2}
\end{matrix}
\ = \ 
\begin{matrix}
\epsfig{file=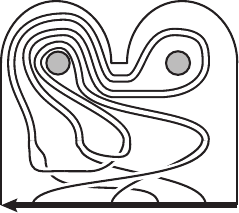, scale=0.8}
\\
T_1 \circ T_2
\end{matrix}
\]
\caption{The composition of a bottom tangle $T_1\in \mathcal{T}_{k,l}$ and an element $T_2 \in \mathcal{T}_{l,n}$ of the algebra of free ribbons in the case $k=n=l=2$.}
\label{fig:composition}
\end{figure}
\par
The bottom tangles form the following monoidal category $\mathcal{B}$.  
The set of objects of $\mathcal{B}$ is $\{0, 1, 2, \cdots\}$ and $\mathcal{T}_{k,n}$ is the set of morphisms from $n$ to $k$.  
The zero object is $0$ and the zero morphism is  the unique bottom tangle $\boldsymbol{1}$ in $\mathcal{T}_{0,0}$ whose diagram is a square without a ribbon.  
\subsection{Braided Hopf algebra structure of bottom tangles}
A braided Hopf algebra  structure is given to $\mathcal{B}$ as in Figure \ref{fig:Hopf}.  
\begin{figure}[htb]
\[
\begin{matrix}
\begin{matrix}
\epsfig{file=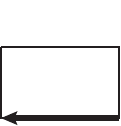, scale=0.8}
\\
\boldsymbol{1}
\\
\text{identity for}
\\
\text{tensor}
\end{matrix}
\qquad
\begin{matrix}
\epsfig{file=hopfid, scale=0.8}
\\
id
\\
\text{identity for}
\\
\text{composition}
\end{matrix}
\qquad
\begin{matrix}
\epsfig{file=hopfmu, scale=0.8}
\\
\mu
\\
\text{multiplication}
\\{}
\end{matrix}
\qquad
\begin{matrix}
\epsfig{file=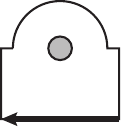, scale=0.8}
\\
\eta
\\
\text{unit}
\\{}
\end{matrix}
\qquad
\begin{matrix}
\epsfig{file=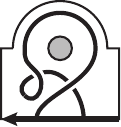, scale=0.8}
\\
S
\\
\text{antipode}
\\{}
\end{matrix}
\\{}\\
\begin{matrix}
\epsfig{file=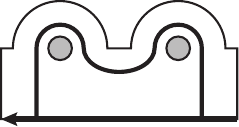, scale=0.8}
\\
\Delta
\\
\text{coproduct}
\end{matrix}
\qquad
\begin{matrix}
\\{}\\[-10pt]
\epsfig{file=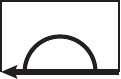, scale=0.8}
\\
\varepsilon
\\
\text{counit}
\end{matrix}
\qquad
\begin{matrix}
\epsfig{file=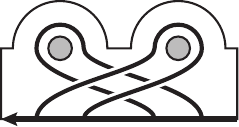, scale=0.8}
\\
\Psi
\\
\text{braiding}
\end{matrix}
\end{matrix}
\]
\caption{Braided Hopf algebra  structure of bottom tangles.}
\label{fig:Hopf}
\end{figure}
In general, we define
\begin{align*}
\mu_i &= id^{\otimes(i-1)} \otimes \mu \otimes id^{\otimes(n-i-1)},
&\Delta_i = id^{\otimes(i-1)} \otimes \Delta \otimes id^{\otimes(n-i)},
\\
\eta_i &= id^{\otimes(i-1)} \otimes \eta \otimes id^{\otimes(n-i)},
&\varepsilon_i = id^{\otimes(i-1)} \otimes \varepsilon \otimes id^{\otimes(n-i)},\quad
\\
S_i &= id^{\otimes(i-1)} \otimes S_i \otimes id^{\otimes(n-i)},
&\Psi_i = id^{\otimes(i-1)} \otimes \Psi \otimes id^{\otimes(n-i-1)}.
\end{align*}
By using the braiding, we define the braided multiplication 
$\boldsymbol{\mu} \in \mathcal{T}_{k, 2k}$ 
by
\begin{equation}
\boldsymbol{\mu}
=
(\underset{k}{\underbrace{\mu \otimes \cdots \otimes \mu}}) \circ \Psi_{2k-2} \circ (\Psi_{2k-4}\circ \Psi_{2k-3}) \circ \cdots \circ (\Psi_4 \circ \Psi_5 \circ \cdots \circ \Psi_{k+1}) \circ (\Psi_2 \circ \Psi_3 \circ \cdots \circ \Psi_{k}).  
\label{eq:braidedmultiplication}
\end{equation}
This $\boldsymbol{\mu}$ gives a multiplication from $\mathcal{T}_{k, n_1} \otimes \mathcal{T}_{k, n_2}$ to $\mathcal{T}_{k, n_1+n_2}$  by $\boldsymbol{\mu}\circ(T_1 \otimes T_2)$ where $T_1\in \mathcal{T}_{k,n_1}$ and $T_2 \in \mathcal{T}_{k, n_2}$.
The mapping $\boldsymbol{\mu}$ agrees with
the previous multiplication given by stacking two bottom tangles discussed in the previous subsection.  
With respect to the multiplication $\boldsymbol{\mu}$, 
\[
\mathcal{T}_k = \bigoplus_{j=0, 1, 2, \cdots} \mathcal{T}_{k,j}
\]
 has a graded algebra structure.  
From the stacking interpretation we see that $\boldsymbol{\mu}$ is associative and we write \[
\boldsymbol{\mu} = 
\boldsymbol{\mu}\circ (\boldsymbol{\mu}\otimes id) \circ \cdots \circ(\boldsymbol{\mu}\otimes id^{\otimes(m-2)})
: \mathcal{T}_{k, n_1} \otimes \cdots\otimes \mathcal{T}_{k, n_l} \to \mathcal{T}_{k, n_1+ \cdots + n_l}.
\]
 \par
 Let 
\[
\alpha_i = \eta^{\otimes(i-1)}\otimes id \otimes \eta^{\otimes(k-i)}
\in \mathcal{T}_{k,1},  
\]
Then the braiding $\Psi$ is expressed by $\Psi = \boldsymbol{\mu}(\alpha_2 \otimes \alpha_1)$ for $\alpha_1, \alpha_2 \in \mathcal{T}_{2, 1}$.  
\begin{figure}[htb]
\[
\alpha_i \ = \ 
\begin{matrix}
\epsfig{file=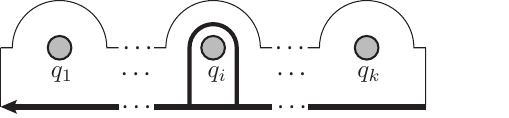, scale=0.8}
\end{matrix}
\]
\caption{$\alpha_i = \eta^{\otimes(i-1)}\otimes id \otimes \eta^{\otimes(k-i)}
\in \mathcal{T}_{k,1}$}
\label{fig:alpha}
\end{figure}
 \par
We also define another multiplication $m : \mathcal{T}_{k, 1}\otimes \mathcal{T}_{k, 1}\to \mathcal{T}_{k, 1}$ by 
\[
m(T_1 \otimes T_2) = 
\boldsymbol{\mu} \circ (T_1 \otimes T_2)\circ\Delta \quad
\text{for $T_1$, $T_2 \in  \mathcal{T}_{k, 1}$,}
\]
and use the shorthand $T_1  T_2$ to mean $m(T_1 \otimes T_2)$. This product is associative because $\boldsymbol{\mu}$ is. 
By this new product, we have $S(\alpha_i) \alpha_i = \alpha_i S(\alpha_i) = \epsilon\, ( = \epsilon\otimes \eta^m)$, so we denote $S(\alpha_i)$ by $\alpha_i^{-1}$.  
\begin{figure}[htb]
\[
\begin{matrix}
\Big(
&\begin{matrix}
\epsfig{file=freearcs9, scale=0.8}
\end{matrix}
& , &
\begin{matrix}
\epsfig{file=freearcs8, scale=0.8}
\end{matrix}
&\Big)
& = &
\begin{matrix}
\epsfig{file=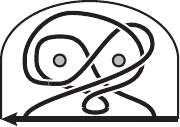, scale=0.8}
\end{matrix}
\end{matrix}
\]
\caption{The multiplication $m$ of $\mathcal{T}_{k, 1}$.}
\label{fig:multiplication2}
\end{figure}
\subsection{Coadjoint action}
Here we introduce the coadjoint action to bottom tangles.  
\begin{definition}
The {\it coadjoint action} $\ad$ is defined by
\[
\ad = \mu_2\circ \Psi_1\circ(S \otimes \Delta)\circ \Delta
\in \mathcal{T}_{2, 1}.
\]
\end{definition}
\begin{figure}[htb]
\[
\ad \ = \ 
\begin{matrix}
\epsfig{file=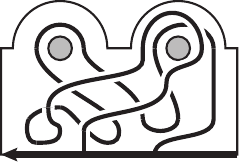, scale=0.8}
\end{matrix}
\ =\ 
\begin{matrix}
\epsfig{file=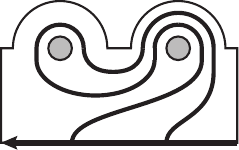, scale=0.8}
\end{matrix}
\]
\caption{The coadjoint action $\ad$.}
\label{fig:coadjoint}
\end{figure}
The leftmost picture of Figure \ref{fig:coadjoint}  means that 
\[
\ad = \alpha_2^{-1} \alpha_1^{} \alpha_2^{} \in \mathcal{T}_{2,1}.
\]
\begin{prop}
The braided Hopf algebra structure of the actions of the bottom tangles to the algebra of free arcs satisfies the following braided commutativity.
\begin{equation}
\mu_2\circ(\ad \otimes id)
=
\mu_2\circ\Psi_1\circ(id \otimes \ad)\circ\Psi
\in \mathcal{T}_{2, 2}.  
\label{eq:commutativity}
\end{equation}
\end{prop}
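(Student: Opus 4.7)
The plan is to give a diagrammatic proof, viewing both sides of the equation as bottom tangles in $\mathcal{T}_{2,2}$ and showing they are isotopic in $D_2\times[0,1]$.

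Using the identification $\ad=\alpha_2^{-1}\alpha_1^{}\alpha_2^{}\in\mathcal{T}_{2,1}$ from the leftmost picture of Figure~\ref{fig:coadjoint}, I rewrite both sides in the $\alpha$-notation. The LHS $\mu_2\circ(\ad\otimes id)$ is the bottom tangle obtained by conjugating the first input ribbon by an $\alpha_2$-loop (so that it now traces the path $\alpha_2^{-1}\alpha_1\alpha_2$), leaving the second input ribbon untouched, and then merging the second and third strands via $\mu_2$ using the braided multiplication $\boldsymbol{\mu}$. The RHS $\mu_2\circ\Psi_1\circ(id\otimes\ad)\circ\Psi$ performs the analogous construction after a preliminary braiding: $\Psi$ permutes the two inputs, $\ad$ conjugates what is now in position~$2$, $\Psi_1$ unbraids the upper crossing, and $\mu_2$ merges the last two strands.

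The geometric content is that one may slide the conjugating $\alpha_2^{\pm1}$-loops of the LHS past the second input strand. This slide is an isotopy of $D_2\times[0,1]$ relative to the boundary, and each time a loop is slid past the second strand it introduces exactly two crossings, one at the ``top'' of the loop and one at its ``bottom''. The top crossings assemble into the outer $\Psi$ of the RHS, while the bottom crossings assemble into the interior $\Psi_1$. Thus sliding the conjugation across the second ribbon converts the LHS tangle into the RHS tangle, proving the braided commutativity.

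The main technical obstacle is careful bookkeeping of framings and of the over/under sense of the strands during the slide, to ensure the two bottom tangles are literally isotopic rather than merely homotopic. As a back-up, a purely algebraic proof is available: expand $\ad=\mu_2\circ\Psi_1\circ(S\otimes\Delta)\circ\Delta$ on both sides, and then reduce to a common normal form using coassociativity of $\Delta$, associativity of $\mu$, naturality of $\Psi$, and the braided bialgebra identity $\Delta\circ\mu=(\mu\otimes\mu)\circ(id\otimes\Psi\otimes id)\circ(\Delta\otimes\Delta)$; the antipode axiom enters only implicitly through the definition of $\ad$.
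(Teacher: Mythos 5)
Your proposal is correct and follows essentially the same route as the paper: the paper's proof is precisely a diagrammatic isotopy of the two bottom tangles in $D_2\times[0,1]$, deforming both sides of \eqref{eq:commutativity} into a common diagram (Figure \ref{fig:commutativity}), which is the slide of the conjugating loop of $\ad$ past the other ribbon that you describe. The algebraic back-up via the braided bialgebra axioms is not needed here, since in the bottom-tangle category the identity is visible directly as an isotopy.
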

\begin{proof}
This is proved by deforming the diagrams corresponding to the left hand side and the right hand side of \eqref{eq:commutativity}.
Both diagrams are deformed into the same diagram as in Figure \ref{fig:commutativity}.
\end{proof}
\begin{figure}[htb]
\begin{align*}
\mu_2\circ(\ad \otimes id) \ &= \ \ 
\begin{matrix}
\epsfig{file=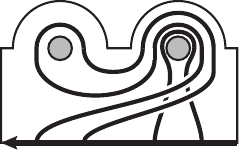, scale=0.8}
\end{matrix}\ ,
\\
\mu_2\circ\Psi_1\circ(id \otimes \ad)\circ\Psi
\ &=\ 
\begin{matrix}
\epsfig{file=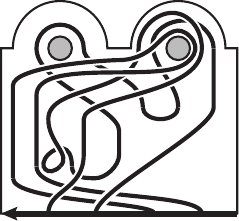, scale=0.8}
\end{matrix}
\ =\ 
\begin{matrix}
\epsfig{file=hopfcocommutative1, scale=0.8}
\end{matrix}\ .
\end{align*}
\caption{Proof of the braided commutativity.}
\label{fig:commutativity}
\end{figure}
\begin{definition}
Let $\Ad$ be the element in $\mathcal{T}_{k+1, k}$ given by 
\[
\Ad = (id^{\otimes k}\otimes \boldsymbol{\mu})\circ  (\Psi_k\Psi_{k-1}\cdots\Psi_2\ad_1) \circ
(\Psi_{k}\Psi_{k-1}\cdots\Psi_3\ad_2) \circ\cdots\circ
(\Psi_{k}\ad_{k-1})\circ \ad_k
\]
where $ad_i = id^{\otimes (i-1)}\otimes \ad \otimes id^{\otimes (k-i)}$.  
\end{definition}
The bottom tangle $\Ad$ is given by a crossingless bottom tangle as in Figure  \ref{fig:Ad}.  
\begin{figure}[htb]
\[
\begin{matrix}
(id^{\otimes k}\otimes \boldsymbol{\mu}) \circ
\begin{matrix}
\epsfig{file=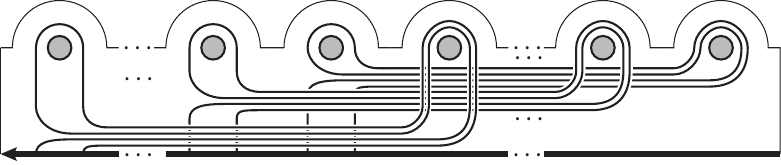, scale=0.8}
\\
(\Psi_k\circ\Psi_{k-1}\cdots\Psi_2\circ\ad_1)\cdots(\Psi_k\circ\ad_{k-1})\ad_k
\end{matrix}
\\
=\  \begin{matrix}
\epsfig{file=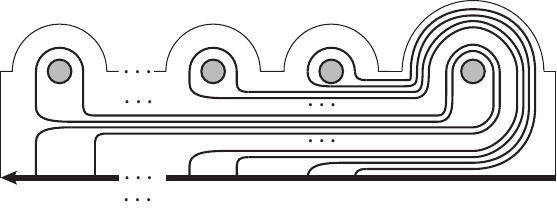, scale=0.8}
\\
\Ad
\end{matrix}
\end{matrix}
\]
\caption{The adjoint action $\Ad$.}
\label{fig:Ad}
\end{figure}
\subsection{Flat bottom tangles}
\begin{definition}
A tangle $T \in \mathcal{T}_{k,n}$ is called a {\it flat bottom tangle} if $T$ has a projection without crossings.  
Let $\mathcal{T}^F_{k, n}$ be the subspace  of $\mathcal{T}_{k, n}$ spanned by all the flat bottom tangles in $\mathcal{T}_{k, n}$.  
\end{definition}
\begin{prop}
The composition of two flat bottom tangles is  a flat bottom tangle.  
So the flat bottom tangles form a subcategory $\mathcal{B}^F$ of $\mathcal{B}$.  
\end{prop}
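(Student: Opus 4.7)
The plan is to produce a crossingless projection of $T_1 \circ T_2$ by combining crossingless projections of the two factors. Fix $T_1 \in \mathcal{T}^F_{k,l}$ and $T_2 \in \mathcal{T}^F_{l,n}$, and choose crossingless projections $P_1 \subset D_k$ (for $T_1$) and $P_2 \subset D_l$ (for $T_2$). The first step is to observe that the $l$ framed arcs of $P_1$ are pairwise disjoint and embedded in $D_k$, so they admit a small regular neighborhood $N \subset D_k$ which is a planar subsurface. Combining this embedding with the framing data furnishes a canonical identification of the pair (ambient surface, cores of the ribbons of $T_1$ inside $N$) with the pair $(D_l, \text{punctures})$: the $i$-th ribbon of $T_1$ corresponds to the $i$-th puncture of $D_l$, and a handle encircling the puncture corresponds to a loop in $N$ encircling the ribbon.

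The second step is to transport the crossingless diagram $P_2$ through the diffeomorphism $N \cong D_l$ and place its image inside $N \subset D_k$. Because $P_2$ consists of $n$ pairwise disjoint framed arcs in $D_l$, their images under the diffeomorphism form $n$ pairwise disjoint framed arcs inside $N \subset D_k$. Since the arcs lie entirely inside $N$, they do not meet the other arcs of $P_1$ (which form the cores), and the combined picture is a crossingless diagram of $n$ framed arcs in $D_k$.

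The final step is to verify that this combined diagram actually represents $T_1 \circ T_2$ up to bottom-tangle isotopy. The composition, as defined earlier in this section, glues the $l$ handles of $T_2$, each encircling a puncture of $D_l$, to the $l$ ribbons of $T_1$; the regular-neighborhood construction $N \cong D_l$ realizes exactly this gluing at the level of diagrams. Therefore $T_1 \circ T_2$ admits a crossingless projection in $D_k$ and is flat, which immediately yields the subcategory claim since the identity element of $\mathcal{B}$ on each object is visibly flat.

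The main obstacle I expect is making the identification $N \cong D_l$ sufficiently precise to show the constructed diagram genuinely represents $T_1 \circ T_2$ in $\mathcal{T}_{k,n}$. This requires tracking the framings along the ribbons of $T_1$ when thickening to $N$, and verifying that the endpoints of the ribbons of $P_2$, once transported into $D_k$, appear on the bottom line $p_0 \times [0,1]$ in the strictly decreasing height order demanded by the bottom-tangle conditions. These are routine topological checks but form the technical heart of the argument; apart from them, flatness of the composite is manifest from the flatness of $P_1$ and $P_2$.
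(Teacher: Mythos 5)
Your argument is correct. Note, however, that the paper offers no proof of this proposition at all: it is stated as an immediate consequence of the diagrammatic definition of composition (gluing the handles of $T_2$ onto the ribbons of $T_1$), so there is no argument in the text to compare yours against. What you have written is the natural way to make that evident picture rigorous --- taking a planar regular neighborhood $N$ of the $l$ crossingless ribbons of $P_1$ together with a collar of the base segment, identifying $N$ with $D_l$, and transporting $P_2$ into $N$ is exactly a diagrammatic realization of the gluing that defines $\circ$, and since everything stays inside the planar subsurface $N$ no crossings can be created. The caveats you flag (matching the $i$-th hole of $N$ with the $i$-th puncture of $D_l$, keeping the blackboard framing under the cabling, and re-ordering the endpoint heights on $p_0\times[0,1]$) are indeed the only points needing care, and all are routine for flat tangles because the framing coming from a crossingless diagram is untwisted.
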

\begin{prop}
Any element $F$ of $\mathcal{T}^F_{k,n}$ commutes with the multiplication $\boldsymbol{\mu} : \mathcal{T}_{n, l_1} \otimes \mathcal{T}_{n, l_2} \to \mathcal{T}_{n, l_1+l_2}$,  i.e.
\[
F\circ \boldsymbol{\mu}(T_1 \otimes T_2)
=
\boldsymbol{\mu}\left((F\circ T_1)\otimes(F\circ T_2)\right) \quad
\text{for \  $T_1 \in  \mathcal{T}_{n, l_1}$ and \ $T_2 \in  \mathcal{T}_{n, l_2}$}.
\] 
\label{prop:flat}
\end{prop}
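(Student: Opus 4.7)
The plan is to give a direct diagrammatic isotopy argument based on the crossingless nature of $F$. First, I would unpack the definition \eqref{eq:braidedmultiplication} of $\boldsymbol{\mu}$ and draw the diagram of $F \circ \boldsymbol{\mu}(T_1 \otimes T_2)$: the $n$ ribbons of $F$ are glued on top of the stack built by placing $T_1$ over $T_2$ and interleaving the strands via the $\Psi_j$'s before pairwise multiplying by the $\mu_i$'s.

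The key step is then to slide $F$ downward through this braided product. Since each ribbon of $F$ is a simple crossingless arc, it can be thickened and split into two parallel copies that independently attach, one to the handle coming from $T_1$ and one to the handle coming from $T_2$. The braidings $\Psi_j$ that $F$ meets during the descent can be pushed past to the outer layer because the doubled ribbons of $F$ are themselves unlinked, so they slide past each $\Psi_j$ without creating new crossings; this is exactly where the hypothesis that $F$ is flat is used. After the isotopy, the diagram shows two copies of $F$, one glued to $T_1$ and the other to $T_2$, interleaved and combined by the same braided product $\boldsymbol{\mu}$ applied at the outer level, which is precisely $\boldsymbol{\mu}\bigl((F\circ T_1)\otimes (F\circ T_2)\bigr)$.

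The main obstacle will be making the ``slide and double'' operation rigorous. I would handle it by induction on the number $n$ of ribbons of $F$. The base case $n=1$ reduces to the statement that a single flat ribbon sitting above $\mu$ splits naturally into both factors of the product, which is a small planar isotopy. For the inductive step, use the associativity and explicit recursive form of $\boldsymbol{\mu}$ in \eqref{eq:braidedmultiplication} to peel off one ribbon at a time, applying the inductive hypothesis to the smaller flat tangle after each peel. An alternative approach would be to present the flat subcategory $\mathcal{B}^F$ by its crossingless generators (the elementary pieces $\eta,\varepsilon,\Delta,\mu$, and $\alpha_i$, used without any $\Psi$), verify the identity separately on each generator, and then extend by functoriality of tensor and composition.
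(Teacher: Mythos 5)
Your argument is essentially the paper's own proof: the paper also proceeds by a diagrammatic isotopy, observing that in $F\circ\boldsymbol{\mu}(T_1\otimes T_2)$ the arcs coming from $T_1$ and those coming from $T_2$ can be pushed into disjoint height levels precisely because the ribbons of $F$ have no crossings, which turns the picture into $\boldsymbol{\mu}\left((F\circ T_1)\otimes(F\circ T_2)\right)$ (Figure \ref{fig:commutativityFM}). Your ``double $F$ into two unlinked parallel copies'' is the same isotopy described from the other side, and your proposed scaffolding (induction on $n$, or verification on crossingless generators) only adds rigor beyond what the paper records.
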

\begin{proof}
The ribbons  of $F\circ \mu(T_1 \otimes T_2)$ are split into the arcs from $T_1$ and those from $T_2$ as in Figure \ref{fig:commutativityFM}.  
Since $F$ has no self-intersection, the arcs from $T_1$ and those from $T_2$ can be separated into different levels.  
So it is equal to $\mu\left((F\circ T_1)\otimes(F\circ T_2)\right)$.  
\end{proof}
\begin{figure}[htb]
\begin{multline*}
\begin{matrix}
\epsfig{file=freearcs12, scale=0.8}
\\F
\end{matrix}
\ \circ\ 
\boldsymbol{\mu}\left(
\begin{matrix}
\epsfig{file=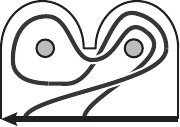, scale=0.8}
\\
T_1
\end{matrix}
\otimes\begin{matrix}
\epsfig{file=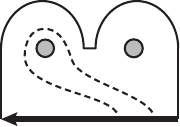, scale=0.8}
\\T_2
\end{matrix}
\right)
\\ =\
\begin{matrix}
\epsfig{file=freearcs12, scale=0.8}
\end{matrix}
\ \circ\ 
\begin{matrix}
\epsfig{file=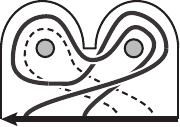, scale=0.8}
\end{matrix}
\\ =\ 
\begin{matrix}
\epsfig{file=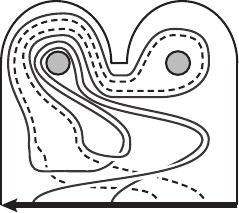, scale=0.7}
\end{matrix}
\ 
\underset{*}{=} \ 
\boldsymbol{\mu}\left(
\begin{matrix}
\epsfig{file=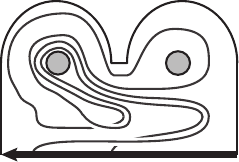, scale=0.7}
\end{matrix}\otimes
\begin{matrix}
\epsfig{file=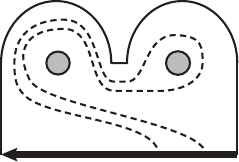, scale=0.7}
\end{matrix}
\right)
\\
 =\ 
\boldsymbol{\mu}\left(
\left(
\begin{matrix}
\epsfig{file=freearcs12, scale=0.7}
\end{matrix}
\ \circ \ 
\begin{matrix}
\epsfig{file=freearcs181, scale=0.7}
\end{matrix}\right)
\ \otimes\ 
\left(
\begin{matrix}
\epsfig{file=freearcs12, scale=0.7}
\end{matrix}
\ \circ \ 
\begin{matrix}
\epsfig{file=freearcs171, scale=0.7}
\end{matrix}
\right)
\right)
\end{multline*}
\caption{Commutativity of elements of $\mathcal{T}^F_{k,n}$ and $\boldsymbol{\mu}$. 
The flatness of $F$ is used at the equality marked by $*$.}
\label{fig:commutativityFM}
\end{figure}
\begin{prop}
The bottom tangle $\Ad$ commutes with any bottom tangle $T \in \mathcal{T}_{k,n}$, i.e.
\begin{equation}
\Ad \circ T = (T \otimes id) \circ \Ad.
\label{eq:Ad}
\end{equation}  
\label{prop:Ad}
\end{prop}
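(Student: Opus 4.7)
The plan is to establish the identity by a direct topological isotopy, exploiting crucially the flatness of $\Ad$. Recall from Figure~\ref{fig:Ad} that $\Ad$ is a crossingless bottom tangle in $D_{k+1}$ whose $k$ ribbons each encircle a single ``conjugator'' strand connecting the $(k+1)$-st handle to the bottom line. In the composition $\Ad\circ T$ the $k$ handles of $T$ are glued to the $k$ ribbons of $\Ad$, so the resulting diagram is simply $T$ with one additional strand on its right that loops around each handle of $T$. Because $\Ad$ is crossingless, this conjugator strand meets no other arc of $\Ad$, and so interacts with $T$ only through these loops.

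The core step is then to slide this conjugator strand downwards past the whole of $T$. Since no crossing of $T$ needs to be resolved and the strand is free to pass behind each arc of $T$, an explicit isotopy carries it from above $T$ to beneath $T$, where it now loops around each of the $n$ ribbons of $T$. This final configuration is precisely $(T\otimes id)\circ \Ad$, interpreting the right-hand $\Ad$ as lying in $\mathcal{T}_{n+1,n}$ and the $id$ factor as running along the conjugator strand.

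To realize this isotopy rigorously inside the algebraic framework of $\mathcal{B}$, I would induct on the expression of $T$ as a composition and tensor product of the basic generators of Figure~\ref{fig:Hopf}. The composition case is immediate by two applications of \eqref{eq:Ad} in succession. On each elementary generator the check is direct: the case $T=\mu$ is precisely the braided commutativity of $\ad$ from equation~\eqref{eq:commutativity}, extended to $\Ad$ via its recursive definition, while the cases $\Delta$, $\eta$, $\varepsilon$, $S$, $\Psi$, and $\alpha_i$ reduce to the standard braided Hopf algebra axioms.

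The main obstacle I anticipate is the tensor-product step $T=T_1\otimes T_2$, since the single conjugator strand of $\Ad$ must be split consistently between the two factors $T_1$ and $T_2$. This is exactly the setting in which Proposition~\ref{prop:flat} applies: the flatness of $\Ad$ allows the conjugator strand to be distributed between the factors compatibly with the braiding, reducing the compound case to the already-handled single-factor identities.
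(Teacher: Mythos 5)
Your first two paragraphs are exactly the paper's argument: the paper proves Proposition~\ref{prop:Ad} by the single isotopy of Figure~\ref{fig:Adcommute}, sliding the encircling conjugator strand of $\Ad$ from the handles of $T$ down to its ribbons (the correct mechanism being that the loop encircles the entire bunch of strands and slides along it, rather than ``passing behind'' individual arcs). The additional inductive scaffolding over generators in your last two paragraphs is not needed and is not what the paper does, but the core proof matches.
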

\begin{proof}
$\Ad$ commutes with $T$ as in Figure \ref{fig:Adcommute}.  
\end{proof}
\begin{figure}[htb]
\[
\begin{matrix}
\epsfig{file=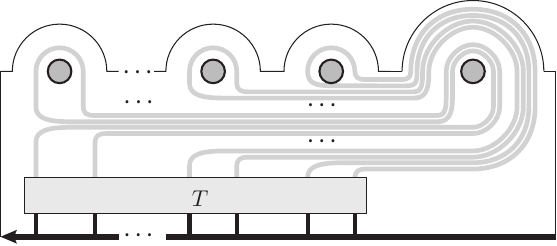, scale=0.7}
\end{matrix}
\ = \ 
\begin{matrix}
\epsfig{file=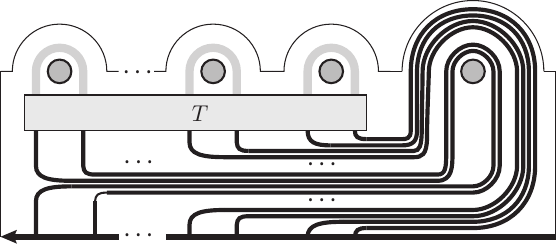, scale=0.7}
\end{matrix}
\]
\caption{The adjoint coaction $\Ad$ commutes with $T \in \mathcal{T}_{k,n}$. 
The gray thick lines represent bunches of strings.  }
\label{fig:Adcommute}
\end{figure}
\subsection{Braid group action}
\label{subsection:braid}
The braid group $B_k$ of $k$ strings acts on the $k$ punctured disk where the generator $\sigma_i$ acts by the counter-clockwise half twist to swap the $i$-th and $(i+1)$-th punctures.  
Extending this action to $D_k\times [0,1]$ the action of 
the generator $\sigma_i$ and its inverse $\sigma_i^{-1}$ is given by composing with the bottom tangles $id^{\otimes(i-1)} \otimes T_\sigma \otimes id^{\otimes(n-i-1)}$ and $id^{\otimes(i-1)} \otimes T_{\sigma^{-1}} \otimes id^{\otimes(n-i-1)}$. Here $T_\sigma$ and $T_{\sigma^{-1}}$ are 
bottom tangles in $\mathcal{T}_{2,2}$ given in Figure \ref{fig:sigmaaction}.  
These elements are expressed as follows.
\begin{align*}
T_\sigma &=\mu_2\circ\Psi_1\circ(id \otimes \ad),
\\
T_{\sigma^{-1}}& = 
\mu_1 \circ \Psi_2^{-1}\circ\Psi_1^{-1}\circ\Psi_2^{-1}\circ S_2^{-1}\circ(\ad \otimes id).
\end{align*}
Here $S^{-1}$ is the inverse of $S$ with respect to the composition, which is given by the bottom tangle in Figure \ref{fig:Sinverse}.  
\begin{figure}[htb]
\[
\begin{matrix}
\epsfig{file=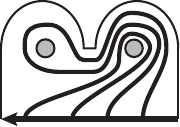, scale=0.8}
\\
T_\sigma
\end{matrix}
\qquad\qquad
\begin{matrix}
\epsfig{file=freearcs12, scale=0.8}
\\
T_{\sigma^{-1}}
\end{matrix}
\]
\caption{The bottom tangles in $\mathcal{T}_{2,2}$ corresponding to $\sigma$ and $\sigma^{-1}$.}
\label{fig:sigmaaction}
\end{figure}
\begin{figure}[htb]
\begin{center}
$S^{-1} \ \ : \ \ 
\begin{matrix}
\epsfig{file=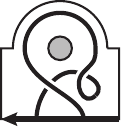, scale=0.8}
\end{matrix}$
\end{center}
\caption{The bottom tangle $S^{-1}$.}
\label{fig:Sinverse}
\end{figure}
Any element in the braid group may be represented by a composition of $T_\sigma$ and $T_{\sigma^{-1}}$. These are both flat bottom tangles so we have the following.  
\begin{prop}
Any element $b$ of the braid group $B_k$ is represented by a flat bottom tangle $T_b$.
\label{prop:Bn}
\end{prop}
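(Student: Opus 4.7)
The plan is to argue by induction on the length of a braid word, using the already-established fact that the flat bottom tangles form a subcategory $\mathcal{B}^F$ of $\mathcal{B}$. The proposition reduces to two points: the generators $\sigma_i^{\pm 1}$ are represented by flat bottom tangles, and flatness is preserved under all the operations used to build $T_b$ from these generators.

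First I would treat the base case, namely $i=1$, $k=2$. I would observe directly from Figure \ref{fig:sigmaaction} that the displayed diagrams for $T_\sigma$ and $T_{\sigma^{-1}}$ in $\mathcal{T}_{2,2}$ admit planar projections with no crossings: the two ribbons simply encircle the punctures of $D_2$ in the way prescribed by a half twist and can be drawn side by side. Hence $T_\sigma, T_{\sigma^{-1}} \in \mathcal{T}^F_{2,2}$. The identity tangle $id \in \mathcal{T}_{1,1}$ is trivially flat.

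Next I would lift this to a generator $\sigma_i \in B_k$ by noting that its action on $D_k \times [0,1]$ is by composition with $id^{\otimes(i-1)}\otimes T_\sigma \otimes id^{\otimes(k-i-1)}$. Since the tensor product is defined by gluing disks side by side (Figure \ref{fig:glueing}), tensoring flat tangles produces a flat tangle, as the disjoint union of projections without crossings is again a projection without crossings. Thus each generator $\sigma_i^{\pm 1}$ is represented by a flat tangle $T_{\sigma_i^{\pm 1}} \in \mathcal{T}^F_{k,k}$.

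Finally, given an arbitrary $b = \sigma_{i_1}^{\epsilon_1} \sigma_{i_2}^{\epsilon_2} \cdots \sigma_{i_m}^{\epsilon_m}$, define
\[
T_b = T_{\sigma_{i_1}^{\epsilon_1}} \circ T_{\sigma_{i_2}^{\epsilon_2}} \circ \cdots \circ T_{\sigma_{i_m}^{\epsilon_m}}.
\]
By the proposition that the composition of flat bottom tangles is flat, $T_b \in \mathcal{T}^F_{k,k}$, and by construction its action on $D_k \times [0,1]$ coincides with the braid action of $b$. I do not expect any serious obstacle here; the only subtle point is checking that the pictures in Figure \ref{fig:sigmaaction} really are crossingless projections rather than just having few crossings, but this can be verified by isotopy directly from the drawn diagrams.
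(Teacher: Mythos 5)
Your argument is exactly the paper's: the generators $\sigma_i^{\pm1}$ act by the flat tangles $id^{\otimes(i-1)}\otimes T_{\sigma^{\pm1}}\otimes id^{\otimes(k-i-1)}$ from Figure \ref{fig:sigmaaction}, and an arbitrary braid word is handled by composing these and invoking the fact that flat bottom tangles form a subcategory. The paper states this in one line before the proposition; your version just makes the tensor-product and induction steps explicit, which is fine.
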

%
%
%
Proposition  \ref{prop:flat} and Proposition \ref{prop:Bn} imply the following.  
\begin{prop}
Any element of $B_k$ acts on $\mathcal{T}_{k}$ as an algebra automorphism.  
\label{prop:algaut}
\end{prop}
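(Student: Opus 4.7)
The plan is to combine Propositions \ref{prop:flat} and \ref{prop:Bn}. By Proposition \ref{prop:Bn}, any $b \in B_k$ is represented by a flat bottom tangle $T_b \in \mathcal{T}^F_{k,k}$, and the action of $b$ on $\mathcal{T}_k = \bigoplus_{n} \mathcal{T}_{k,n}$ is $T \mapsto T_b \circ T$. I will show that this map preserves the graded algebra structure $(\mathcal{T}_k, \boldsymbol{\mu})$ and is invertible.

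First I would establish multiplicativity. Since $T_b$ is flat, Proposition \ref{prop:flat} applied with $F = T_b$ yields immediately
\[
T_b \circ \boldsymbol{\mu}(T_1 \otimes T_2) = \boldsymbol{\mu}\bigl((T_b \circ T_1) \otimes (T_b \circ T_2)\bigr)
\qquad (T_1 \in \mathcal{T}_{k, l_1},\ T_2 \in \mathcal{T}_{k, l_2}),
\]
so the $b$-action is a homomorphism for $\boldsymbol{\mu}$. Next, the unit of $(\mathcal{T}_k, \boldsymbol{\mu})$ lies in the degree-zero part $\mathcal{T}_{k,0}$ (the empty tangle on $D_k$), and composing with $T_b$ stacks a ribbon-free disk on top of $T_b$; the result has zero ribbons and is again the empty tangle, so the unit is preserved.

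For bijectivity, I would use that $b^{-1} \in B_k$ is also represented by a flat bottom tangle $T_{b^{-1}}$ by Proposition \ref{prop:Bn}, so $b^{-1}$ acts by $T \mapsto T_{b^{-1}} \circ T$. The composition $T_{b^{-1}} \circ T_b$ is isotopic, as a bottom tangle in $\mathcal{T}_{k,k}$, to $id^{\otimes k}$, because stacking inverse braids is isotopically trivial. By associativity of composition, $T_{b^{-1}} \circ (T_b \circ T) = T$ for every $T \in \mathcal{T}_{k,n}$, and symmetrically $T_b \circ (T_{b^{-1}} \circ T) = T$; this shows that the $b$- and $b^{-1}$-actions are mutually inverse linear maps, so $b$ acts as an algebra automorphism of $\mathcal{T}_k$.

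The main obstacle is Step 3: one must confirm that $b \mapsto T_b$ descends to a well-defined map on $B_k$, i.e.\ that Artin's braid relations $\sigma_i\sigma_{i+1}\sigma_i = \sigma_{i+1}\sigma_i\sigma_{i+1}$ and $\sigma_i\sigma_j = \sigma_j\sigma_i$ for $|i-j|\ge 2$, together with $T_\sigma \circ T_{\sigma^{-1}} = id \otimes id$, hold at the level of bottom tangle isotopy rather than merely at the algebraic-word level. This is expected from the classical interpretation of $B_k$ as isotopy classes of geometric braids, but formally requires a verification by local diagrammatic moves using the explicit expressions for $T_\sigma$ and $T_{\sigma^{-1}}$ given in Section \ref{subsection:braid}.
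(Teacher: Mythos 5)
Your proof is correct and follows the paper's own route exactly: the paper proves this proposition simply by citing Proposition \ref{prop:flat} (flat tangles commute with $\boldsymbol{\mu}$) together with Proposition \ref{prop:Bn} (every braid is represented by a flat bottom tangle), and you have merely filled in the details of multiplicativity, unit preservation, and invertibility via $T_{b^{-1}}$. The well-definedness issue you flag at the end is resolved by the fact that the action is defined geometrically, by extending the half-twist homeomorphisms of $D_k$ to $D_k\times[0,1]$, so the braid relations and $T_{\sigma}\circ T_{\sigma^{-1}}=id^{\otimes 2}$ hold automatically at the level of bottom tangle isotopy.
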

As mentioned in the introduction, $\mathcal{T}_{k}$ becomes the group algebra of the free group $\pi_1(D_k,p)$ when we ignore the crossings of the bottom tangles.
As such the above construction generalizes the Artin representation of the braid group as automorphisms of the free group.

Our convention for the order of multiplication in $B_n$ is that $a b$ is the braid $a$ stacked on top of braid $b$, also compare the braid in Figure \ref{fig:rotation}.
In terms of Hopf diagrams the fact that $T_b$ acts as an algebra automorphism becomes
\[T_b \, \boldsymbol{\mu}^{}(\boldsymbol x \otimes \boldsymbol y)
=
\boldsymbol{\mu}^{}\big(T_b \, \boldsymbol x\otimes T_b \, \boldsymbol y\big).
\]

\section{A quantum analogue of a knot group}
\subsection{Plat presentation}
For any knot $K$, there is $b \in B_{2k}$ such that $K$ is isotopic to the plat closure $\widehat b$ of $b$ as in Figure \ref{fig:plat}.  
\begin{prop}[Cf. \cite{CG}, Theorem 2]
Plat closures of two braids $b_1 \in B_{2k_1}$ and $b_2 \in B_{2k_2}$ are isotopic if and only if there is a sequence of following moves which transforms $b_1$ to $b_2$.  
\begin{enumerate}
\item[\bf P1.]\quad
$\sigma_1\, b \longleftrightarrow b \longleftrightarrow b\, \sigma_1\qquad
(b \in B_{2k})$
\item[\bf P2.]\quad
$\sigma_{2i}\,\sigma_{2i+1}\,\sigma_{2i-1}\,\sigma_{2i}\, b
\longleftrightarrow
b
\longleftrightarrow
b \, \sigma_{2i}\,\sigma_{2i+1}\,\sigma_{2i-1}\,\sigma_{2i}
\quad
(b \in B_{2k}, \ i = 1, 2, \cdots, k-1)$
\item[\bf  P3.]\quad
$\sigma_2\, \sigma_1^2 \, \sigma_2 \, b 
\longleftrightarrow
b
\longleftrightarrow
b \, \sigma_2\, \sigma_1^2 \, \sigma_2
\qquad
(b \in B_{2k})$
\item[\bf P4.]\quad
$b \longleftrightarrow
\sigma_{2k}\, b \quad
(b \in B_{2k}, \ \  \sigma_{2k} \, b \in B_{2k+2})$.
\end{enumerate}
\label{prop:plat}
\end{prop}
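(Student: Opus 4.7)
The plan is to prove the two directions separately. For the forward direction I would verify that each of P1-P4 preserves the isotopy class of the plat closure by a localized geometric argument near the bridges that close the braid. For P1, an extra $\sigma_1$ adjacent to the cap or cup joining strands $1$ and $2$ is cancelled by a half-turn of that cap/cup. For P2, the word $\sigma_{2i}\sigma_{2i+1}\sigma_{2i-1}\sigma_{2i}$, placed next to the two bridges joining $(2i-1,2i)$ and $(2i+1,2i+2)$, is seen diagrammatically to interchange these two bridges and hence act trivially on the plat closure. For P3, the word $\sigma_2\sigma_1^2\sigma_2$ produces a Reidemeister~II configuration once the adjacent cap merges the two inner strands. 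For P4, inserting $\sigma_{2k}$ together with the new cup and cap creates a standardly embedded unknotted loop that slides off by isotopy; this is the plat analogue of Markov stabilization. Each of these checks is routine once a picture of the bridges is drawn.

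For the converse direction, I would appeal to \cite[Theorem 2]{CG}, which is essentially the plat version of Markov's theorem. The only remaining task is to verify that the set of moves as stated here generates the same equivalence as the one used in the reference. The symmetric two-sided form of P1, P2, P3 (insertions both on the left and on the right of $b$) obviates the need for a separate conjugation move, since any conjugation $b \leftrightarrow c\,b\,c^{-1}$ is built from one-sided insertions and braid relations. A bookkeeping step should express each move of \cite{CG} as a finite composition of P1-P4 and conversely.

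The main obstacle is this dictionary between the two formulations, which can be delicate if the conventions of \cite{CG} differ in the placement of caps, the sign of the half-twists, or the side on which stabilization is performed. A fresh proof of the converse direction that avoids the citation would require redoing the Heegaard splitting or bridge-sphere argument underlying Birman's classical plat Markov theorem, which goes well beyond the local diagrammatic verification needed for the forward direction.
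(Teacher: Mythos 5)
Your overall strategy is the same as the paper's: the forward direction is a routine diagrammatic check (the paper simply calls it obvious), and the converse is reduced to Theorem 2 of \cite{CG} plus a translation between that theorem's move set and {\bf P1}--{\bf P4}. The problem is that you defer exactly the step that constitutes the mathematical content of the proof. You call the dictionary between the two move sets ``a bookkeeping step'' and simultaneously admit it is ``the main obstacle,'' but you never produce it. For {\bf P1}--{\bf P3} there is in fact nothing to do: the genus-$0$ case of \cite{CG} already gives two-sided moves {\bf M1}, {\bf M2}, {\bf M3} that are literally identical to {\bf P1}, {\bf P2}, {\bf P3}, so your discussion of eliminating ``a separate conjugation move'' is addressing a non-issue. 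The real discrepancy is entirely in the stabilization move, and there it is twofold.

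First, the stabilization of \cite{CG} is not $b\leftrightarrow b\,\sigma_{2k}$ but $b\leftrightarrow T_j(b)\,\sigma_{2j}$, where $T_j:B_{2k}\to B_{2k+2}$ sends $\sigma_{2j}$ to the conjugated word $\sigma_{2j}\sigma_{2j+1}\sigma_{2j+2}\sigma_{2j+1}^{-1}\sigma_{2j}^{-1}$ and reindexes the remaining generators. The paper disposes of this by the identity $T_j(b)=(\tau_{j+1}\cdots\tau_k)\,b\,(\tau_{j+1}\cdots\tau_k)^{-1}$ with $\tau_i=\sigma_{2i}\sigma_{2i-1}\sigma_{2i+1}\sigma_{2i}$, which lets one absorb the conjugators using {\bf M2} and reduce {\bf M6} to the plain right stabilization $b\leftrightarrow b\,\sigma_{2k}$. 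Second, {\bf P4} is a \emph{left} stabilization $b\leftrightarrow\sigma_{2k}\,b$, not a right one, so a further argument is needed: the paper applies the anti-automorphism $\bar\varphi$ with $\bar\varphi(b)=\varphi(b)^{-1}$, $\varphi(\sigma_i)=\sigma_i^{-1}$, observes that $\widehat{\bar\varphi(b)}$ is isotopic to $\widehat b$ (a $\pi$-rotation of the diagram), that {\bf M1}--{\bf M3} are preserved, and that right stabilization is carried to {\bf P4}. Neither of these two reductions appears in your proposal, and without them the claim that {\bf P1}--{\bf P4} generate the same equivalence as the moves of \cite{CG} is unsubstantiated.
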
 
\begin{proof}
The ``if'' part is obvious and we prove the ``only if'' part.  
Applying Theorem 2 of \cite{CG} to the genus 0 case, we know  that two elements in $B_{2k}$ determine equivalent links if and only if they are connected by a sequence of moves {\bf M1}, {\bf M2}, {\bf M3} and {\bf M6} given in the theorem as follows.  
\begin{enumerate}
\item[\bf M1.]\quad
$\sigma_1 \, b \longleftrightarrow b \longleftrightarrow b\, \sigma_1$
\item[\bf M2.]\quad
$\sigma_{2i}\,\sigma_{2i+1}\, \sigma_{2i-1}\, \sigma_{2i} \, b 
\longleftrightarrow b \longleftrightarrow 
b\,\sigma_{2i}\,\sigma_{2i+1}\, \sigma_{2i-1}\, \sigma_{2i}$
\item[\bf M3.]\quad
$\sigma_{2}\, \sigma_{1}^2 \, \sigma_{2}\, b 
\longleftrightarrow b \longleftrightarrow 
b \, \sigma_{2}\, \sigma_{1}^2 \, \sigma_{2}$ 
\item[\bf M6.]\quad
$b \longleftrightarrow T_j(b)\, \sigma_{2j}$
\quad
where $T_j : B_{2k} \to B_{2k+2}$ is given by
\[
T_j(\sigma_i) = 
\begin{cases}
\sigma_i & \text{if $i < 2j$} 
\\
\sigma_{2j}\,\sigma_{2j+1}\, \sigma_{2j+2}\, \sigma_{2j+1}^{-1}\, \sigma_{2j}^{-1} & \text{if $i = 2j$}
\\
\sigma_{i+2} & \text{if $i > 2j$}.
\end{cases}
\]
\end{enumerate}
Let {\bf M6'} be the following move.
\begin{enumerate}
\item[\bf M6'.]\quad
$b \longleftrightarrow b\, \sigma_{2k}$. 
\end{enumerate}
By Lemma \ref{lem:M6} below, we can use the move {\bf M6'}  instead of {\bf M6}.  
Let $\varphi$ be the automorphism of $B_{2k}$ sending $\sigma_i$ to $\sigma_i^{-1}$, and let $\bar\varphi(b) = \varphi(b)^{-1}$.  	
Then $\bar\varphi$ is  an anti-automorphism of $B_{2k}$, and the plat closure  $\widehat{\bar\varphi(b)}$ is isotopic to $\widehat b$ since $\widehat{\bar\varphi(b)}$ is obtained from $\widehat b$ by $\pi$ rotation around the horizontal line  as in Figure \ref{fig:rotation}.  
\begin{figure}[htb]
\[
\widehat{b} \quad
\text{---}
\begin{matrix}
\includegraphics[scale=0.8]{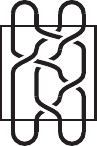}
\end{matrix}
\text{---}
\begin{matrix}
\includegraphics[scale=0.8]{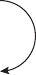}
\end{matrix}
\quad
\longrightarrow
\quad
\begin{matrix}
\includegraphics[scale=0.8]{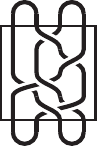}
\end{matrix}
\quad
\widehat{\bar\varphi(b)}
\]
\caption{Compare $\widehat{b}$ and $\widehat{\bar\varphi(b)}$ where $b = \sigma_2^{-1} \sigma_3\sigma_1\sigma_2^{-1}\sigma_3\sigma_2^{-1}$.}
\label{fig:rotation}
\end{figure}
By applying $\bar\varphi$,  the moves {\bf M1}, {\bf M2}, {\bf M3} do not change and are equal to  {\bf P1}, {\bf P2}, {\bf P3} respectively, and {\bf M6'} is converted to   {\bf P4}. 
Hence two elements in $B_{2k}$ determine equivalent links if and only if they are connected by a sequence of moves {\bf P1}, {\bf P2}, {\bf P3} and {\bf P4}.  
\end{proof}
\begin{lemma} 
For genus 0 case, the move {\bf M6} is given by a sequence of {\bf M6'} and {\bf M2}.
\label{lem:M6}
\end{lemma}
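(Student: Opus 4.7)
My plan is to first apply {\bf M6'} to $b \in B_{2k}$ to obtain $b\,\sigma_{2k} \in B_{2k+2}$, and then show that $b\,\sigma_{2k}$ and $T_j(b)\,\sigma_{2j}$ are equivalent inside $B_{2k+2}$ under {\bf M2}; chaining these equivalences yields exactly {\bf M6}. A preliminary observation that will be used throughout is that {\bf M2} is stated in both directions ($b \leftrightarrow c_i b$ and $b \leftrightarrow b c_i$, where $c_i = \sigma_{2i}\sigma_{2i+1}\sigma_{2i-1}\sigma_{2i}$), so the reverse insertions $b \leftrightarrow c_i^{-1} b$ and $b \leftrightarrow b c_i^{-1}$ are also available at the element level: writing $b = c_i(c_i^{-1} b)$ and applying $c_i x \leftrightarrow x$ brings us to $c_i^{-1}b$. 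Consequently, {\bf M2} generates the equivalence $b \sim L\,b\,R$ for arbitrary $L, R$ in the subgroup $G := \langle c_1,\dots,c_k\rangle \subset B_{2k+2}$, and in particular realizes arbitrary conjugation by $G$.

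I would then proceed by downward induction on $k-j$. The base $j = k$ is trivial, since $T_k$ is the standard inclusion $B_{2k}\hookrightarrow B_{2k+2}$, so {\bf M6} coincides with {\bf M6'}. For the inductive step it suffices to prove $T_{j+1}(b)\,\sigma_{2(j+1)} \sim T_j(b)\,\sigma_{2j}$ in $B_{2k+2}$ via {\bf M2}. The key algebraic identity, verified on generators, is that conjugation by $c_{j+1}$ intertwines $T_{j+1}$ and $T_j$: for $\sigma_i$ with $i < 2j$ or $i > 2j+2$, both $T_{j+1}(\sigma_i)$ and $T_j(\sigma_i)$ agree and commute with $c_{j+1}$ (which involves only $\sigma_{2j+1}, \sigma_{2j+2}, \sigma_{2j+3}$); for $i \in \{2j, 2j+1, 2j+2\}$ one checks $c_{j+1}^{-1}\,T_{j+1}(\sigma_i)\,c_{j+1} = T_j(\sigma_i)$ by short computations using the braid relations $\sigma_a\sigma_{a+1}\sigma_a = \sigma_{a+1}\sigma_a\sigma_{a+1}$ and the commutation of non-adjacent generators; in particular the cabled formula $T_j(\sigma_{2j}) = \sigma_{2j}\sigma_{2j+1}\sigma_{2j+2}\sigma_{2j+1}^{-1}\sigma_{2j}^{-1}$ is forced by this conjugation. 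Hence $c_{j+1}^{-1}\,T_{j+1}(b)\,c_{j+1} = T_j(b)$ for all $b\in B_{2k}$, and after a final one-sided {\bf M2}-multiplication by suitable elements of $G$ to turn the trailing $\sigma_{2(j+1)}$ into $\sigma_{2j}$, we obtain the desired equivalence.

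The main obstacle is precisely the last step: conjugation by $c_{j+1}$ alone produces $T_j(b)\cdot\bigl(c_{j+1}^{-1}\sigma_{2(j+1)}c_{j+1}\bigr)$, which differs from $T_j(b)\sigma_{2j}$ by a word in the generators around position $2j$, and one must show this remainder can be absorbed by left/right multiplication by elements of $G$. Geometrically, $T_{j+1}(b)\sigma_{2(j+1)}$ and $T_j(b)\sigma_{2j}$ represent the same plat-closed link with the newly added unknotted pair sitting at two different plat positions, so the required {\bf M2}-equivalence is just the operation of sliding that new pair one step to the left; this makes the existence of suitable $L, R \in G$ plausible, and the technical content of the proof lies in producing explicit $L, R$ and running the braid-relation computation to completion.
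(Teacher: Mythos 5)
Your overall strategy is close in spirit to the paper's: both arguments rest on the observation that $T_j$ is conjugation by a product of the {\bf M2} elements $\tau_i=\sigma_{2i}\sigma_{2i-1}\sigma_{2i+1}\sigma_{2i}$, so that the conjugating word can be introduced and stripped off by {\bf M2}, and your preliminary remark that {\bf M2} also yields multiplication by $c_i^{-1}$ is correct and is implicitly used in the paper as well. However, there is a genuine gap, and you flag it yourself: after conjugating, you are left with $T_j(b)\cdot\big(c_{j+1}^{-1}\sigma_{2(j+1)}c_{j+1}\big)$ rather than $T_j(b)\,\sigma_{2j}$, and you only assert that the discrepancy ``can be absorbed by left/right multiplication by elements of $G$,'' deferring the actual computation. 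That absorption is precisely the nontrivial content of the lemma, and the justification you offer for its existence --- that the two braids have isotopic plat closures --- is circular in this setting: the lemma is a step in proving that the listed moves generate plat equivalence, so isotopy of the closures cannot be invoked to conclude that a pure {\bf M2}-word does the job. Note also that a single conjugation by $c_{j+1}$ does \emph{not} send the trailing $\sigma_{2j}$ to $\sigma_{2j+2}$ (one computes $\tau_{j+1}^{-1}\sigma_{2j}\tau_{j+1}=\sigma_{2j}\sigma_{2j+1}\sigma_{2j+2}\sigma_{2j+1}^{-1}\sigma_{2j}^{-1}$, not a single generator), so the leftover word is genuinely there and no move in your list removes a stray $\sigma_{2j}$ for $j<k$; only $\sigma_{2k}$ at the top position is removable, via {\bf M6'}.

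The paper closes exactly this gap by sending the trailing generator in the opposite direction. Writing $w=\tau_{j+1}\tau_{j+2}\cdots\tau_k$, it uses the two identities $T_j(b)\,w=w\,b$ and $\sigma_{2j}\,w=w\,\sigma_{2k}$, so that
\begin{equation*}
T_j(b)\,\sigma_{2j}\ \underset{\text{\bf M2}}{\longleftrightarrow}\ T_j(b)\,\sigma_{2j}\,w\ =\ w\,b\,\sigma_{2k}\ \underset{\text{\bf M6'}}{\longleftrightarrow}\ w\,b\ \underset{\text{\bf M2}}{\longleftrightarrow}\ b;
\end{equation*}
the stray generator is pushed all the way up to position $2k$, where {\bf M6'} applies, rather than down to position $2j$. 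If you wish to keep your inductive formulation, you must prove the one-step analogue of this sliding identity explicitly (and verify the intertwining relation on the generators $\sigma_{2j+1}$ and $\sigma_{2j+2}$, which you only sketch); as written, the inductive step is not established.
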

\begin{proof}
Let $\tau_i = \sigma_{2i}\sigma_{2i-1}\sigma_{2i+1}\sigma_{2i}$, 
then 
$T_j(b) = (\tau_{j+1} \, \tau_{j+2}\,\cdots\, \tau_k)\, b\, (\tau_{j+1} \, \tau_{j+2}\,\cdots\, \tau_k)^{-1}$ and
\begin{multline*}
T_j(b)\,\sigma_{2j}\underset{\text{\bf M2}}{\longleftrightarrow}
T_j(b)\,\sigma_{2j}\, \tau_{j+1} \, \tau_{j+2}\cdots\tau_k
=
T_j(b)\,\tau_{j+1} \,\tau_{j+2} \cdots\tau_{k}\, \sigma_{2k}=
\\
\tau_{j+1} \, \tau_{j+2}\,\cdots\, \tau_k\,b\,\sigma_{2k}
\underset{\text{\bf M6'}}{\longleftrightarrow}
\tau_{j+1} \, \tau_{j+2}\,\cdots\, \tau_k\,b
\underset{\text{\bf M2}}{\longleftrightarrow}
b.
\end{multline*}
\label{lem:markov}
\end{proof}
\subsection{Braided Hopf diagrams}
For the bottom tangles corresponding to operations of a Hopf algebra, we use the  diagrams in Figure \ref{fig:BHD} instead of the bottom tangles.  
We call such diagrams {\it braided Hopf diagrams} and they should be read bottom to top.

The action of a braided Hopf diagram is given by the corresponding bottom tangle.  
\begin{figure}[htb]
\[
\begin{matrix}
\begin{matrix}
\begin{matrix}
\includegraphics[scale=0.6]{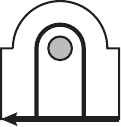}
\end{matrix}
& \leftrightarrow &
\begin{matrix}
\includegraphics[scale=0.8]{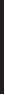}
\end{matrix}\ ,
\\
id & 
\end{matrix}
\qquad
\begin{matrix}
\begin{matrix}
\epsfig{file=hopf1, scale=0.6}
\end{matrix}
& \leftrightarrow &
\begin{matrix}
\epsfig{file=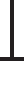, scale=0.8}
\end{matrix}\ ,
\\
\eta
\end{matrix}
\qquad
\begin{matrix}
\begin{matrix}
\\{}\\[-8pt]
\epsfig{file=hopf0, scale=0.6}
\end{matrix}
&\leftrightarrow&
\begin{matrix}
\epsfig{file=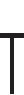, scale=0.8}
\end{matrix},
\\
\varepsilon
\end{matrix}
\qquad
\begin{matrix}
\begin{matrix}
\includegraphics[scale=0.6]{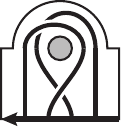}
\end{matrix}
& \leftrightarrow & 
\begin{matrix}
\includegraphics[scale=0.8]{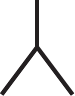}
\end{matrix},
\\
\mu
\end{matrix}
\\{}\\
\begin{matrix}
\begin{matrix}
\epsfig{file=hopfS, scale=0.6}
\end{matrix}
& \leftrightarrow &
\begin{matrix}
\epsfig{file=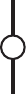, scale=0.8}
\end{matrix}\ ,
\\
S
\end{matrix}
\qquad
\begin{matrix}
\begin{matrix}
\epsfig{file=hopfSinverse, scale=0.6}
\end{matrix}
& \leftrightarrow &
\begin{matrix}
\epsfig{file=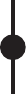, scale=0.8}
\end{matrix}\ ,
\\
S^{-1}
\end{matrix}
\qquad
\begin{matrix}
\begin{matrix}
\epsfig{file=hopfdelta, scale=0.6}
\end{matrix}
&\leftrightarrow&
\begin{matrix}
\epsfig{file=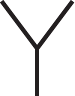, scale=0.8}
\end{matrix},
\\
\Delta
\end{matrix}
\\{}\\
\begin{matrix}
\begin{matrix}
\epsfig{file=hopfbraiding, scale=0.6}
\end{matrix}
&\leftrightarrow&
\begin{matrix}
\epsfig{file=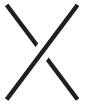, scale=0.8}
\end{matrix},
\\
\Psi
\end{matrix}
\qquad
\begin{matrix}
\begin{matrix}
\epsfig{file=hopfcoadjoint, scale=0.6}
\end{matrix}
&\leftrightarrow&
\begin{matrix}
\epsfig{file=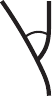, scale=0.8}
\end{matrix}
&=&
\begin{matrix}
\scalebox{1}[-1]{\includegraphics[scale=0.6]{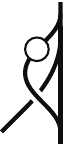}}
\end{matrix}\ .
\\
\mathrm{ad}
\end{matrix}
\\{}\\
\begin{matrix}
\begin{matrix}
\epsfig{file=freearcs19, scale=0.6}
\end{matrix}
&\leftrightarrow&
\begin{matrix}
\epsfig{file=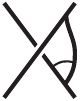, scale=0.8}
\end{matrix}\ ,
\\
T_\sigma
\end{matrix}
\qquad
\begin{matrix}
\begin{matrix}
\epsfig{file=freearcs12, scale=0.6}
\end{matrix}
&\leftrightarrow&
\begin{matrix}
\epsfig{file=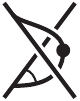, scale=0.8}
\end{matrix}\ .
\\
T_{\sigma^{-1}}
\end{matrix}
\end{matrix}
\]
\caption{Braided Hopf diagrams corresponding to bottom tangles.}
\label{fig:BHD}
\end{figure}
%
%
%
%

We also recall a few properties of $\mathrm{ad}$ in the notation of Hopf diagrams in Figure \ref{fig:propad}. 
For the proofs see \cite{MV} (read after reflecting all pictures in the $x$-axis) or convert them to bottom tangles. Note that braided commutativity may not hold in all braided Hopf algebras but in the context of bottom tangles it is always satisfied. 

\begin{figure}[htb]
\[
\begin{matrix}

\begin{matrix}
\begin{matrix}
\scalebox{1}[-1]{\includegraphics[scale=0.7]{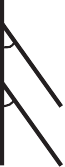}}
\end{matrix}
&= &
\begin{matrix}
\scalebox{1}[-1]{\includegraphics[scale=0.7]{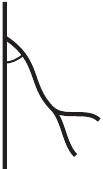}}
\end{matrix}\ ,
\\
\end{matrix}
\qquad

\begin{matrix}
\begin{matrix}
\scalebox{1}[-1]{\includegraphics[scale=0.7]{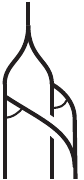}}
\end{matrix}
&= &
\begin{matrix}
\scalebox{1}[-1]{\includegraphics[scale=0.7]{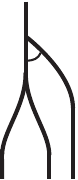}}
\end{matrix}\ ,
\\

\end{matrix}
\qquad

\begin{matrix}
\begin{matrix}
\scalebox{1}[-1]{\includegraphics[scale=0.7]{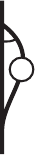}}
\end{matrix}
&= &
\begin{matrix}
\scalebox{1}[-1]{\includegraphics[scale=0.7]{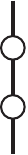}}
\end{matrix}\ ,
\end{matrix}
\\
\begin{matrix}
\begin{matrix}
\scalebox{1}[-1]{\includegraphics[scale=0.7]{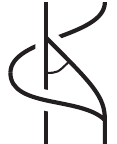}}
\end{matrix}
\ = \ 
\begin{matrix}
\scalebox{1}[-1]{\includegraphics[scale=0.7]{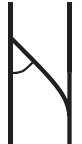}}
\end{matrix}\ ,
&\qquad&
\begin{matrix}
\scalebox{1}[-1]{\includegraphics[scale=0.7]{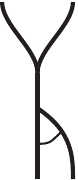}}
\end{matrix}
\ = \ 
\begin{matrix}
\scalebox{1}[-1]{\includegraphics[scale=0.7]{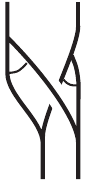}}
\end{matrix}\ ,
\\
\begin{matrix}
\scalebox{1}[-1]{\includegraphics[scale=0.7]{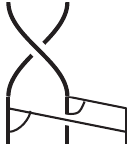}}
\end{matrix}
\ = \ 
\begin{matrix}
\scalebox{1}[-1]{\includegraphics[scale=0.7]{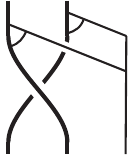}}
\end{matrix}\ ,
&\qquad&
\begin{matrix}
\scalebox{1}[-1]{\includegraphics[scale=0.7]{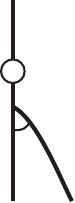}}
\end{matrix}
\ = \ 
\begin{matrix}
\scalebox{1}[-1]{\includegraphics[scale=0.7]{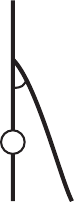}}
\end{matrix}\ .
\\
\text{braided\qquad\quad}
\\
\text{commutativity}
\end{matrix}
\end{matrix}
\]
\caption{Some properties of $\mathrm{ad}$.
The leftmost relation of the bottom row is called the braided commutativity.}
\label{fig:propad}
\end{figure}

\subsection{A quantum deformation of a knot group}
\label{sub:universal}
Let $K$ be a knot or a link and $b\in B_{2k}$ be a braid whose plat closure is isotopic to $K$.  
Let $T_b$ be the bottom tangle corresponding to $b$ and 
$\widehat{T}_b$, $\boldsymbol{\varepsilon}_k$ be the bottom tangles in $\mathcal{T}_{k, 2k}$ as follows.  
\begin{align*}
\widehat{T}_b 
&= 
\big(\mu \circ(id \otimes S)\big)^{\otimes k} \circ 
(\mu_{2k-1}\circ\Psi_{2k})\circ\cdots\circ
\\
&\qquad\qquad
(\mu_3\circ\Psi_4\circ\cdots\circ\Psi_{2k-1}\circ\Psi_{2k})
\circ
(\mu_1\circ\Psi_2\circ\cdots\circ\Psi_{2k-1}\circ\Psi_{2k})
\circ
T_b \circ \Delta^{\otimes k},
\\
\boldsymbol{\varepsilon}_k
&=
\varepsilon^{\otimes k}\otimes id^{\otimes k}.  
\end{align*}
\begin{figure}[htb]
\[
\widehat{T}_b : \quad
\begin{matrix}
\includegraphics[scale=0.8]{hatt}
\end{matrix} \ ,
\qquad
\boldsymbol{\varepsilon}_k : \quad
\begin{matrix}
\includegraphics[scale=0.8]{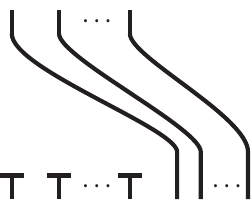}
\end{matrix} \ .
\]
\caption{The bottom tangles $\widehat{T}_b$ and $\boldsymbol{\varepsilon}_k$.}
\label{fig:hatT}
\end{figure}
Let $J_b$ be the submodule of $\mathcal{T}_k$ spanned by the image of $\widehat{T}_b-\boldsymbol{\varepsilon}_k$, and 
let $\mathcal{A}_b = \mathcal{T}_k/J_b$.  
Then the quotient space $\mathcal{A}_b$ has an $\Ad$-coinvariant comodule structure.  
This comodule $\mathcal{A}_b$ is graded as $\mathcal{A}_{b} = \oplus_{n=0}^\infty \mathcal{A}_{b, n}$ where $\mathcal{A}_{b, n} = \mathcal{T}_{k,n}/(\widehat{T}_b-\boldsymbol{\varepsilon}_k)(\mathcal{T}_{2k,n})$.  
\begin{thm}
\label{thm:space}
If the plat closures of two braids $b_1$ and $b_2$ are isotopic, then $\mathcal{A}_{b_1}$ and $\mathcal{A}_{b_2}$ are isomorphic as $\Ad$-comodules.  
\label{thm:main}
\end{thm}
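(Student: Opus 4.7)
The proof will proceed by verifying invariance of $\mathcal{A}_b$ under each of the four plat Markov moves P1, P2, P3, P4 of Proposition \ref{prop:plat}. Since any two braids with isotopic plat closures are connected by a finite sequence of these moves (with the strand count $k$ potentially changing under P4), it suffices to construct an $\Ad$-comodule isomorphism $\mathcal{A}_b \cong \mathcal{A}_{b'}$ for each elementary move $b \leftrightarrow b'$. Because $\Ad$ commutes with every bottom tangle (Proposition \ref{prop:Ad}), any $\mathcal{R}$-module map induced from a bottom-tangle-level map is automatically a map of $\Ad$-comodules, so the verification reduces to producing a linear isomorphism on the quotients.

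For the moves P1, P2, P3, which preserve $k$, my plan is to show that the submodule $J_b \subset \mathcal{T}_k$ is unchanged, so that the identity of $\mathcal{T}_k$ descends to the desired isomorphism. Concretely, the task is to compare $\widehat{T}_{b'}$ with $\widehat{T}_b$ after the appropriate insertion of $T_\sigma^{\pm 1}$'s and to absorb the extra crossings into the closure $(\mu\circ(id\otimes S))^{\otimes k}$ using braided Hopf algebra axioms. For P1, the additional $\sigma_1^{\pm 1}$ acts on the two strands that merge into the outermost cup, so using $T_\sigma = \mu_2\circ\Psi_1\circ(id\otimes \ad)$ together with the antipode axiom and the identity $\mu\circ(id\otimes S)\circ\Delta=\eta\circ\varepsilon$ should give $\mu\circ(id\otimes S)\circ T_\sigma = \mu\circ(id\otimes S)$, and similarly on the $\Delta^{\otimes k}$ side by braided cocommutativity of the coproduct paired with $\ad$. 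For P2, the braid $\tau_i = \sigma_{2i}\sigma_{2i+1}\sigma_{2i-1}\sigma_{2i}$ exchanges the $i$-th and $(i{+}1)$-th cups; I plan to recognise this algebraically as the transposition of two adjacent $\mu\circ(id\otimes S)$ factors in $\widehat{T}_b$, which is implemented by the braided commutativity identity \eqref{eq:commutativity} propagated to the corresponding pair of cups via the rules for $\ad$ in Figure \ref{fig:propad}.

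For P3, the braid $\sigma_2\sigma_1^2\sigma_2$ is a full twist between the first two cups. My plan is to view the twist as generated, modulo Hopf algebra identities, from combinations of $\ad$-insertions that are then absorbed by a single pair of adjacent cups using the identity for two strands capped by $\mu\circ(id\otimes S)$. For P4, the stabilization $b \leftrightarrow \sigma_{2k}b$ changes $k$ to $k+1$. I will construct mutually inverse $\Ad$-comodule maps $\mathcal{A}_b \rightleftarrows \mathcal{A}_{\sigma_{2k}b}$ using the natural maps $\mathcal{T}_k \to \mathcal{T}_{k+1}$ that add a trivial puncture with a strand $\eta$ and, in the other direction, $id^{\otimes k}\otimes \varepsilon$ that forgets the new puncture. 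The key point here is that the $\sigma_{2k}$ insertion produces a new cup linked to no other strand, so the extra relation coming from $(\widehat{T}_{\sigma_{2k}b}-\boldsymbol{\varepsilon}_{k+1})$ is precisely the tautology that forgets the extra strand, matching the old relations under the $\eta$-insertion.

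The main obstacle will be step P3: translating the geometric absorption of a full twist into a diagrammatic identity purely in the language of braided Hopf algebra axioms requires careful bookkeeping of braidings, antipodes and framing corrections, and is likely to demand a sequence of moves using all of the properties of $\ad$ collected in Figure \ref{fig:propad} together with Proposition \ref{prop:flat} applied to the flat pieces of $T_b$. A secondary technical point, for P4, is to check that the two maps genuinely descend to the quotients $\mathcal{A}_b, \mathcal{A}_{\sigma_{2k}b}$ (rather than just to $\mathcal{T}_k, \mathcal{T}_{k+1}$) and that their composition on each quotient is the identity; this amounts to showing that $\widehat{T}_{\sigma_{2k}b}(T\otimes \eta) - \boldsymbol{\varepsilon}_{k+1}(T\otimes \eta)$ lies in the subspace generated by $\widehat{T}_{b}(T)-\boldsymbol{\varepsilon}_k(T)$ for every $T$, which I plan to establish by separating the new last cup from the rest of the tangle via Proposition \ref{prop:flat} applied to the flat stabilisation strand.
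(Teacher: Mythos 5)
Your overall skeleton is the same as the paper's: reduce to the plat Markov moves {\bf P1}--{\bf P4} of Proposition \ref{prop:plat}, treat each move separately, and note that $\Ad$-equivariance is automatic by Proposition \ref{prop:Ad}. However, there are two concrete gaps in the way you propose to execute the stabilization-free moves and the stabilization.

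First, for {\bf P1}--{\bf P3} you claim that $J_b$ is literally unchanged, so that the identity of $\mathcal{T}_k$ descends. This is true only for the versions of these moves applied at the $\Delta^{\otimes k}$ end (e.g.\ $b\,\sigma_1\leftrightarrow b$, where indeed $T_{\sigma_1}\circ\Delta_1=\Delta_1$). For the versions applied at the cap end it fails: the proposed identity $\mu\circ(id\otimes S)\circ T_\sigma=\mu\circ(id\otimes S)$ is not correct, because pushing the crossing through the cap leaves a residual antipode on the output ribbon. What one actually gets is $J_{\sigma_1 b}=(S\otimes id^{\otimes(k-1)})\,J_b$, and similarly $\mathcal{A}_{b\,\sigma_{2i}\sigma_{2i+1}\sigma_{2i-1}\sigma_{2i}}=\Psi_i\,\mathcal{A}_b$ for {\bf P2} and $\mathcal{A}_{\sigma_2\sigma_1^2\sigma_2 b}=(\mu_1\circ S_2^2\circ\Psi_2\circ\ad_2)(\mathcal{A}_b)$ for {\bf P3}; in each case the isomorphism of quotients is induced by a nontrivial automorphism of $\mathcal{T}_k$ (or, for the $\Delta$-side of {\bf P2}, by an automorphism $\sigma_i$ of the source $\mathcal{T}_{2k}$ absorbed into the generating set), not by the identity. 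You also need the ``equivalent pair'' moves of Proposition \ref{prop:deform} ({\bf Ha}, {\bf HmL}, {\bf HmR'}, \dots), which justify replacing a generator $T_1-\boldsymbol{\varepsilon}_k$ of the submodule by $T_2-\boldsymbol{\varepsilon}_k$ without changing the span; ``absorbing crossings by braided Hopf algebra axioms'' alone does not produce equalities of submodules in {\bf P3}.

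Second, for {\bf P4} your proposed pair of maps (adjoin a puncture with $\eta$, forget it with $id^{\otimes k}\otimes\varepsilon$) does not reflect the geometry of the stabilizing clasp: the new cup created by $\sigma_{2k}$ is linked with the $k$-th cup, so the correct destabilization map is $f=\mu_k\circ S_{k+1}:\mathcal{T}_{k+1}\to\mathcal{T}_k$, which merges the last two punctures through an antipode; filling in the last puncture by $\varepsilon$ kills the wrong thing and does not carry $J_{\sigma_{2k}b}$ onto $J_b$. The two verifications you correctly identify as necessary are then $f\big(\Image(\widehat{T}_{\sigma_{2k}b}-\boldsymbol{\varepsilon}_{k+1})\big)=\Image(\widehat{T}_{b}-\boldsymbol{\varepsilon}_{k})$ and $\ker f\subset\Image(\widehat{T}_{\sigma_{2k}b}-\boldsymbol{\varepsilon}_{k+1})$, the latter proved by showing $\widehat{T}_{\sigma_{2k}b}(\bx\otimes\by)=0$ for $\by\in\ker f$, so that $(\widehat{T}_{\sigma_{2k}b}-\boldsymbol{\varepsilon}_{k+1})(\bx\otimes\by)=-\by$.
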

To prove this theorem, we introduce a notion of an equivalent pair. 

\subsection{Equivalent pair}
\begin{definition}
Two bottom tangles $T_1$ and $T_2$ are called {\it an equivalent pair} with respect to $b$ if $T_1-T_2$ generates $J_b$.
Such pair is denoted by $T_1 \sim_b T_2$.  
\end{definition}
For an equivalent pair $T \sim_b \boldsymbol{\varepsilon}_n$, we can deform $T$ by the following moves.   
\begin{prop}[Cf. \cite{MV}, Proposition 4.8]
Let  $T_1$, $T_2$ be a pair of braided Hopf diagrams which are identical except their  bottom parts as in the following pictures.    
Assuming that $T_j(1^{\otimes k} \otimes \by) = \by$ for $j=1$, $2$, we have
 $T_1 \sim_b {\varepsilon}^{\otimes k}\otimes id^{\otimes k}$ if and only if $T_2 \sim_b {\varepsilon}^{\otimes k}\otimes id^{\otimes k}$.  
\label{prop:BHDmult}
\begin{align*}
\text{\bf HmL}&\qquad
T_1 \ \begin{matrix}
\includegraphics[scale=0.7]{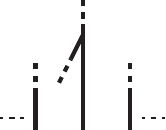}
\\ i 
\end{matrix}
\ \ \longleftrightarrow \quad  T_2 \ 
\begin{matrix}
\includegraphics[scale=0.7]{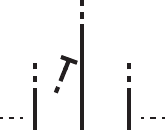}
\\ i
\end{matrix}
\\
\text{\bf HmR}&
\qquad
T_1 \ 
\begin{matrix}
\includegraphics[scale=0.7]{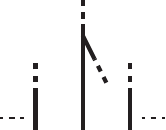}
\end{matrix}
\ \ \longleftrightarrow \quad  T_2 \ 
\begin{matrix}
\includegraphics[scale=0.7]{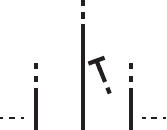}
\end{matrix}
\\
\text{\bf Ha}&\qquad
T_1 \ 
\begin{matrix}
\includegraphics[scale=0.7]{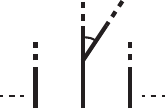}
\end{matrix}
\ \  \longleftrightarrow \quad   T_2 \ 
\begin{matrix}
\includegraphics[scale=0.7]{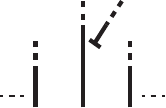}
\end{matrix}
\\
\text{\bf HmL'}&\qquad
T_1 \ \begin{matrix}
\includegraphics[scale=0.7]{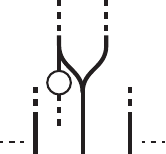}
\end{matrix}
\ \ \longleftrightarrow \quad  T_2 \ 
\begin{matrix}
\includegraphics[scale=0.7]{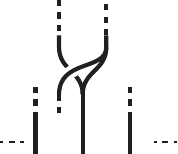}
\end{matrix}
\\
\text{\bf HmR'}&
\qquad
T_1 \ 
\begin{matrix}
\includegraphics[scale=0.7]{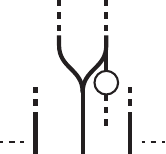}
\end{matrix}
\ \ \longleftrightarrow \quad T_2 \ 
\begin{matrix}
\includegraphics[scale=0.7]{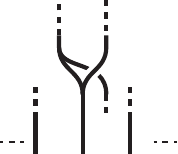}
\end{matrix}
\\
\text{\bf Ha'}&\qquad
T_1 \ 
\begin{matrix}
\includegraphics[scale=0.7]{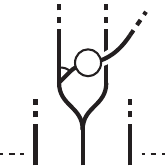}
\end{matrix}
\ \ \ \longleftrightarrow \quad  T_2 \ 
\begin{matrix}
\includegraphics[scale=0.7]{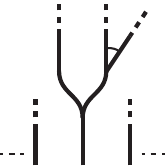}
\end{matrix}
\\
\text{\bf HcL}&\qquad
T_1 \quad
\begin{matrix}
\includegraphics[scale=0.6]{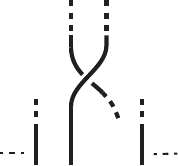}
\end{matrix}
\ \ \longleftrightarrow \quad 
T_2 \ \  
\begin{matrix}
\includegraphics[scale=0.6]{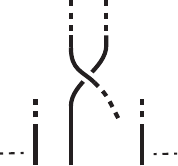}
\end{matrix}
\\
\text{\bf HcR}&\qquad
T_1 \quad 
\begin{matrix}
\includegraphics[scale=0.6]{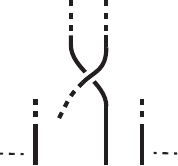}
\end{matrix}
\ \ \longleftrightarrow \quad\ T_2 \ \  
\begin{matrix}
\includegraphics[scale=0.6]{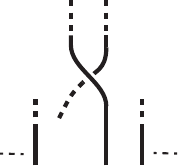}
\end{matrix}
\end{align*}
\label{prop:deform}
\end{prop}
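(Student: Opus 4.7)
The plan is to show that for each of the eight local moves, the difference $T_1-T_2$ already lies in $J_b$. Since $J_b$ is a submodule and $T_j-\boldsymbol{\varepsilon}_k = (T_1-T_2)+(T_{3-j}-\boldsymbol{\varepsilon}_k)$, this immediately yields the claimed biconditional $T_1 \sim_b \boldsymbol{\varepsilon}_k \Leftrightarrow T_2 \sim_b \boldsymbol{\varepsilon}_k$. Recall that $J_b$ is spanned by compositions $(\widehat{T}_b-\boldsymbol{\varepsilon}_k)\circ U$ with $U\in\mathcal{T}_{2k,n}$, so the goal is to rewrite each $T_1-T_2$ as such a sum using the braided Hopf algebra axioms, the definition of $\widehat{T}_b$, and the identities in Figure \ref{fig:propad}.

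First I would dispatch the multiplication moves \textbf{HmL}, \textbf{HmR}, \textbf{HmL'}, \textbf{HmR'} uniformly. Each of them replaces a product $\mu(x\otimes y)$ fed into the bottom of a diagram (possibly with a braiding) by $x$ and $y$ fed separately into two copies; the equivalence is the multiplicativity of the counit $\boldsymbol{\varepsilon}_k\circ\mu = \boldsymbol{\varepsilon}_k\otimes\boldsymbol{\varepsilon}_k$ combined with the construction of $\widehat{T}_b$ from $\mu\circ(id\otimes S)$ precomposed with $T_b\circ\Delta^{\otimes k}$. By Proposition \ref{prop:Bn}, $T_b$ is flat, hence an algebra automorphism by Proposition \ref{prop:algaut} and Proposition \ref{prop:flat}, which allows one to commute $\widehat{T}_b$ past a product on its inputs and produce the telescoping expression exhibiting $T_1-T_2\in J_b$. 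The comultiplication moves \textbf{HcL}, \textbf{HcR} are handled dually: here coassociativity and the presence of $\Delta^{\otimes k}$ at the inputs of $\widehat{T}_b$ let one absorb or emit an extra $\Delta$ at the bottom.

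For the adjoint moves \textbf{Ha} and \textbf{Ha'} I would use the identities for $\mathrm{ad}$ collected in Figure \ref{fig:propad}, most importantly braided commutativity together with the compatibility of $\mathrm{ad}$ with $\Delta$ and with $S$. These moves effectively slide an $\mathrm{ad}$ past the application of $\widehat{T}_b$; the auxiliary braidings produced in the process cancel against the braidings internal to $\widehat{T}_b$, and what remains is again a composition with $\widehat{T}_b-\boldsymbol{\varepsilon}_k$. The main obstacle is the adjoint case, since $\mathrm{ad}=\mu_2\circ\Psi_1\circ(S\otimes\Delta)\circ\Delta$ mixes strands through both $S$ and $\Psi$, so the cancellation of braidings and antipodes requires careful bookkeeping. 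However, each step is a local diagrammatic manipulation that translates the corresponding formal braided Hopf algebra computation from \cite[Proposition 4.8]{MV} into bottom tangle diagrams by means of the dictionary in Figures \ref{fig:Hopf} and \ref{fig:BHD}, reoriented to account for the opposite convention used here. The hypothesis $T_j(1^{\otimes k}\otimes\boldsymbol{y})=\boldsymbol{y}$ ensures that the modifications occur strictly within the bottom portion where $\widehat{T}_b$ acts, so that the local rewrites genuinely produce elements of $J_b$ rather than merely of $\mathcal{T}_k$.
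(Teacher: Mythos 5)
Your reduction has a genuine gap. You propose to show $\Image(T_1-T_2)\subseteq J_b$ and then conclude the biconditional "immediately" from $T_2-\boldsymbol{\varepsilon}_k=(T_2-T_1)+(T_1-\boldsymbol{\varepsilon}_k)$. But in this paper $T_1\sim_b\boldsymbol{\varepsilon}_k$ means that $T_1-\boldsymbol{\varepsilon}_k$ \emph{generates} $J_b$, i.e.\ $\Image(T_1-\boldsymbol{\varepsilon}_k)=J_b$, not merely that the image is contained in $J_b$. Your telescoping argument yields only $\Image(T_2-\boldsymbol{\varepsilon}_k)\subseteq J_b$; the reverse containment $J_b\subseteq\Image(T_2-\boldsymbol{\varepsilon}_k)$ does not follow from knowing $\Image(T_1-T_2)\subseteq J_b$, since an element $(T_1-\boldsymbol{\varepsilon}_k)(x)=(T_1-T_2)(x)+(T_2-\boldsymbol{\varepsilon}_k)(x)$ has its first summand in $J_b$ but not visibly in $\Image(T_2-\boldsymbol{\varepsilon}_k)$. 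Relatedly, your plan to factor $T_1-T_2$ through $\widehat{T}_b-\boldsymbol{\varepsilon}_k$ is unjustified: $T_1,T_2$ are arbitrary diagrams differing by a local bottom move, with no a priori relation to $\widehat{T}_b$ beyond the hypothesis on one of them.

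The paper's proof sidesteps this by establishing the stronger, hypothesis-free identity $\Image(T_1-\boldsymbol{\varepsilon}_k)=\Image(T_2-\boldsymbol{\varepsilon}_k)$ for each move, from which the biconditional is genuinely immediate. The normalization $T_j(1^{\otimes k}\otimes\by)=\by$ is used precisely here --- it identifies $\Image(T_j-\boldsymbol{\varepsilon}_k)$ with $T_j(M_k)$ where $M_k=\ker\boldsymbol{\varepsilon}_k$ --- and not, as you suggest, to confine the rewrites to the region where $\widehat{T}_b$ acts. One then observes (e.g.\ for {\bf HmL}) that $\ker\varepsilon_i\subseteq M_k$ is an ideal and that $T_1$ and $T_2$ induce the same map on $\mathcal{T}_{2k}/\ker\varepsilon_i$, so $T_1(M_k)=T_2(M_k)$. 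If you want to keep your element-chasing style, you would need to prove the two-sided statement $\Image(T_1-T_2)\subseteq\Image(T_1-\boldsymbol{\varepsilon}_k)\cap\Image(T_2-\boldsymbol{\varepsilon}_k)$, which is essentially the paper's $J_1=J_2$ in disguise; as written, your argument proves only half of the proposition.
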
 
\begin{proof}
The proof is similar to that for Proposition 4.8 in \cite{MV}.  
As an example, we give a proof for {\bf HmL}.  
Let $M_k = \ker (\boldsymbol{\varepsilon}_{k} \otimes id^{\otimes k})$, 
$J_1 = T_1(M_k)$ and $J_2 = T_2(M_k)$.  
Note that $J_i = \Image\big(T_i - \boldsymbol{\varepsilon}_k\otimes id^{\otimes k}\big)$ for $i=1, 2$.  
Since $\ker \varepsilon_i \subset M_k$, 
 $T_1$ induces the map $\overline T_1$ from $\mathcal{T}_{2k}/\ker \varepsilon_i$ to $\mathcal{T}_k/J_1$ .  
The multiplication at the $i$-th strand in $\mathcal{T}_{2k}/\ker \varepsilon_i$ is equal to the multiplication preceded by the  counit since $\ker \varepsilon_i$ is an ideal of $\mathcal{T}_{2k}$.  
 Hence $\overline{T}_1(\overline{B})$ is equal to the image of $T_2(B)$ in $\mathcal{T}_k/J_1$ 
for any $B\in \mathcal{T}_{2k}$ and its image $\overline B$  in $\mathcal{T}_{2k}/\ker \varepsilon_i$.  
This means that $J_2$ is contained in $J_1$.  
Similar argument shows that $J_1 \subset J_2$ and we get $J_1 = J_2$.  
Hence, if one of $J_1$, $J_2$ is equal to $J_b$, then the other one is also equal to $J_b$.  
%
\end{proof}
\subsection{Proof of the main theorem}
\begin{proof}
We show that $\mathcal{A}_{b_1}$ is isomorphic to $\mathcal{A}_{b_2}$ if $b_2$ is obtained from $b_1$ by one of the moves in Proposition \ref{prop:plat}.  
We will check this for each of the moves.  
\par
{\bf P1.} \ 
First, we check for $b\, \sigma_1 \longleftrightarrow b$.
By Figure \ref{fig:P11}, $T_{\sigma_1}\circ \Delta_1 = \Delta_1$, and so $J_{b\sigma_1} = J_b$.  
\begin{figure}[htb]
\[
\begin{matrix}
\includegraphics[scale=0.8]{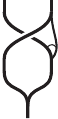}
\end{matrix}
=
\begin{matrix}
\includegraphics[scale=0.8]{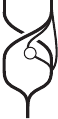}
\end{matrix}
=
\begin{matrix}
\includegraphics[scale=0.8]{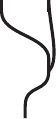}
\end{matrix}
=
\begin{matrix}
\includegraphics[scale=0.8]{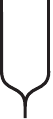}
\end{matrix}
\]
\caption{$\rho(\sigma_1)\circ \Delta_1 = \Delta_1$.}
\label{fig:P11}
\end{figure}
\par
Next, we check for $\sigma_1\, b \longleftrightarrow b$. 
By Figure \ref{fig:P12}, $J_{\sigma_1b} = (S \otimes id^{\otimes(k-1)})\,J_b$ since $S^{-1}$ is an isomorphism.  
Hence 
 $J_b$ is isomorphic to $J_{\sigma_1b}$ since $S$ is an iromorphism.  
\begin{figure}[htb]
\[
\begin{matrix}
\begin{matrix}
\includegraphics[scale=0.8]{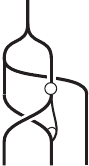}
\end{matrix}
=
\begin{matrix}
\includegraphics[scale=0.8]{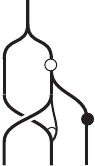}
\end{matrix}
\underset{\text{(bc)}}{=}
\begin{matrix}
\includegraphics[scale=0.8]{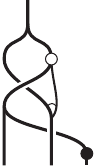}
\end{matrix}
=
\begin{matrix}
\includegraphics[scale=0.8]{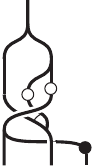}
\end{matrix}
=
\begin{matrix}
\includegraphics[scale=0.8]{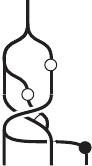}
\end{matrix}
=
\begin{matrix}
\includegraphics[scale=0.8]{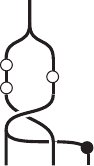}
\end{matrix}
=
\begin{matrix}
\includegraphics[scale=0.8]{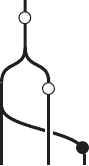}
\end{matrix}
\ ,
\\
\hfill\text{(bc) : braided commutativity}
\end{matrix}
\]
\[
\begin{matrix}
\includegraphics[scale=0.8]{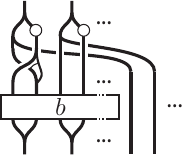}
\end{matrix}
\ - 
\begin{matrix}
\includegraphics[scale=0.8]{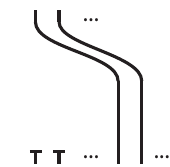}
\end{matrix}
\ = \ \ \ 
\begin{matrix}
\includegraphics[scale=0.8]{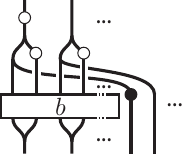}
\end{matrix}
\ - 
\begin{matrix}
\includegraphics[scale=0.8]{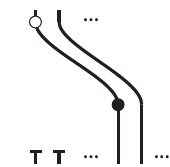}
\end{matrix}\ .
\]
\caption{Invariance for $\sigma_1\, b \longleftrightarrow b$.}
\label{fig:P12}
\end{figure}
\par
{\bf P2.}
First, we check for $b \longleftrightarrow 
b\,\sigma_{2i}\,\sigma_{2i+1}\, \sigma_{2i-1}\, \sigma_{2i}$.  
Since 
\[
\sigma_{2i}\,\sigma_{2i+1}\, \sigma_{2i-1}\, \sigma_{2i}\, \Delta_{2k-1} \cdots\Delta_{2i+1} \, \Delta_{2i-1}\cdots\Delta_1 
=
 \Delta_{2k-1} \cdots\Delta_{2i+1} \, \Delta_{2i-1}\cdots\Delta_1\, \sigma_{i}
\]
 as in Figure \ref{fig:distsigma}, 
 $\varepsilon_{2i-1}\, \varepsilon_{2i}\, \sigma_{2i-1} 
 = 
 \varepsilon_{2i-1}\, \varepsilon_{2i}$
 and $\sigma_i$ is an isomorphism from $\mathcal{T}_n$ to $\mathcal{T}_n$, $\mathcal{A}_{b}$ is isomorphic to $\mathcal{A}_{b\sigma_{2i}\sigma_{2i+1}\sigma_{2i-1} \sigma_{2i}}$.  
 \begin{figure}[htb]
 \[
 \begin{matrix}
\includegraphics[scale=0.8]{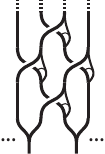}
\end{matrix}
\ = \ 
 \begin{matrix}
\includegraphics[scale=0.8]{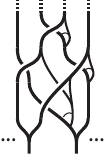}
\end{matrix}
\ = \ 
\begin{matrix}
\includegraphics[scale=0.8]{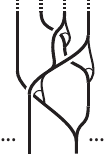}
\end{matrix}
\ = \ 
\begin{matrix}
\includegraphics[scale=0.8]{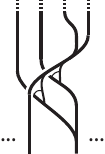}
\end{matrix}
\ = \ 
\begin{matrix}
\includegraphics[scale=0.8]{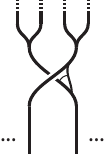}
\end{matrix}
 \]
 \caption{$\sigma_{2i}\sigma_{2i+1}\, \sigma_{2i-1}\sigma_{2i}\Delta_{2k-1} \Delta_{2k-3}\cdots\Delta_1 
=
 \Delta_{2k-1} \Delta_{2k-3}\cdots\Delta_1\sigma_{i}$.}
 \label{fig:distsigma}
 \end{figure}
 \par
Next, we check for $\sigma_{2i}\,\sigma_{2i+1}\, \sigma_{2i-1}\, \sigma_{2i}\, b \longleftrightarrow 
b$. 
Figure \ref{fig:P21} shows that 
$\mathcal{A}_{b\sigma_{2i} \sigma_{2i+1} \sigma_{2i-1} \sigma_{2i}} = \Psi_i \, \mathcal{A}_b$ and so $\mathcal{A}_{b\sigma_{2i} \sigma_{2i+1} \sigma_{2i-1} \sigma_{2i}}$ is isomorphic to $\mathcal{A}_b$.  
 \begin{figure}[htb]
 \begin{multline*}
 \begin{matrix}
\includegraphics[scale=0.8]{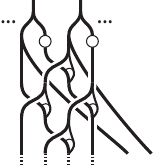}
\end{matrix}
\ = \ 
 \begin{matrix}
\includegraphics[scale=0.8]{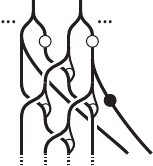}
\end{matrix}
\ \underset{\text{(bc)}}{=} \ 
\begin{matrix}
\includegraphics[scale=0.8]{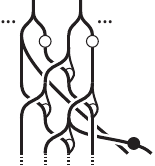}
\end{matrix}
\ \underset{\text{(bc)}}{=} \ 
\begin{matrix}
\includegraphics[scale=0.8]{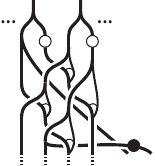}
\end{matrix}
\ = \ 
\begin{matrix}
\includegraphics[scale=0.8]{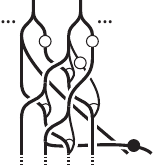}
\end{matrix}
\\
= \ 
\begin{matrix}
\includegraphics[scale=0.8]{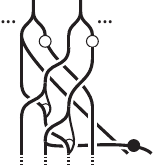}
\end{matrix}
\ = \ 
\begin{matrix}
\includegraphics[scale=0.8]{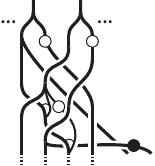}
\end{matrix}
\ = \ 
\begin{matrix}
\includegraphics[scale=0.8]{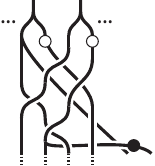}
\end{matrix}
\ = \ 
\begin{matrix}
\includegraphics[scale=0.8]{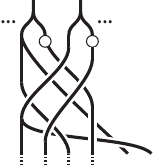}
\end{matrix}
\ = \ 
\begin{matrix}
\includegraphics[scale=0.8]{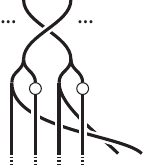}
\end{matrix}\ .
 \end{multline*}
 Therefore,\hfill\ { }
\[
\begin{matrix}
\includegraphics[scale=0.8]{plat41}
\end{matrix}
\ - 
\begin{matrix}
\includegraphics[scale=0.8]{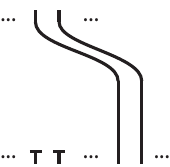}
\end{matrix}
\ = \ \ \ 
\begin{matrix}
\includegraphics[scale=0.8]{plat49}
\end{matrix}
\ - 
\begin{matrix}
\includegraphics[scale=0.8]{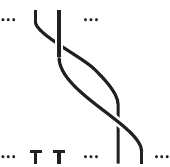}
\end{matrix}\ .
\]
 \caption{Invariance for $\sigma_{2i}\,\sigma_{2i+1}\, \sigma_{2i-1}\, \sigma_{2i}\, b \longleftrightarrow 
b$.}
 \label{fig:P21}
 \end{figure}
 \par
 {\bf P3.}
 First, we check for $b \, \sigma_2\, \sigma_1^2 \, \sigma_2
\longleftrightarrow
b$.  
Figure \ref{fig:P31} shows that 
$\mathcal{A}_{b \sigma_2\sigma_1^2  \sigma_2} = \mathcal{A}_b$.  
\begin{figure}[htb]
\[
\begin{matrix}
\includegraphics[scale=0.8]{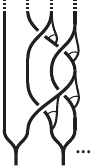}
\end{matrix}
\ = \ 
\begin{matrix}
\includegraphics[scale=0.8]{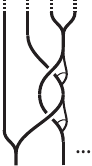}
\end{matrix}
\ \underset{\text{(bc)}}{=} \ 
\begin{matrix}
\includegraphics[scale=0.8]{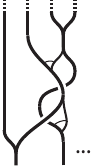}
\end{matrix}
\underset{\text{\bf Ha}}{\longleftrightarrow}\ 
\begin{matrix}
\includegraphics[scale=0.8]{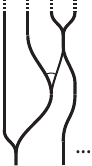}
\end{matrix}
\underset{\text{\bf HmL}}{\longleftrightarrow}\ 
\begin{matrix}
\includegraphics[scale=0.8]{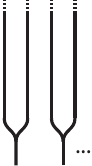}
\end{matrix} \ .
\]
\caption{Invariance for 
$b \, \sigma_2\, \sigma_1^2 \, \sigma_2
\longleftrightarrow b$.}
\label{fig:P31}
\end{figure}
\par
Next, we check for 
$\sigma_2\, \sigma_1^2 \, \sigma_2\, b \longleftrightarrow b$.  
\begin{figure}[htb]
\begin{multline*}
\begin{matrix}
\includegraphics[scale=0.7]{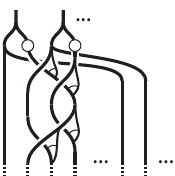}
\end{matrix}
\underset{\text{(bc)}}{=} \ 
\begin{matrix}
\includegraphics[scale=0.7]{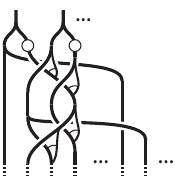}
\end{matrix}
= \ 
\begin{matrix}
\includegraphics[scale=0.7]{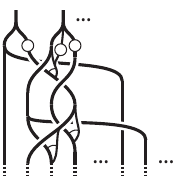}
\end{matrix}
= \ 
\begin{matrix}
\includegraphics[scale=0.7]{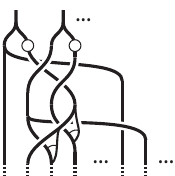}
\end{matrix}
= \ 
\begin{matrix}
\includegraphics[scale=0.7]{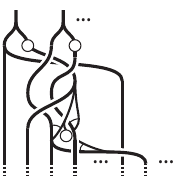}
\end{matrix} 
\underset{\text{(bc)}}{=} \ 
\\
\begin{matrix}
\includegraphics[scale=0.7]{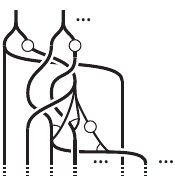}
\end{matrix}\!\!
\underset{\text{(bc)}}{=} 
\begin{matrix}
\includegraphics[scale=0.7]{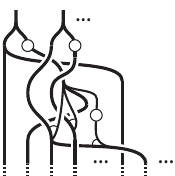}
\end{matrix}\!\!
= 
\begin{matrix}
\includegraphics[scale=0.7]{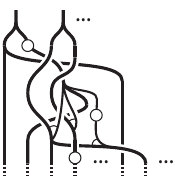}
\end{matrix}\!\!
= 
\begin{matrix}
\includegraphics[scale=0.7]{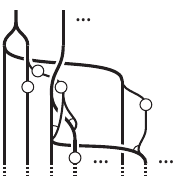}
\end{matrix}\!\!
= 
\begin{matrix}
\includegraphics[scale=0.7]{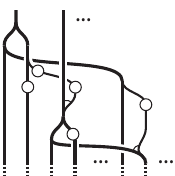}
\end{matrix}\!\!
= 
\begin{matrix}
\includegraphics[scale=0.7]{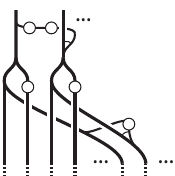}
\end{matrix}\ .
\end{multline*}
Therefore, \hfill{ }
\[
\begin{matrix}
\includegraphics[scale=0.8]{plat61}
\end{matrix}
- \ 
\begin{matrix}
\includegraphics[scale=0.8]{plat492}
\end{matrix}
 \ = \ 
 \begin{matrix}
\includegraphics[scale=0.8]{plat69}
\end{matrix}
- 
 \begin{matrix}
\includegraphics[scale=0.8]{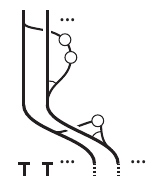}
\end{matrix}\ .
\]
\caption{Invariance for 
$b \, \sigma_2\, \sigma_1^2 \, \sigma_2
\longleftrightarrow b$.}
\label{fig:P32}
\end{figure}
Since $\mu_1 \circ S_2 \circ \Psi_2 \circ \ad_2$ is an isomorphism and $\mu_1 \circ S_2^2 \circ \Psi_2 \circ \ad_2$ is its inverse, 
Figure \ref{fig:P32} shows that $\mathcal{A}_{\sigma_2\sigma_1^2\sigma_2 b} = (\mu_1 \circ S_2^2 \circ \Psi_2 \circ \ad_2)(\mathcal{A}_b)$
and $\mathcal{A}_{\sigma_2\sigma_1^2\sigma_2 b}$ is isomorphic to $\mathcal{A}_b$.  
\par
{\bf P4.}
Let $f$ be the map from $\mathcal{T}_{k+1}$ to $\mathcal{T}_k$ given by $\mu_{k}\circ S_{k+1}$.  
\begin{figure}[htb]
\begin{multline*}
 \begin{matrix}
\includegraphics[scale=0.75]{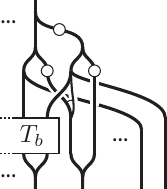}
\end{matrix}
\underset{\text{(bc)}}{=}
 \begin{matrix}
\includegraphics[scale=0.75]{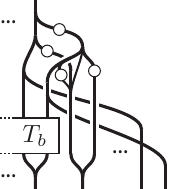}
\end{matrix}
=
 \begin{matrix}
\includegraphics[scale=0.75]{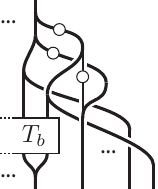}
\end{matrix}
= 
 \begin{matrix}
\includegraphics[scale=0.75]{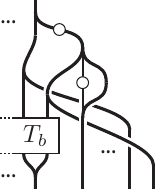}
\end{matrix}
=
 \begin{matrix}
\includegraphics[scale=0.75]{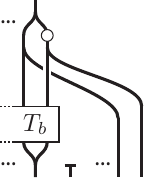}
\end{matrix}
=
 \begin{matrix}
\includegraphics[scale=0.75]{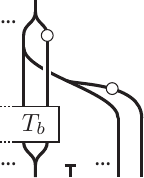}
\end{matrix}\ .
\end{multline*}
Therefore, \hfill{ }
\[
 \begin{matrix}
\includegraphics[scale=0.75]{plat71}
\end{matrix}
\ - \ 
 \begin{matrix}
\includegraphics[scale=0.75]{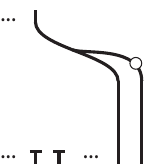}
\end{matrix}
\ = \ 
 \begin{matrix}
\includegraphics[scale=0.75]{plat76}
\end{matrix}
 \ - \ 
  \begin{matrix}
\includegraphics[scale=0.75]{plat77}
\end{matrix}\ .
\]
\caption{Invariance for $\sigma_{2k}\, b \longleftrightarrow b$.}
\label{fig:P4}
\end{figure}
Figure \ref{fig:P4} shows that $f\big(\Image (\widehat{T}_{\sigma_{2k}b}-\boldsymbol{\varepsilon}_{k+1})\big) = \Image (\widehat{T}_{b}-\boldsymbol{\varepsilon}_{k})$.  
We also show that $\ker f \subset \Image (\widehat{T}_{\sigma_{2k}b}-\boldsymbol{\varepsilon}_{k+1})$.  
Let $\bx \in \mathcal{T}_{k}$ and $\by \in \ker f$, then
Figure \ref{fig:P42} shows that $\widehat{T}_{\sigma_{2k}b}(\bx \otimes \by)=0$ since $\by \in \ker f$.  
\begin{figure}[htb]
\begin{multline*}
 \begin{matrix}
\includegraphics[scale=0.8]{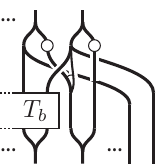}
\end{matrix}
=\ 
 \begin{matrix}
\includegraphics[scale=0.8]{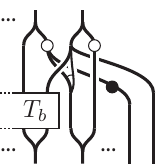}
\end{matrix}
=\ 
 \begin{matrix}
\includegraphics[scale=0.8]{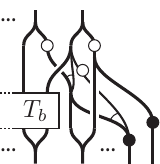}
\end{matrix}
=\ 
 \begin{matrix}
\includegraphics[scale=0.8]{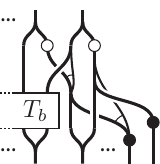}
\end{matrix}
\underset{\text{(bc)}}{=}
\\
\begin{matrix}
\includegraphics[scale=0.8]{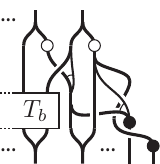}
\end{matrix}
\underset{\text{\bf HmR'}}{=}\ 
\begin{matrix}
\includegraphics[scale=0.8]{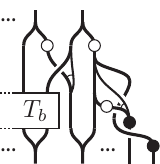}
\end{matrix}
=\ 
 \begin{matrix}
\includegraphics[scale=0.8]{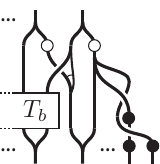}
\end{matrix}
=\ 
 \begin{matrix}
\includegraphics[scale=0.8]{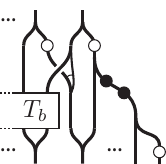}
\end{matrix}
\end{multline*}
\caption{Deformation of $\widehat{T}_{\sigma_{2k}b}$ to show that $\widehat{T}_{\sigma_{2k}b}(\bx\otimes \by)=0$ for $\by \in \ker f$.}
\label{fig:P42}
\end{figure}
Hence $(\widehat{T}_{\sigma_{2k}b} - \boldsymbol{\varepsilon_{k+1}})(\bx \otimes \by)
=
-\boldsymbol{\varepsilon_{k+1}}(\bx \otimes \by)
=
-\by$
and $\by$ is contained in the image of $\widehat{T}_{\sigma_{2k}b} - \boldsymbol{\varepsilon_{k+1}}$.
Since $f\big(\Image (\widehat{T}_{\sigma_{2k}b}-\boldsymbol{\varepsilon}_{k+1})\big) = \Image (\widehat{T}_{b}-\boldsymbol{\varepsilon}_{k})$ and $\ker f \subset \Image (\widehat{T}_{\sigma_{2k}b}-\boldsymbol{\varepsilon}_{k+1})$,  $\mathcal{T}_{k+1} / \Image(\widehat{T}_{b}-\boldsymbol{\varepsilon}_{k+1})$ is isomorphic to $\mathcal{T}_k/\Image(\widehat{T}_{b}-\boldsymbol{\varepsilon}_{k})$.  
\end{proof}
\begin{definition}
We call $\mathcal{A}_b$ {\it the universal space of representations} of $L$.   
\end{definition}
%
%
\subsection{Relation to closed braids}
Let  $L$ be a knot  isotopic to the closure of a braid $b$ contained in $B_k$.  
Let $A_b'$ be the module defined by $\mathcal{T}_k/\Image(T'_b-\boldsymbol{\varepsilon}_k)$ where $T'_b$ is the bottom tangle given by the braided Hopf diagram in Figure \ref{fig:closedbraid}.  
Then, the argument in \cite{MV} is valid for this bottom tangle case and $A'_b$ is an invariant of $L$.  
\begin{figure}[htb]
\[
T'_b \ :\ \  
 \begin{matrix}
\includegraphics[scale=1]{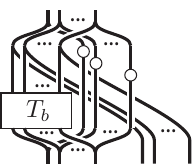}
\end{matrix}
\]
\caption{The bottom tangle $T'_b$ corresponding to the braid closure of $b$. }
\label{fig:closedbraid}
\end{figure}
\begin{thm}
If a knot $L$ is isotopic to the plat closure of a braid $b_1$ and is also isotopic to the braid  closure of $b_2$, then $\mathcal{A}_{b_1}$ and $\mathcal{A}'_{b_2}$ are isomorphic.  
\label{thm:braid}
\end{thm}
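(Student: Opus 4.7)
The plan is to reduce Theorem \ref{thm:braid} to Theorem \ref{thm:main} by constructing, for any $b_2 \in B_k$ whose braid closure is $L$, an element $\tilde b_2 \in B_{2k}$ whose plat closure is also $L$ and whose associated plat invariant satisfies $\mathcal{A}_{\tilde b_2} \cong \mathcal{A}'_{b_2}$. Once this is in hand, Theorem \ref{thm:main} applied to $b_1$ and $\tilde b_2$, which have isotopic plat closures, gives $\mathcal{A}_{b_1} \cong \mathcal{A}_{\tilde b_2} \cong \mathcal{A}'_{b_2}$.

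For the topological step, I would define $\tilde b_2 \in B_{2k}$ by placing $b_2$ on the odd-numbered strands, the identity on the even-numbered strands, and post-composing with a fixed ``turn-around'' braid that joins strand $2i-1$ at the top of $b_2$ to strand $2i$ at the bottom for each $i$. A direct diagrammatic check then shows that the plat caps of $\tilde b_2$ precisely reconstruct the closing loops of the braid closure of $b_2$, hence $\widehat{\tilde b_2}$ is isotopic to $L$.

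For the algebraic step, I would compare the two bottom tangles $\widehat T_{\tilde b_2}$ and $T'_{b_2}$ and show that they define the same ideal of $\mathcal{T}_k$ after quotienting. Starting from the diagrammatic expression of $\widehat T_{\tilde b_2}$, I would use the braided commutativity of $\ad$ (Figure \ref{fig:propad}) together with the algebra-automorphism property of flat bottom tangles (Proposition \ref{prop:algaut}) to push the coproducts and antipodes through the identity strands of the doubled braid $\tilde b_2$. The moves \textbf{HmL}, \textbf{HmR} and \textbf{Ha} of Proposition \ref{prop:deform} then allow the antipode-cap pairs coming from the even strands to cancel, leaving a braided Hopf diagram which, after removing tensor factors of trivial ribbons, coincides with $T'_{b_2}$. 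Since each step is either an ambient isotopy of bottom tangles or a move of Proposition \ref{prop:deform} that preserves the defining ideal of the quotient, the two ideals coincide and we obtain $\mathcal{A}_{\tilde b_2} \cong \mathcal{A}'_{b_2}$.

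The principal obstacle, as in the proof of Theorem \ref{thm:main}, is the diagram bookkeeping required by the algebraic step: one must track all the $\Psi$-braidings and $S$-nodes through the identification and verify that every local simplification is either an ambient isotopy or one of the permissible \textbf{H}-deformations. A secondary check is that the resulting isomorphism respects the $\Ad$-coinvariant structure on both sides; this should follow from Proposition \ref{prop:Ad}, which says that $\Ad$ commutes with every bottom tangle and hence with the comparison maps we construct.
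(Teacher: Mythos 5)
Your proposal is correct and follows essentially the same route as the paper: both reduce to Theorem \ref{thm:main} by constructing a doubled braid $b'_2\in B_{2k}$ (your $\tilde b_2$) whose plat closure is the braid closure of $b_2$, and then identifying the ideal defined by $\widehat{T}_{b'_2}$ with that defined by $T'_{b_2}$. The only difference is that the paper delegates this last identification to Proposition 4.10 of \cite{MV} rather than carrying out the Hopf-diagram manipulation you sketch.
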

\begin{proof}
Let $b'_2\in B_{2k}$ be the braid in Figure \ref{fig:closedbraid2} whose plat closure is equal to the braid closure of $b_2$.  
\begin{figure}[htb]
\[
b_2 \ : \ \ 
 \begin{matrix}
\includegraphics[scale=1]{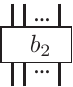}
\end{matrix}\in B_k
\ , 
\qquad
b'_2 \ : \ \ 
 \begin{matrix}
\includegraphics[scale=1]{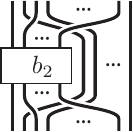}
\end{matrix}\in B_{2k}
\]
\caption{The braid $b'_2\in B_{2k}$ corresponding to $b_2 \in B_k$. }
\label{fig:closedbraid2}
\end{figure}
Then $\widehat{T}_{b'_2} \equiv T'_{b_2}$ modulo $\Image(T'_b-\boldsymbol{\varepsilon}_k)$ by Proposition 4.10 of \cite{MV} and so $\mathcal{A}_{b'_2}$ is isomorphic to $\mathcal{A}'_{b_2}$.   
Since $\mathcal{A}_{b_1}$ and $\mathcal{A}_{b'_2}$ are isomorphic, $\mathcal{A}_{b_1}$ and $\mathcal{A}'_{b_2}$ are also isomorphic. 
\end{proof}
%
%

For closed braids there is a conceptually simpler way of presenting the module. This will be important in matching up our approach to that of usual skein theory in the next section.
Instead of $J_{b}$ we could also consider the $\Ad$-comodule $\mathcal{J}_{b}$ of $\mathcal{T}_k$ given by the image of
\begin{equation}
\boldsymbol{\mu}^{(k)}\circ\big(\boldsymbol{\Psi}^{(k)}\big)^{-1}\circ\Big(id^{\otimes k}\otimes \big(T_b \circ\big(\boldsymbol{\theta}^{(k)}\big)^{-1}\circ \big(S^{-2}\big)^{\otimes k}\big)\Big) - \boldsymbol{\mu}^{(k)}
\label{eq:JJ}
\end{equation}
    where $\boldsymbol{\Psi}^{(k)}$ is the braiding of two bunches of $k$ strands and $\boldsymbol{\theta}^{(k)}$ is the positive full twist. The following theorem shows that $\mathcal{J}_{b}$ is isomorphic to $J_b$.
\begin{thm}
For any braid $b$ we have  $J_{b}\cong \mathcal{J}_{b}$ as $\Ad$-comodules of $\mathcal{T}_k$.
\end{thm}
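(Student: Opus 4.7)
The plan is to realize both $J_b$ and $\mathcal{J}_b$ as images of closely related bottom-tangle maps that differ only by an invertible ``framing correction,'' and then to check that the resulting bijection is $\Ad$-equivariant. Structurally, $\widehat{T}_b$ opens up a single bunch of $k$ strands into $2k$ strands by $\Delta^{\otimes k}$, acts by $T_b$, and then ``folds them back'' via the $k$ copies of $\mu\circ(\mathrm{id}\otimes S)$, which is literally the Hopf-diagram form of $k$ iterated conjugations (the $\mathrm{ad}$ pattern in Figure \ref{fig:coadjoint}). On the other hand, the map defining $\mathcal{J}_b$ opens two bunches of $k$ strands, applies $T_b$ to one of them (after an inverse full twist $\boldsymbol{\theta}^{(k)}{}^{-1}$ and an $(S^{-2})^{\otimes k}$ correction), braids the two bunches past each other by $(\boldsymbol{\Psi}^{(k)})^{-1}$, and finally multiplies them together by $\boldsymbol{\mu}^{(k)}$. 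Both are, morally, the ``conjugation of a braid action.''

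First, I would rewrite $\widehat{T}_b - \boldsymbol{\varepsilon}_k$ using the Hopf-diagram identities for $\mathrm{ad}$ in Figure \ref{fig:propad} together with the braided commutativity of bottom tangles established in the proposition after Figure \ref{fig:commutativity}. The $k$-fold fold-back $(\mu\circ(\mathrm{id}\otimes S))^{\otimes k}$ composed with the $\Psi$-shuffle in $\widehat{T}_b$ can be collected into a single braided multiplication $\boldsymbol{\mu}^{(k)}$ acting on two blocks of $k$ strands, with one block conjugated by the braid $T_b$. This conversion mirrors the plat-to-closed-braid comparison carried out in the proof of Theorem \ref{thm:braid} via Proposition 4.10 of \cite{MV}, which already handles the relation between $\widehat{T}_{b'}$ and $T'_b$ modulo the relevant ideal.

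Second, I would identify the discrepancy between the rewritten form of $\widehat{T}_b$ and the generator of $\mathcal{J}_b$ in (\ref{eq:JJ}). This discrepancy is precisely a full twist on $k$ strands together with the square of the antipode on each strand: unfolding a plat into a closed braid slides one bunch of $k$ strands through the other, and standard ribbon gymnastics shows that this sliding produces $\boldsymbol{\theta}^{(k)}$ together with $(S^{2})^{\otimes k}$. The insertions of $\boldsymbol{\theta}^{(k)}{}^{-1}$ and $(S^{-2})^{\otimes k}$ in (\ref{eq:JJ}) are exactly the inverses of these factors, so their presence converts the plat form into the closed-braid form. Since $\boldsymbol{\theta}^{(k)}$ and $S$ are invertible in the category $\mathcal{B}$, the two generator maps have the same image up to an invertible change of variables on the source $\mathcal{T}_{2k}\to\mathcal{T}_{2k}$, giving the $\mathcal{R}$-module isomorphism $J_b\cong\mathcal{J}_b$.

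Third, and most easily, I would verify that this isomorphism respects the $\Ad$-coaction. By Proposition \ref{prop:Ad}, $\Ad$ commutes with every bottom tangle, so any identification of submodules of $\mathcal{T}_k$ implemented by compositions, tensor products, braidings, antipodes, and twists is automatically $\Ad$-equivariant; applying this to the bijection constructed above finishes the proof. The main obstacle is the bookkeeping in the first two steps: tracking exactly where the $S^{\pm 2}$ and full-twist corrections appear when one pushes the $\Delta^{\otimes k}$ at the bottom of the plat diagram past the braid $T_b$ and then reassembles the fold-back with $S$ into a single $\boldsymbol{\mu}^{(k)}$. This is essentially a ribbon-category computation which is tedious but already present implicitly in \cite{MV}; the cleanest implementation is likely to do it diagrammatically in braided Hopf-diagram notation, one $\mathrm{ad}$ at a time, using the rightmost identity of the top row of Figure \ref{fig:propad} to bring the antipode squares together with the full twist.
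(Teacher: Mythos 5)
The paper does not actually prove this theorem: its ``proof'' is a one-line citation to \cite{MV}. So any real argument you give is necessarily a different route, and the right question is whether your sketch would close the gap the paper leaves open. Your overall strategy --- rewrite the plat-form generator as a closed-braid-form generator, absorb the discrepancy into invertible ribbon corrections ($\boldsymbol{\theta}^{(k)}$ and $S^{\pm 2}$), and get $\Ad$-equivariance for free --- is the expected one, and your step 3 is sound: since $\Ad\circ T = (T\otimes id)\circ\Ad$ for every bottom tangle $T$ (Proposition \ref{prop:Ad}), any identification of the two submodules of $\mathcal{T}_k$ is automatically compatible with the coaction.

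The genuine gap is in your step 2. You claim the two generating maps ``have the same image up to an invertible change of variables on the source.'' Precomposing a difference $F_1 - F_2$ with an invertible endomorphism $G$ of the source does preserve the image, but only when $G$ is applied to \emph{both} terms. In \eqref{eq:JJ} the correction $T_b\circ\big(\boldsymbol{\theta}^{(k)}\big)^{-1}\circ\big(S^{-2}\big)^{\otimes k}$ sits only inside the first term, while the subtracted term is the bare $\boldsymbol{\mu}^{(k)}$; on the plat side the subtracted term is $\boldsymbol{\varepsilon}_k=\varepsilon^{\otimes k}\otimes id^{\otimes k}$, which is not even the same kind of map as $\boldsymbol{\mu}^{(k)}$. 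In general $\Image(F_1\circ G - F_2)\neq\Image(F_1-F_2)$ (take $F_1=F_2=id$ and any $G\neq id$). So after the diagrammatic rewriting you must still show that the spans $\{\boldsymbol{\mu}^{(k)}\big((\boldsymbol{\Psi}^{(k)})^{-1}(\bx\otimes g(\by))\big)-\boldsymbol{\mu}^{(k)}(\bx\otimes\by)\}$ and $\{\widehat{T}_b(\bx\otimes\by)-\varepsilon(\bx)\,\by\}$ coincide; this requires a telescoping or substitution argument in the style of Lemma \ref{lem:skeingenerator}, exploiting that $T_b$ and the flat correction tangles are $\boldsymbol{\mu}$-multiplicative (Proposition \ref{prop:algaut}), not a single change of variables. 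A smaller point: in \eqref{eq:JJ} the braid $T_b$ acts on $k$ strands, so $b\in B_k$ and ``$J_b$'' must be read as the closed-braid ideal $\Image(T'_b-\boldsymbol{\varepsilon}_k)$, or as $J_{b'}$ for the associated plat $b'\in B_{2k}$ of Figure \ref{fig:closedbraid2}; your argument starts from the plat tangle $\widehat{T}_b$ and therefore silently also relies on Theorem \ref{thm:braid}.
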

\begin{proof}
This was proven in our previous paper, \cite{MV}.
\end{proof}

\subsection{$\Ad$ coinvariant space}
Let $b$ be a braid in $B_{2k}$.  
The image $\Ad(J_b)$ is contained in  
the image of 
$(\widehat{T}_b - \boldsymbol{\varepsilon_k})\otimes id$  
since $\Ad \circ(\widehat{T}_b - \boldsymbol{\varepsilon_k}) = \left((\widehat{T}_b - \boldsymbol{\varepsilon_k})\otimes id\right) \circ \Ad$ by Proposition \ref{prop:Ad}.  
Hence $\Ad$ induces a module map from $\mathcal{A}_b$ to $\mathcal{A}_b\otimes id$.
Let $\mathcal{A}_b^{\Ad}$ be the $\Ad$ coinvariantsubmodule of $\mathcal{A}_b$ defined by $\mathcal{A}_b^{\Ad} = \{x \in \mathcal{A}_b \mid \Ad(x) = x \otimes \eta\}$.  

\section{Skein algebras of  punctured disks}
In this section, we impose the Kauffman bracket skein relation to the algebra of bottom tangles $\mathcal{T}_k$.  
From now on, we assume that $\mathcal{R}$ is an integral domain with a distinguished invertible element $t$.  
%
\subsection{Kauffman bracket skein algebras}
We introduce a skein algebra based on the Kauffman bracket skein relation. 
\begin{definition}
Let 
\[
\mathcal S_{k,n} 
=
\mathcal{T}_{k,n}/{\sim}
\]
where $\sim$ is generated by the   Kauffman bracket skein relation and the circle relation, which are given by \eqref{eq:KBSR} and \eqref{eq:circle}.
Let $\mathcal{S}_k = \oplus_{n=1, 2, \cdots} \mathcal{S}_{k,n}$, then  $\mathcal{S}_k$ is an $\mathcal{R}$-algebra with the multiplication $\boldsymbol{\mu}$.  
We call $\mathcal{S}_{k}$ {\it the  skein algebra} of $D_k$.  
\end{definition}
%
\begin{align}
\text{Kauffman bracket skein relation :}
&\quad
\begin{matrix}
\epsfig{file=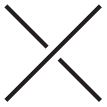, scale=0.6}
\end{matrix}
\ = \ 
t \, 
\begin{matrix}
\epsfig{file=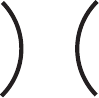, scale=0.6}
\end{matrix} + 
t^{-1} \,
\begin{matrix}
\epsfig{file=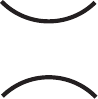, scale=0.6}
\end{matrix}
\label{eq:KBSR}
\\
\text{ Circle relation :}&\quad
\begin{matrix}
\epsfig{file=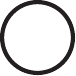, scale=0.8}
\end{matrix}
 = -(t^2 + t^{-2})
\label{eq:circle}
\end{align}
Note that the Kauffman bracket skein relation implies that
\[
\begin{matrix}
\epsfig{file=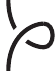, scale=0.8}
\end{matrix}
 = -t^3\ 
 \begin{matrix}
\epsfig{file=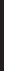, scale=0.8}
\end{matrix}\,
, 
\qquad
\begin{matrix}
\epsfig{file=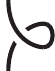, scale=0.8}
\end{matrix}
 = -t^{-3}\  
 \begin{matrix}
\epsfig{file=line, scale=0.8}
\end{matrix}\,
.
\]
%
The following are known for the skein algebra $\mathcal{S}_k$.   
\begin{thm}[\cite{PS}, Theorem 3, Corollary 4]
The skein algebra $\mathcal{S}_k$ is finitely generated, Noetherian, and an Ore domain.  
\end{thm}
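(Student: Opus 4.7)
The plan is to establish the three properties in sequence, as the later ones will rest on the earlier.

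For finite generation, I would exhibit an explicit finite set of generators for $\mathcal{S}_k$. Natural candidates in the punctured disk $D_k$ are the images in $\mathcal{S}_k$ of the elementary loops around each puncture (the $\alpha_i$ from Section 1) together with the small arcs enclosing pairs of adjacent punctures. Given any bottom tangle in $\mathcal{T}_{k,n}$, applying the Kauffman bracket skein relation \eqref{eq:KBSR} at every crossing expresses it as a linear combination of crossingless (flat) diagrams, and the circle relation \eqref{eq:circle} removes contractible closed components. Every resulting flat diagram can then be written as a product, under $\boldsymbol{\mu}$, of the chosen generators up to scalars, so $\mathcal{S}_k$ is finitely generated over $\mathcal{R}$.

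For Noetherianity, I would introduce a filtration on $\mathcal{S}_k$ by a complexity function $c$ on diagrams, for example the geometric intersection number with a fixed system of proper arcs cutting $D_k$ into simpler pieces. The point is that the skein relation replaces a crossing with two resolutions of strictly lower complexity, and the braiding used in $\boldsymbol{\mu}$ contributes only lower-order terms modulo this filtration. Consequently the associated graded algebra $\mathrm{gr}\,\mathcal{S}_k$ is commutative and, by the previous step, finitely generated over $\mathcal{R}$; Hilbert's basis theorem then makes it Noetherian. The standard lifting lemma for filtered algebras whose associated graded is Noetherian yields that $\mathcal{S}_k$ itself is Noetherian.

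For the Ore domain property, it suffices to prove that $\mathcal{S}_k$ is an integral domain, because any left and right Noetherian domain automatically satisfies the Ore condition (Goldie's theorem). To show absence of zero divisors, I would choose a basis of $\mathcal{S}_k$ consisting of isotopy classes of essential multicurves and pick a monomial ordering compatible with the filtration above so that the leading term of a product $xy$ equals, in $\mathrm{gr}\,\mathcal{S}_k$, the product of the leading terms of $x$ and $y$. Since $\mathrm{gr}\,\mathcal{S}_k$ is commutative and finitely generated with no torsion coming from the skein relation (the only correction to commutativity involves the invertible element $t$), it embeds into a polynomial ring over $\mathcal{R}$ and is therefore a domain. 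Hence $xy \neq 0$ whenever $x,y \neq 0$.

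The main obstacle will be the last step: setting up the basis of multicurves and the ordering precisely enough that the leading-term computation genuinely behaves multiplicatively, and verifying that no unexpected relations in $\mathrm{gr}\,\mathcal{S}_k$ produce zero divisors. Once the filtration and its associated graded are under control, finite generation and Noetherianity are essentially formal, and the Ore condition is then automatic from the Noetherian-domain property.
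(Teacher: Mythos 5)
This theorem is not proved in the paper at all: it is quoted from Przytycki--Sikora \cite{PS} (their Theorem 3 and Corollary 4), so there is no internal argument to compare yours against. What you have written is, in substance, a reconstruction of the strategy of that cited proof: finite generation by resolving crossings via \eqref{eq:KBSR} and \eqref{eq:circle} and reducing flat diagrams to products of a finite list; Noetherianity via a filtration whose associated graded is commutative and finitely generated, plus the standard lifting lemma; no zero divisors via a basis of multicurves whose products have a unique leading term with invertible coefficient; and the Ore condition from Goldie's theorem for a two-sided Noetherian domain. That is the right route, and the logical ordering (finite generation $\Rightarrow$ Noetherian, domain $+$ Noetherian $\Rightarrow$ Ore) is exactly how the literature proceeds.

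Two points keep this from being a proof. First, essentially all of the content is concentrated in the step you defer to the end: that with respect to a suitable ordering the product of two basis elements has a \emph{single} maximal term whose coefficient is $\pm t^m$. The assertion that $\mathrm{gr}\,\mathcal{S}_k$ ``embeds into a polynomial ring'' presupposes this; in \cite{PS} establishing it (via Dehn--Thurston-type coordinates on multicurves) is the theorem, not a formality. Second, the $\mathcal{S}_k$ of this paper is not the closed-curve skein algebra of a surface: it contains arcs (bottom tangles with endpoints on the base line) and its product is the stacking product $\boldsymbol{\mu}$, so your basis must consist of multicurves-with-arcs and your generating set must include the arc classes $\alpha_i$ together with generators of the closed part $\mathcal{S}_{k,1}^{\trace}$ --- the paper's Lemmas \ref{lem:reductionformula} and \ref{lem:generator} and Propositions \ref{prop:span}, \ref{prop:spantrace} and \ref{prop:tensor} supply exactly the reduction data you would need here (note that Proposition \ref{prop:spantrace} additionally requires $t^2+t^{-2}$ to be invertible). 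With those two gaps filled, your plan coincides with the proof the authors are citing.
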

%
%
%
\subsection{Quantum trace of $\mathcal{S}_{k,1}$}
\begin{definition}
For $x \in \mathcal{S}_{k,1}$, $\trace(x)$ is defined by
\begin{equation}
\trace(x) = -t^{2} \, x - t^{-2}\, (x \circ S).  
\label{eq:trace}
\end{equation}
\end{definition}
Let $x$ be an element of $\mathcal{S}_{k,1}$ corresponding to a bottom tangle $T$  in $\mathcal{T}_{k,1}$.  
Then $x\circ S$ is given by a bottom tangle in Figure \ref{fig:trace}.  
By applying the skein relation to $T \circ S$, 
it is decomposed as in  Figure \ref{fig:trace}, 
and $\trace(x)$ corresponds to the middle diagram of the figure. 
\begin{figure}[htb]
\[
\begin{matrix}
\begin{matrix}
\epsfig{file=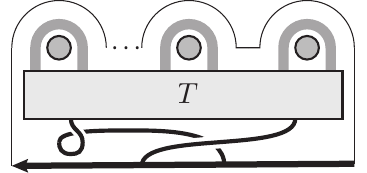, scale=0.6}
\end{matrix}
& = &
-t^{2} &
\begin{matrix}
\epsfig{file=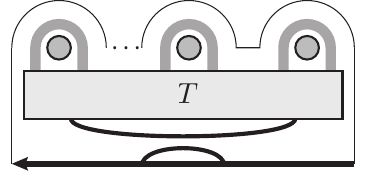, scale=0.6}
\end{matrix}
& 
- t^{4}&
\begin{matrix}
\epsfig{file=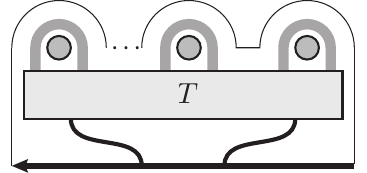, scale=0.6}
\end{matrix}
\\
x \circ S & & & \trace(x) & & x
\end{matrix}
\]
\caption{$x \circ S$ and $\trace(x)$ for $x \in \mathcal{S}_{k,1}$.}
\label{fig:trace}
\end{figure} 
\begin{prop}
For $x \in \mathcal{S}_{k,1}$, $\trace(x)$ is $\Ad$ coinvariant.  
\end{prop}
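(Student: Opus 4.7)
The statement to prove is $\Ad\circ \trace(x) = \trace(x)\otimes \eta$ in $\mathcal S_{k+1,1}$ for every $x\in \mathcal S_{k,1}$. By linearity it suffices to take $x$ represented by a single bottom tangle $T\in \mathcal T_{k,1}$ with one ribbon, so I will argue in $\mathcal T_{k+1,1}$ modulo the Kauffman bracket skein relation.

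\textbf{Step 1: Diagrammatic form of $\Ad\circ T$.} Unwinding the definition of $\Ad$ from $\ad=\mu_2\circ\Psi_1\circ(S\otimes\Delta)\circ\Delta$ and iterating, $\Ad\circ T$ is obtained from $T$ by attaching a conjugating arc issuing from the new $(k+1)$-th puncture that wraps once around the single ribbon of $T$. Locally, this creates exactly two crossings of the new arc with $T$'s ribbon, one passing over and one passing under. The diagram $\Ad\circ(T\circ S)$ is the same picture with $T$'s ribbon reversed by the antipode, which swaps the over/under pattern of the two crossings and toggles the local framing.

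\textbf{Step 2: Local skein resolution.} I apply the Kauffman bracket skein relation at each of the two crossings of the conjugating arc with the ribbon. Each crossing splits into a vertical resolution (weight $t^{\pm 1}$) and a horizontal cap--cup resolution (weight $t^{\mp 1}$), giving four terms in $\Ad\circ T$: one in which both resolutions are vertical and the conjugating arc separates from the ribbon into a closed loop, collapsing by \eqref{eq:circle} and contributing a scalar multiple of $T\otimes \eta$; two mixed terms in which a single cap--cup forces a framing twist on the ribbon, and which by the twist identities $-t^{\pm 3}$ reduce to scalar multiples of $T$ or $T\circ S$; and one in which both resolutions are cap--cups, producing an isolated ribbon with a full twist. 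The same resolution applied to $\Ad\circ(T\circ S)$ produces the mirror list of terms, with the twist weights $-t^{\pm 3}$ exchanged.

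\textbf{Step 3: Cancellation.} Assemble the combination $-t^2\,\Ad\circ T - t^{-2}\,\Ad\circ(T\circ S)$. The coefficients $-t^{\pm 2}$ are chosen precisely so that, when multiplied against the framing factors $-t^{\pm 3}$ from the twist identities, the mixed terms (those proportional to $T$ inside an $\Ad\circ(T\circ S)$ resolution and vice versa) cancel pairwise against their counterparts. What remains is exactly $-t^2\,T\otimes \eta - t^{-2}\,(T\circ S)\otimes \eta = \trace(x)\otimes \eta$, which is the claimed coinvariance.

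\textbf{Main obstacle.} The principal difficulty is the bookkeeping of the eight local resolution terms, together with the framing shifts produced by each cap--cup, and verifying that the pairings cancel with the correct signs. Conceptually, this is the manifestation of the quantum-trace identity inside the Kauffman bracket ribbon category: the ribbon element weights $-t^{\pm 2}$ in $\trace$ are exactly the ones that annihilate the twist contributions $-t^{\pm 3}$, making $\trace$ ad-invariant by design. Once the local picture is correctly identified, nothing else about $T$ enters the argument, and Proposition \ref{prop:Ad} ensures the reduction propagates globally.
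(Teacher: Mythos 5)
Your high-level intuition (the coefficients $-t^{\pm2}$ in $\trace$ are exactly what make the conjugating strand detach) is the right one, but the local picture you build the computation on does not match what $\Ad\circ T$ actually looks like, and the mechanism you invoke to remove the conjugating arc is not available. In the paper, $\Ad$ is exhibited as a \emph{crossingless} bottom tangle (Figure \ref{fig:Ad}), and $\Ad\circ T=(T\otimes id)\circ\Ad$ is an isotopy (Figure \ref{fig:Adcommute}): the effect of $\Ad$ on a one-ribbon tangle with word $w$ is simply $\alpha_{k+1}^{-1}\,w\,\alpha_{k+1}$, and the two excursions around the new puncture do \emph{not} cross the ribbon of $T$. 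So there are no ``two crossings of the new arc with $T$'s ribbon, one over and one under'' to resolve, and Steps 2--3 have nothing to act on. The obstruction to $\Ad(x)=x\otimes\eta$ for general $x$ is not crossings but the fact that the two excursions $\alpha_{k+1}^{\mp1}$ are separated \emph{along the arc} by the word of $T$ and hence cannot be cancelled by isotopy. Moreover, even in a presentation where the conjugating strand did link the ribbon, your ``both vertical'' term cannot ``collapse by \eqref{eq:circle}'': any closed loop formed from the conjugating arc encircles the new puncture $q_{k+1}$, so it represents $x_{k+1}$, not a null-homotopic circle, and the circle relation does not apply. Finally, the eight-term cancellation in Step 3 is asserted rather than verified, and you acknowledge as much in your closing paragraph.

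The missing idea, which is the whole content of the paper's proof, is that $\trace(x)$ is represented by a \emph{single closed diagram}: one application of the Kauffman bracket relation at the kink introduced by the antipode (Figure \ref{fig:trace}) shows $x\circ S=-t^{2}\,\trace(x)-t^{4}\,x$, i.e.\ $\trace(x)$ equals the partial closure of $T$ into a closed component together with a trivial open ribbon. Once this is known, coinvariance is pure isotopy: substituting the closed diagram into the identity of Figure \ref{fig:Adcommute}, the two conjugating excursions around $q_{k+1}$ become adjacent along the now-trivial open ribbon and cancel, $\alpha_{k+1}^{-1}\alpha_{k+1}=1$, giving $\Ad(\trace(x))=\trace(x)\otimes\eta$ (Figure \ref{fig:Adtrace}). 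Equivalently, in your notation: resolve the antipode crossing \emph{inside} $\Ad(x\circ S)$ to get $\Ad(\trace(x))=\Ad(\text{closed diagram})$, then conclude by isotopy. I recommend restructuring your argument along these lines; the direct crossing-by-crossing expansion you propose cannot be completed as written.
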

\begin{proof}
Let $x$ be an element of $\mathcal{S}_{k, 1}$ which corresponds to a bottom tangle $T$.  
By substituting the diagram of $\trace(x)$ in Figure \ref{fig:trace} to Figure \ref{fig:Adcommute}, we get  $\Ad(\trace(x)) = \trace(x) \otimes \eta$ as in Figure \ref{fig:Adtrace}.  
\end{proof}
\begin{figure}[htb]
\[
\begin{matrix}
\raisebox{1.5mm}{$\begin{matrix}
\epsfig{file=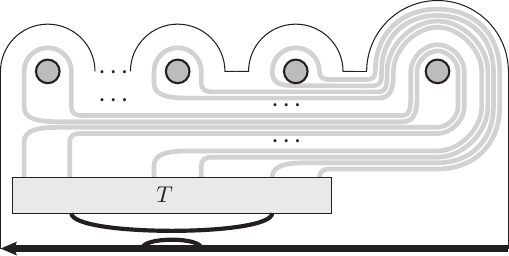, scale=0.7}
\end{matrix}$}
\ = \ 
\begin{matrix}
\epsfig{file=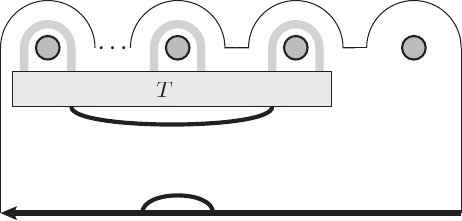, scale=0.7}
\end{matrix}
\\
\Ad(\trace(x))
\end{matrix}
\]
\caption{Graphical proof for $\Ad(\trace(x)) = \trace(x) \otimes \eta$.}
\label{fig:Adtrace}
\end{figure}
Let
$\mathcal{S}_{k, 1}^{\trace}$
be the image of $\trace$.
\subsection{Structure of $\mathcal{S}_{k,1}$}
The skein module $\mathcal{S}_{k,1}$ has an $\mathcal{R}$-algebra structure with the multiplication $m(x \otimes y) = \boldsymbol{\mu}\circ( x \otimes y) \circ \Delta$, and is a two-sided $\mathcal{S}_{k,1}^{\trace}$-module.  
Let $\alpha_i$ be the bottom tangle introduced in Figure \ref{fig:alpha}.  
\begin{lemma}
\label{lem:reductionformula}
The elements $\alpha_1$, $\alpha_2$, $\cdots$, $\alpha_k$ satisfy the following relations.  
\begin{align*}
\alpha_i^2 &= -t^{-4}-t^{-2}\, x_i\, \alpha_i ,
\\
\alpha_i^{-1} &= -t^{2}\, x_i-t^{4}\, \alpha_i,
\\
\alpha_j\, \alpha_i &= 
-t^{-4}\, x_i\, x_j- t^{-6} \, x_{ij}- 
t^{-2}\,  x_j\, \alpha_i  - t^{-2}\, x_i\, \alpha_j 
 -  t^{-4} \, \alpha_{ij} 
 \quad(i < j).
\end{align*}
\end{lemma}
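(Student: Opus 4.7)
The plan is to verify each of the three identities by a direct skein computation in $\mathcal{S}_{k,1}$, using the definition of $\trace$ in equation \eqref{eq:trace}, the Kauffman bracket skein relation \eqref{eq:KBSR} for the crossings introduced by the braided product $m$, and the centrality in $\mathcal{S}_{k}$ of closed loops encircling a single puncture.

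The simplest of the three is the formula for $\alpha_i^{-1}$. By convention $\alpha_i^{-1}=S(\alpha_i)=\alpha_i\circ S$. Rewriting the definition $\trace(x)=-t^{2} x-t^{-2}(x\circ S)$ as $x\circ S=-t^{2}\,\trace(x)-t^{4}x$ and specializing to $x=\alpha_i$ (so that $\trace(\alpha_i)=x_i$ by definition) yields $\alpha_i^{-1}=-t^{2}x_i-t^{4}\alpha_i$. Pictorially this is the decomposition already displayed in Figure \ref{fig:trace}.

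The squaring identity follows by an algebraic manipulation. Multiplying the formula for $\alpha_i^{-1}$ by $\alpha_i$ on the left under the product $m$ and using $\alpha_i\,\alpha_i^{-1}=\epsilon$ gives
\[
\epsilon \;=\; -t^{2}\,\alpha_i\, x_i-t^{4}\,\alpha_i^{2}.
\]
Since $x_i=\trace(\alpha_i)$ is represented by the simple closed curve encircling the $i$-th puncture, it is disjointly isotopic from every other tangle and hence central in $\mathcal{S}_{k,1}$; therefore $\alpha_i x_i=x_i\alpha_i$, and one may solve for $\alpha_i^{2}$ to obtain $\alpha_i^{2}=-t^{-4}-t^{-2}x_i\,\alpha_i$.

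The third relation requires genuine skein work. I would expand $\alpha_j\alpha_i=\boldsymbol{\mu}\circ(\alpha_j\otimes\alpha_i)\circ\Delta$ as a concrete bottom tangle: it consists of two ribbons, one enclosing puncture $j$ and the other puncture $i$, together with the crossings generated by the braidings in $\boldsymbol{\mu}$ (and, where needed, in $\Delta$). I would then resolve every crossing by \eqref{eq:KBSR} and identify the non-crossing summands. A ribbon enclosing only the $i$-th puncture yields $\alpha_i$, a ribbon enclosing only the $j$-th puncture yields $\alpha_j$, and a ribbon enclosing both yields $\alpha_{ij}$. Each closed component encircling one or both punctures contributes the corresponding quantum trace $x_i$, $x_j$, or $x_{ij}=\trace(\alpha_{ij})$, and each kink contributes a framing factor $-t^{\pm 3}$. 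Matching these five classes of terms against the right-hand side of (iii) yields the stated formula.

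The main obstacle is the bookkeeping in the third case. Since the braided product between $\alpha_j$ and $\alpha_i$ involves several crossings, the number of resolved diagrams grows combinatorially, and framing contributions, closed-versus-open components, and the sign choices from the braiding convention must be tracked with care. I expect to use the braided commutativity displayed in Figure \ref{fig:propad} together with the $\Ad$-coinvariance of the traces $x_i,x_j,x_{ij}$ to reorganise intermediate expressions into five cleanly identifiable pieces, after which matching coefficients is routine. The resulting identity is recognisably a $q$-deformation of the classical Fricke relation $\mathrm{tr}(AB)+\mathrm{tr}(AB^{-1})=\mathrm{tr}(A)\mathrm{tr}(B)$ for $\mathrm{SL}(2)$-characters.
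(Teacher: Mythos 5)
Your proposal is correct and follows essentially the same route as the paper, whose entire proof is the single assertion that "these formulas are directly obtained from the skein relation": your first two identities amount to unwinding the definition of $\trace$ (itself one application of the Kauffman bracket relation to the kink in $\alpha_i\circ S$, as displayed in Figure \ref{fig:trace}) together with $\alpha_i\,\alpha_i^{-1}=\epsilon$ and the centrality of $x_i$, and your third is exactly the direct crossing-resolution the paper has in mind. The only caveat is that the coefficient bookkeeping for $\alpha_j\,\alpha_i$ is outlined rather than executed, but the paper itself supplies no more detail there either.
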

These formulas are directly obtained from the skein relation.  
\begin{lemma}
$\mathcal{S}_{k,1}$ is spanned by words of $\alpha_1$, $\alpha_2$, $\cdots$, $\alpha_k$.  
\label{lem:generator}
\end{lemma}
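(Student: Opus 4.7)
The plan is to argue by induction on the number of self-crossings of a diagram representing an element of $\mathcal{S}_{k,1}$, using the Kauffman bracket skein relation to eliminate crossings and Lemma~\ref{lem:reductionformula} to control what the smoothings produce.

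First I would establish the base case: a representative with a crossingless projection to $D_k$ is an embedded arc from $p_0$ back to $p_0$, which together with a segment of $\partial D_k$ cobounds a sub-disk containing some ordered subset of the punctures $q_{i_1},\ldots,q_{i_n}$. Pushing the arc against $\partial D_k$ by a planar isotopy and restoring the bottom tangle heights realises this tangle as the standard ribbon that hugs the boundary and loops once around each enclosed puncture in turn. Unwinding the definition $m(x\otimes y)=\boldsymbol{\mu}(x\otimes y)\circ\Delta$, iterating the product geometrically produces precisely that ribbon with the correct blackboard framing, so the tangle equals $\alpha_{i_1}\alpha_{i_2}\cdots\alpha_{i_n}$.

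For the inductive step, I would resolve a self-crossing by the Kauffman bracket relation, producing two smoothings each with one fewer crossing. One smoothing keeps the diagram a single ribbon and the inductive hypothesis applies directly. The other may split the ribbon into a single ribbon plus a closed component; that closed component is isotoped to a level below the remaining ribbon and reduced in $\mathcal{S}_{k,0}$ to a linear combination of simple loops around subsets of punctures. Each simple loop around puncture $i$ coincides, after being opened at the basepoint, with $\trace(\alpha_i)=-t^{2}\alpha_i-t^{-2}\alpha_i^{-1}$, which already lies in the span of words of the $\alpha_i$ by the second relation of Lemma~\ref{lem:reductionformula}. The reduction formulas of that lemma then absorb all mixed products into words in the $\alpha_i$.

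The main obstacle will be the bookkeeping around closed components produced by horizontal smoothings: pulling such a component past the remaining ribbon may introduce fresh crossings between the loop and the ribbon. The induction must therefore be carried out on a composite complexity measure, for instance the lexicographic pair (number of crossings, number of closed components), so that each elementary move strictly decreases the measure and termination is guaranteed. This parallels the reduction argument underlying the finite generation result of Przytycki-Sikora cited just above.
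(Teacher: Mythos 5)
Your overall strategy---induction on crossing number via the Kauffman bracket relation, splitting off closed components, and feeding everything into Lemma~\ref{lem:reductionformula}---is essentially the paper's, which reduces to crossingless diagrams and then analyses those. The gap is in your base case. You claim that a crossingless arc from $p_0$ to $p_0$ is determined up to isotopy by the subset of punctures it encloses, and hence equals the standard boundary-hugging ribbon $\alpha_{i_1}\cdots\alpha_{i_n}$ with $i_1<\cdots<i_n$. This is false: embedded arcs in a punctured disk with fixed endpoints are classified by their homotopy class, not by the partition of the punctures they induce, and infinitely many isotopy classes enclose any given subset (the braid group acts on them with infinite orbits). For instance, the image of the standard arc around $q_1$ under the half-twist $\sigma_1$ is a crossingless arc enclosing a single puncture whose class is a nontrivial conjugate of a generator, hence not isotopic to any $\alpha_{i_1}\cdots\alpha_{i_n}$; the paper's own Figure~\ref{fig:word} exhibits a crossingless ribbon whose word is $\alpha_1\alpha_2\alpha_1^{-1}\alpha_3$, which is not of your claimed form. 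The same objection applies to your closed components: simple closed curves in $D_k$ are likewise not classified by the set of punctures they enclose (apply a Dehn twist about a transverse curve), so a crossingless loop need not be a round loop around a subset, and in particular need not be $\trace(\alpha_i)$ for a single $i$.

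The correct base case, and what the paper actually does, is weaker but sufficient: follow the crossingless arc from one endpoint and record the sequence of punctures it winds around together with signs, obtaining a general word in $\alpha_1^{\pm1},\dots,\alpha_k^{\pm1}$ times a power of $-t^3$ from the framing twists; similarly each crossingless closed loop is $\trace$ of such a word. Since $\alpha_i^{-1}=S(\alpha_i)$ lies in the span of $\{1,\alpha_i\}$ over $\mathcal{S}_{k,1}^{\trace}$ by Lemma~\ref{lem:reductionformula}, this still yields the lemma; but as written your argument assigns the wrong element to the arc and would prove the false stronger statement that only increasing positive words occur. Your concern about the complexity measure is, by contrast, a non-issue: once the diagram is completely crossingless, the closed components are disjoint from the open ribbon in the projection and can be pushed to lower levels without creating new crossings, so induction on the crossing number alone suffices.
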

\begin{proof}
By the Kauffman bracket skein relation,  $\mathcal{S}_{k,1}$ is spanned by skein diagrams without crossing.  
For a skein diagram $D\in \mathcal{S}_{k,1}$ having no crossing point, $D = D_0^{(1)}D_0^{(2)}\cdots D_0^{(l)}D_1$ where $D_0^{(j)}$ is an element of  $\mathcal{S}_{k,1}^{\trace}$ consisting of one closed ribbon and $D_1$ consists of one ribbon without crossing point.  
Then $D_1$ are expressed by a word of $\alpha_1$, $\alpha_2$, $\cdots$, $\alpha_k$ times $(-t^3)^\delta$ where $\delta$ comes from the twists in the diagram corresponding to the word.  
Note that 
$\alpha_i^{-1}=S(\alpha_i)$
since
$S(\alpha_i)$ is the inverse of $\alpha_i$ in $\mathcal{S}_{k,1}$.   
For example, let $D_1$ be a skein diagram in Figure \ref{fig:word}.  
\begin{figure}[htb]
\begin{align*}
D_1 &= 
\begin{matrix}
\epsfig{file=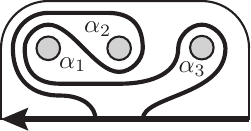, scale=0.8}
\end{matrix}\ ,
\\
\alpha_1\,\alpha_2\, \alpha_1^{-1}\,\alpha_3
&=
\begin{matrix}
\epsfig{file=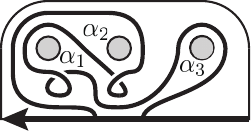, scale=0.8}
\end{matrix}
=
\begin{matrix}
\epsfig{file=freearcs40, scale=0.8}
\end{matrix}
=
 D_1.
\end{align*}
\caption{The word corresponding to a ribbon without crossing point.}
\label{fig:word}
\end{figure}
Starting from the left end point of the ribbon of $D_1$, and see how  the ribbon wind around the punctures.  
The ribbon first turn around the leftmost puncture clockwise, and it corresponds to $\alpha_1$.  
Next, the ribbon turn around the middle puncture clockwise, and it corresponds to $\alpha_2$.  
Then the ribbon turn around the leftmost puncture counterclockwise, and it corresponds to $\alpha_1^{-1}$. 
Lastly, the ribbon turn around the rightmost puncture clockwise, and it corresponds to $\alpha_3$. 
So the word corresponding to $D_1$ is $\alpha_1^{-1}\,\alpha_2\, \alpha_1^{-1}\,\alpha_3$.  
In this case, $\delta$ corresponding to the twists in the right diagram of Figure \ref{fig:word} is $0$ since the two twists in the diagram cancel each other.  
For any skein diagram in $\mathcal{S}_{k, 1}$ with one component and without crossing point, we can assign a word as the above example.  
\par
For $D_0^{(j)}$, there is a flat skein diagram $E_j \in \mathcal{S}_{k,1}$ such that $D_0^{(j)} = -t^2 \, E_j - t^{-2}\, (E_j \circ S)$ and $E_j$ is expressed by a word of $\alpha_1$, $\alpha_2$, $\cdots$, $\alpha_k$ times $(-t^3)^\delta$ as $D_1$.  
Hence $\mathcal{S}_{k,1}$ is spanned by words of $\alpha_1$, $\alpha_2$, $\cdots$, $\alpha_k$ as an $\mathcal{S}_{k,1}^{\trace}$-module.  
\end{proof}
Let  
\[
\alpha_{i_1i_2\cdots i_j} = \alpha_{i_1}\alpha_{i_2}\cdots\alpha_{i_j}
, \qquad
x_{i_1i_2\cdots i_j} = \trace(\alpha_{i_1i_2\cdots i_j})
\]
for $i_1 < i_2 < \cdots < i_j$.
%
Then Lemma \ref{lem:generator} and Lemma \ref{lem:reductionformula} imply the following. 
\begin{prop}
\label{prop:span}
$\mathcal{S}_{k,1}$ is spanned by $\alpha_{i_1i_2\cdots i_j}$  for $i_1 < i_2 < \cdots < i_j$ as a left 
$\mathcal{S}_{k,1}^{\trace}$-module.  
\end{prop}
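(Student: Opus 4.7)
The plan is to combine Lemma \ref{lem:generator} with the three reduction identities of Lemma \ref{lem:reductionformula} to rewrite an arbitrary word in the $\alpha_i$'s as a left $\mathcal{S}_{k,1}^{\trace}$-linear combination of strictly increasing products. By Lemma \ref{lem:generator}, every $D\in\mathcal{S}_{k,1}$ is already a sum, with coefficients in $\mathcal{S}_{k,1}^{\trace}$, of words in the letters $\alpha_i^{\pm 1}$, so it suffices to rewrite each such word.

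First I would eliminate inverses: the identity $\alpha_i^{-1}=-t^{2}x_i-t^{4}\alpha_i$, together with $x_i=\trace(\alpha_i)\in\mathcal{S}_{k,1}^{\trace}$, replaces every $\alpha_i^{-1}$ with an element of the left $\mathcal{S}_{k,1}^{\trace}$-span of $\{1,\alpha_i\}$. Next, using $\alpha_i^{2}=-t^{-4}-t^{-2}x_i\alpha_i$, I reduce any consecutive repetition $\alpha_i\alpha_i$ in a positive word, obtaining a sum of shorter positive words. After finitely many such reductions we may assume each word is of the form $\alpha_{j_1}\alpha_{j_2}\cdots\alpha_{j_m}$ with no adjacent repetitions.

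The core step is to put the indices in increasing order, and for this I would use the third identity of Lemma \ref{lem:reductionformula}. For $i<j$ it reads
\[
\alpha_j\,\alpha_i \;=\; -t^{-4}\,\alpha_i\alpha_j \;+\; R_{ij},
\]
where $R_{ij}= -t^{-4}x_ix_j-t^{-6}x_{ij}-t^{-2}x_j\alpha_i-t^{-2}x_i\alpha_j$ lies in the left $\mathcal{S}_{k,1}^{\trace}$-span of $\{1,\alpha_i,\alpha_j\}$, hence has strictly smaller $\alpha$-length than $\alpha_j\alpha_i$. So one swap of an adjacent out-of-order pair produces the sorted pair (with coefficient in $\mathcal{S}_{k,1}^{\trace}$) plus shorter-length corrections. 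Order the set of words by the lexicographic pair $(\text{length},\ \text{number of inversions})$; every application of the swap or of the squaring relation strictly decreases this invariant, so an induction on $(\text{length},\text{inversions})$ terminates. After each sorting step I may of course introduce new adjacent repetitions and so must interleave Steps~2 and~3, but as the combined invariant still decreases the procedure halts, yielding an expression in the claimed form.

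The main obstacle I anticipate is bookkeeping: I have to verify that the inductive hypothesis can be applied to the correction terms $R_{ij}$ and to the outputs of the squaring reduction without circularity. This is handled by the well-ordering above, since all correction terms live in lower levels of the filtration by $\alpha$-length. A minor subtlety is that the coefficients produced during reductions are automatically in $\mathcal{S}_{k,1}^{\trace}$ because they are built from the quantum traces $x_{i}, x_{ij}$, and this is exactly what allows us to absorb them into the left module structure without leaving the span.
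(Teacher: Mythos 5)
Your argument is correct and is essentially the paper's own proof: the paper simply asserts that Lemma \ref{lem:generator} and Lemma \ref{lem:reductionformula} together imply the proposition, and your straightening procedure (eliminate inverses, reduce repeated letters, sort adjacent out-of-order pairs modulo lower $\alpha$-length, with termination controlled by the length/inversion order) is exactly how those two lemmas combine. The one point to watch --- which the paper also leaves implicit --- is that the coefficients $x_i$, $x_{ij}$ created in the \emph{middle} of a word must be moved to the far left to land in the left $\mathcal{S}_{k,1}^{\trace}$-span; this uses centrality of the quantum trace elements, which the paper verifies only for the single-index traces $x_j$ in general (and for $x_{12}$ in the two-puncture examples), so your ``absorb into the left module structure'' step deserves an explicit justification for the multi-index traces.
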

\subsection{Generators of $\mathcal{S}_{k,1}^{\trace}$}
The $\mathcal{R}$-algebra $\mathcal{S}_{k,1}^{\trace}$ is generated by $x_{i_1\cdots i_j}$.  

\begin{prop}
\label{prop:spantrace}
Assuming we can invert $(t^2+t^{-2})$ in $\mathcal{R}$,
the algebra $\mathcal{S}_{k,1}^{\trace}$ is generated as an $\mathcal{R}$-algebra by $x_{i_1\cdots i_j}$ for $1 \leq i_1 < \cdots < i_j$, $1 \leq j \leq 3$.
\end{prop}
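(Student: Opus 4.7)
The strategy is to show, by induction on $j \geq 4$, that every generator $x_{i_1 i_2 \cdots i_j}$ can be expressed as an $\mathcal{R}$-polynomial in the $x_I$ with $|I| \leq 3$. By Lemma \ref{lem:generator} and Proposition \ref{prop:span}, $\mathcal{S}_{k,1}^{\trace}$ is spanned (as an $\mathcal{R}$-module) by the $x_{i_1\cdots i_j}$ for ordered index sequences $i_1 < \cdots < i_j$, so this inductive reduction will suffice.

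For the base case $j=4$, the main tool is the product of two closed loops in the skein algebra. Consider $x_{i_1 i_3} \cdot x_{i_2 i_4}$: in the disk $D_k$, the simple loop encircling $\{i_1, i_3\}$ and the simple loop encircling $\{i_2, i_4\}$ are forced to intersect (since $i_2$ lies between $i_1$ and $i_3$ while $i_3$ lies between $i_2$ and $i_4$). Applying the Kauffman bracket skein relation \eqref{eq:KBSR} at each of the transverse intersections resolves the product into a linear combination of closed multicurves, which can then be identified (after possibly using \eqref{eq:circle} on contractible components and the reduction formulas of Lemma \ref{lem:reductionformula}) with expressions involving $x_{i_1 i_2 i_3 i_4}$ (and cyclic variants) together with products of $x_I$ for $|I|\leq 3$. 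Repeating the analysis with $x_{i_1 i_2}\cdot x_{i_3 i_4}$ and $x_{i_1 i_4}\cdot x_{i_2 i_3}$ yields a linear system in the degree-4 traces; solving this system expresses $x_{i_1 i_2 i_3 i_4}$ explicitly in terms of $x_I$ with $|I|\leq 3$. The hypothesis that $t^2+t^{-2}$ is invertible enters precisely here, to invert the coefficient arising from the combined skein resolutions (this is the $q$-analogue of the invertible factor $2$ appearing in the classical Fricke identity for $SL_2$).

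For the inductive step $j \geq 5$, the same device works with an interleaved product: take any pair of index sets $I, J$ with $I\cup J=\{i_1,\ldots,i_j\}$ chosen so that the loops $x_I$ and $x_J$ must cross in $D_k$ (for instance, $I = \{i_1,i_3,i_5,\ldots\}$ and $J=\{i_2,i_4,i_6,\ldots\}$ works). Resolving the crossings via \eqref{eq:KBSR} and applying Lemma \ref{lem:reductionformula} produces an identity in which $x_{i_1 i_2\cdots i_j}$ appears with an invertible leading coefficient, together with terms that are products of strictly shorter $x_L$. By the inductive hypothesis, each of these shorter traces already lies in the subalgebra generated by $x_I$ with $|I|\leq 3$, completing the induction.

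The main obstacle is the explicit bookkeeping in the skein computations: one must track the multiple smoothings at each crossing, and verify that after gathering all contributions, the coefficient of $x_{i_1\cdots i_j}$ reduces to a power of $t$ times an integer multiple of a power of $(t^2+t^{-2})$. The classical Fricke--Vogt theorem guarantees that the analogous linear system is nondegenerate at $t=\pm 1$ (so $t^2+t^{-2}=2$), which by continuity ensures the coefficient is invertible whenever $t^2+t^{-2}$ is, and this is the single place where the hypothesis on $\mathcal{R}$ is really needed.
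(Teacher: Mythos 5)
Your proposal follows essentially the same route as the paper: reduce any trace on four or more punctures by resolving the crossings of a product of two interleaving simple loops, obtaining $(t^2+t^{-2})$ times the long trace plus products of strictly shorter traces, and then divide by the invertible element $(t^2+t^{-2})$. The substance of the paper's proof is exactly the explicit identity you defer as "bookkeeping," namely
$x_{il}\,x_{j\xi} = (t^2+t^{-2})\,x_{ijl\xi} + (\text{products of shorter traces})$ for $i<j<l<\xi$, which shows the coefficient is precisely $(t^2+t^{-2})$ with no extraneous integer factor, so the continuity/Fricke--Vogt heuristic is unnecessary; note also that among your three degree-$4$ products only the genuinely interleaved one $x_{i_1i_3}\,x_{i_2i_4}$ produces the four-index trace (the other two pairs of loops are disjoint or nested), so a single well-chosen product suffices and there is no linear system to solve.
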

\begin{proof}
We follow Corollary 4.1.2 in \cite{GM} which is reformulated by using skein theory in \cite{B}.  
Let $\pi$ be the projection from $\mathcal{T}_k$ to $\mathcal{S}_k$. 
Any element of $\mathcal{S}_{k,1}^{\trace}$ is a linear combination of elements in the image of the map $\trace\circ\,\pi : \mathcal{T}_{k,1}^F \to \mathcal{S}_{k,1}^{\trace}$,  Lemma \ref{lem:generator}, Lemma \ref{lem:reductionformula} and Proposition \ref{prop:span} imply that
$x_{i_1i_2\cdots i_l}$ generate $\mathcal{S}_{k,1}^{\trace}$.  
Let $1 \leq i < j < l < m_1 < m_2 < \cdots < m_j$ and $\xi$ is the sequence $m_1$, $m_2$, $\cdots$, $m_j$. 
By using the skein relation, we have
\begin{multline*}
x_{il}\, x_{j\xi}
= (t^2+t^{-2})x_{ijl\xi}+\\
x_{jl\xi}\,x_{i}+x_{il\xi}\,x_{j}+x_{ij\xi}\,x_{l}+x_{ijl}\,x_{\xi}+
t^{-4} x_{i\xi}\,x_{jl}+
t^{4}x_{ij}\,x_{l\xi}+\\
t^{2}x_{i}\,x_j\,x_{l\xi}+t^{2}x_{l}\,x_{\xi}\,x_{ij}+
t^{-2}x_{i}\,x_{\xi}\,x_{jl}+t^{-2}x_{j}\,x_{l}\,x_{i\xi}+
x_i\,x_j\,x_l\,x_{\xi}.
\end{multline*}
%
By using this relation repeatedly, $x_{ijk\xi}$ is expressed by the quantum traces  $x_{\cdot\cdot\cdot}$ with three or less indices.  
\end{proof}
We also know the followings.
\begin{prop}
The elements $x_1$, $x_2$, $\cdots$, $x_k$  commute with any element of $\mathcal{S}_{k,1}$.  
\end{prop}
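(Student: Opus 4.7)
The plan is to realize $x_i=\trace(\alpha_i)$ as the skein class of a trivial ribbon together with a small closed loop $\gamma_i$ encircling only the puncture $q_i$, and then to argue that such a peripheral loop is central in $\mathcal{S}_{k,1}$ by a standard sliding argument.

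First, I would unfold the definition $x_i=-t^{2}\alpha_i-t^{-2}(\alpha_i\circ S)$. The composite $\alpha_i\circ S$ is $\alpha_i$ capped by a negative curl; applying the Kauffman bracket skein relation at the curl crossing (and using $-t^{\pm 3}$ to absorb the framing kinks) rewrites $\alpha_i\circ S$ as a linear combination of $\alpha_i$ itself and of the disjoint union of the trivial strand with a $0$-framed circle around $q_i$. Collecting the contributions in $\trace(\alpha_i)$ shows that, up to an explicit scalar, $x_i$ is represented by the trivial ribbon placed beside a small loop $\gamma_i$ around $q_i$.

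Second, given any $y\in\mathcal{S}_{k,1}$, the two products $m(x_i\otimes y)=\boldsymbol{\mu}(x_i\otimes y)\circ\Delta$ and $m(y\otimes x_i)$ are both represented by the underlying tangle of $y$ together with the loop $\gamma_i$, placed either above or below $y$ in the height direction of $D_k\times[0,1]$. To show these two positions coincide in $\mathcal{S}_{k,1}$, I slide $\gamma_i$ along $q_i\times[0,1]$ from one side of $y$ to the other. The sliding cylinder intersects the strands of $y$ transversally in finitely many points; at each such point two crossings between $\gamma_i$ and a strand of $y$ are created. Applying the Kauffman bracket skein relation at both crossings and collecting terms yields the same skein class as the loop placed on the opposite side, so $m(x_i\otimes y)=m(y\otimes x_i)$.

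The main obstacle is making the sliding step clean and avoiding case analysis. A tidy route is to reduce via Proposition \ref{prop:span} and Lemma \ref{lem:generator} to the case where $y$ is a monomial $\alpha_{j_1\cdots j_l}$, since $\mathcal{S}_{k,1}$ is spanned by such words over $\mathcal{S}_{k,1}^{\trace}$. Centrality on these generators is then a concrete computation with the skein relation at each crossing of $\gamma_i$ with the strand of $\alpha_{j_1\cdots j_l}$; this is the familiar fact that loops around individual punctures (i.e.\ peripheral curves of the punctured disk) are central in the Kauffman bracket skein algebra, cf.\ \cite{PS}. The commutation with elements of $\mathcal{S}_{k,1}^{\trace}$ then follows from compatibility of the product $m$ with the two-sided $\mathcal{S}_{k,1}^{\trace}$-module structure.
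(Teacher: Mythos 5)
Your overall strategy is the same as the paper's: identify $x_i=\trace(\alpha_i)$ with the class of a trivial ribbon together with a small loop $\gamma_i$ encircling only the puncture $q_i$, and then slide $\gamma_i$ past $y$; the paper's entire proof is the one-line diagrammatic version of exactly this (Figure \ref{fig:xjcommute}). Your first step, unpacking $\trace(\alpha_i)=-t^2\alpha_i-t^{-2}(\alpha_i\circ S)$ via the curl, is fine and matches Figure \ref{fig:trace}.

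The gap is in how you justify the slide. You propose to push $\gamma_i$ through the strands of $y$ and to resolve, at each transversal intersection with the sliding cylinder, the two resulting crossings by the Kauffman bracket relation, claiming the terms ``collect'' to the loop on the other side. They do not: if $D_{AA}$ and $D_{BB}$ denote the two complete smoothings of the pair of crossings, then the difference between ``loop over the strand'' and ``loop under the strand'' is $(t^2-t^{-2})(D_{AA}-D_{BB})$, which is nonzero in general. If this computation worked it would make \emph{every} simple closed curve central, which is false (e.g.\ the curve representing $x_{12}$ does not obviously commute with $\alpha_1$ in $\mathcal{S}_{2,1}$; the paper only derives such a relation in the quotient $\widetilde{\mathcal{A}}_b$). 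The correct justification needs no skein relation at all: since $y$ is a compact tangle disjoint from the puncture line $q_i\times[0,1]$, there is an $\epsilon$-neighborhood $N=D_\epsilon(q_i)\times[0,1]$ meeting $y$ nowhere; shrink $\gamma_i$ into $N$ at its original height (the slice there is free of $y$), slide it vertically inside $N$, and expand it again on the other side. This isotopy avoids all crossings and immediately gives $m(x_i\otimes y)=m(y\otimes x_i)$. Your fallback of citing \cite{PS} for centrality of peripheral curves is acceptable, but note that $\mathcal{S}_{k,1}$ has a ribbon with endpoints on the boundary, so you should either check that the cited statement covers this relative setting or simply run the shrinking isotopy above, which does; the reduction to words $\alpha_{j_1\cdots j_l}$ is then unnecessary.
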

\begin{proof}
For any $y \in \mathcal{S}_{k,1}$, $x_j\, y=y\, x_j$ as explained in Figure \ref{fig:xjcommute}.  
\end{proof}
\begin{figure}[htb]
\begin{multline*}
m\left(
\begin{matrix}
\includegraphics[scale=0.7]{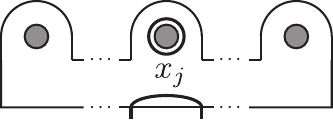}
\end{matrix}\ , 
\ \ 
\begin{matrix}
\includegraphics[scale=0.7]{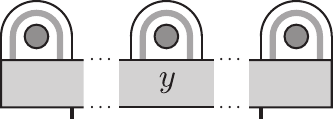}
\end{matrix}
\right)
\ = \ 
\begin{matrix}
\includegraphics[scale=0.7]{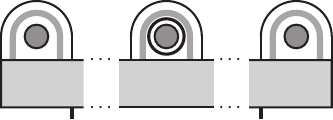}
\end{matrix}
\\
= \ m\left(
\begin{matrix}
\includegraphics[scale=0.7]{longitude20}
\end{matrix}\ , 
\ \ 
\begin{matrix}
\includegraphics[scale=0.7]{longitude22}
\end{matrix}
\right)\ .
\end{multline*}
\caption{The trace $x_j = \trace(\alpha_j)$ commutes with any element of $\mathcal{S}_{k,1}$ with respect to the multiplication $m$.}
\label{fig:xjcommute}
\end{figure}
\begin{prop}
The trace $x_j$ satisfies $S\circ x_j = x_j$.  
\end{prop}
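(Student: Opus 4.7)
The plan is a diagrammatic argument rooted in the closed-loop interpretation of $x_j$ in the Kauffman bracket skein algebra. The key idea is that, after applying the skein relation, $x_j$ represents a closed unoriented ribbon loop encircling the $j$-th puncture, and such a loop is fixed by the antipode $S$ because the Kauffman bracket is direction-independent.

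First I would unfold the definition
\[
x_j \;=\; \trace(\alpha_j) \;=\; -t^{2}\alpha_j \,-\, t^{-2}(\alpha_j\circ S)
\]
and apply $S$ linearly to obtain
\[
S\circ x_j \;=\; -t^{2}(\alpha_j\circ S) \,-\, t^{-2}(\alpha_j\circ S^{2}).
\]
The composition $S^{2}$ on a single ribbon reduces to the identity in $\mathcal{S}_{k,1}$, either via the antipode identity $(\alpha_j^{-1})^{-1}=\alpha_j$ together with the convention $S(\alpha_j)=\alpha_j\circ S=\alpha_j^{-1}$, or by direct application of the skein relation \eqref{eq:KBSR} to the double twist. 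Substituting $\alpha_j\circ S^{2}=\alpha_j$ and then using the reduction formula $\alpha_j\circ S = -t^{2}x_j - t^{4}\alpha_j$ from Lemma~\ref{lem:reductionformula}, one rearranges the resulting expression and matches it against the defining formula for $x_j$.

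A more transparent geometric route proceeds as follows. By Figure~\ref{fig:trace}, $x_j=\trace(\alpha_j)$ is the middle diagram, namely the closed ribbon loop around the $j$-th puncture obtained by joining the two endpoints of $\alpha_j$ via the antipode closure. The Kauffman bracket and circle relations \eqref{eq:KBSR}--\eqref{eq:circle} do not refer to an orientation on curves, so reversing the direction of traversal along the closed loop yields the same skein class. Since the antipode $S$ encodes orientation reversal along ribbons (cf.\ Figure~\ref{fig:Hopf}), this shows $S\circ x_j = x_j$ at once.

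The main obstacle is the careful bookkeeping of framing twists introduced by composing twice with $S$: a priori $S^{2}$ may multiply a ribbon by $-t^{\pm 3}$. The algebraic approach requires these factors to cancel precisely against the $-t^{\pm 2}$ coefficients built into $\trace$, while the geometric approach sidesteps this issue by working directly with the closed loop, though one still needs to formalise the sense in which $x_j$ behaves as a closed loop within the one-ribbon module $\mathcal{S}_{k,1}$.
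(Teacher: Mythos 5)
Your second, geometric argument is essentially the paper's own proof: Figure~\ref{fig:Sxj} computes $S\circ x_j$ diagrammatically, observes that the closed loop encircling the $j$-th puncture is untouched while the antipode contributes a pair of opposite curls on the residual boundary arc, and cancels them as $(-t^3)(-t^{-3})=1$. So the framing bookkeeping you identify as the main obstacle is not sidestepped by the geometric route --- it is exactly the content of the paper's figure, and it works out because the two curls have opposite sign. The formalisation you ask for (``in what sense is $x_j$ a closed loop inside the one-ribbon module'') is supplied by Figure~\ref{fig:trace}: $\trace(x)$ is represented by a single diagram consisting of the closure of the ribbon together with a small boundary-parallel arc, and $S$ acts only on that arc.

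Your first, algebraic route, however, contains a genuine error: $S^2$ is \emph{not} the identity on $\mathcal{S}_{k,1}$, so the step $\alpha_j\circ S^2=\alpha_j$ fails. By Lemma~\ref{lem:reductionformula} one has $\alpha_j\circ S=\alpha_j^{-1}=-t^2x_j-t^4\alpha_j$, and applying $S$ once more (using the very identity $S\circ x_j=x_j$ to be proved) gives
\[
\alpha_j\circ S^2=-t^2\,x_j-t^4\,(\alpha_j\circ S)=(t^6-t^2)\,x_j+t^8\,\alpha_j\neq\alpha_j .
\]
Indeed, the paper invokes $\alpha_1\circ S^2=\alpha_1$ only in the two-bridge section, where it is a relation holding modulo $\widetilde{J}_b$ (Figure~\ref{fig:trefoil1}), and there it forces $\alpha_1=-(t^2+t^{-2})^{-1}x_1$; if $\alpha_j\circ S^2=\alpha_j$ held identically in $\mathcal{S}_{k,1}$, this collapse would contradict the freeness of $1,\alpha_1,\alpha_2,\alpha_{12}$ over $\mathcal{S}_{2,1}^{\trace}$. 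The source of the confusion is conflating the antipode with the $m$-inverse: $S(\alpha_j)=\alpha_j^{-1}$ holds for the grouplike element $\alpha_j$, but $S(\alpha_j^{-1})\neq(\alpha_j^{-1})^{-1}$, and likewise the two curls produced by $S^2$ have the same sign and contribute $t^{\pm 6}$ rather than $1$. Running your algebraic computation with $S^2=\mathrm{id}$ would ``prove'' $x_j=-(t^2+t^{-2})\alpha_j$ in $\mathcal{S}_{k,1}$, which is false. Only the geometric route survives, and it should be carried out as in Figure~\ref{fig:Sxj}.
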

\begin{proof}
The proof is given in Figure \ref{fig:Sxj}.  
\end{proof}
\begin{figure}[htb]
\[
S \circ x_j 
\ = \ 
\begin{matrix}
\includegraphics[scale=0.7]{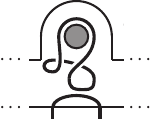}
\end{matrix}
\ =\ 
(-t^3)(-t^{-3})\,
\begin{matrix}
\includegraphics[scale=0.7]{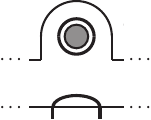}
\end{matrix}
\ =\ 
\begin{matrix}
\includegraphics[scale=0.7]{trace3}
\end{matrix}
\ = \ 
x_j.
\]
\caption{Proof of $S\circ x_j = x_j$.}
\label{fig:Sxj}
\end{figure}
\subsection{Structure of $\mathcal{S}_{k.n}$}
\begin{prop}
\label{prop:tensor}
The  inclusion map $\boldsymbol{\mu} : \underset{n}{\underbrace{\mathcal{T}_{k,1} \otimes \cdots \otimes \mathcal{T}_{k,1}}} \to \mathcal{T}_{k,n}$
induces a surjection 
\[
\overline{\boldsymbol{\mu} } : \underset{n}{\underbrace{\mathcal{S}_{k,1} \otimes \cdots \otimes \mathcal{S}_{k,1}}} \to \mathcal{S}_{k,n}.
\]  
\end{prop}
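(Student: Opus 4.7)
The plan is to prove surjectivity of $\overline{\boldsymbol{\mu}}$ in two steps: reduce any class in $\mathcal{S}_{k,n}$ to a linear combination of flat (crossingless) bottom tangles via the Kauffman bracket skein relation, and then recognize each flat $n$-ribbon bottom tangle as a stacked $\boldsymbol{\mu}$-product of single-ribbon bottom tangles.

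For the reduction step, let $T \in \mathcal{T}_{k,n}$ be represented by a diagram with $c$ crossings. Applying the Kauffman bracket skein relation \eqref{eq:KBSR} at any crossing yields $[T] = t[T_0] + t^{-1}[T_\infty]$ in $\mathcal{S}_{k,n}$, where $T_0$ and $T_\infty$ are the two smoothings and each has at most $c-1$ crossings. Closed loops created by a smoothing are removed by the circle relation \eqref{eq:circle}. Induction on $c$ then yields $[T] = \sum_j c_j [F^{(j)}]$ for some $c_j \in \mathcal{R}$ and flat bottom tangles $F^{(j)} \in \mathcal{T}^F_{k,n}$.

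For the decomposition step, let $F \in \mathcal{T}^F_{k,n}$. Its projection to $D_k$ consists of $n$ pairwise disjoint embedded arcs $\gamma_1, \dots, \gamma_n$, where $\gamma_i$ is the projection of the $i$-th ribbon dictated by the height convention $h_1(1) < h_1(0) < \cdots < h_n(0)$. Because the $\gamma_i$ are pairwise disjoint in the $2$-disk, a bottom-tangle isotopy of $D_k \times [0,1]$ can be performed placing the $i$-th ribbon entirely inside the slab $D_k \times [(i-1)/n, i/n]$. By the definition of $\boldsymbol{\mu}$ recalled in Figure \ref{fig:multiplication} and \eqref{eq:braidedmultiplication}, this slab-stacked configuration is precisely $\boldsymbol{\mu}(F_1 \otimes \cdots \otimes F_n)$, where $F_i \in \mathcal{T}_{k,1}$ is the single-ribbon bottom tangle supported on $\gamma_i$. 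Hence
\[
[F] = \overline{\boldsymbol{\mu}}\big([F_1] \otimes \cdots \otimes [F_n]\big) \quad \text{in } \mathcal{S}_{k,n}.
\]
Combining the two steps, every class in $\mathcal{S}_{k,n}$ is a linear combination of $\overline{\boldsymbol{\mu}}$-images, so $\overline{\boldsymbol{\mu}}$ is surjective.

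The main obstacle is the reduction step at an inter-ribbon crossing: one of the two Kauffman smoothings can locally reconnect the strands so that the resulting tangle appears to violate the strict bottom-tangle convention that ribbon $i$ join the $(2i{-}1)$-st and $(2i)$-th endpoints. Ensuring that, after a suitable bottom-tangle isotopy and re-indexing of ribbons consistent with the height ordering, such a smoothing is still represented by a valid element of $\mathcal{T}_{k,n}$ (and that the induction on crossing number still terminates) is the delicate technical point; the decomposition of flat tangles in the second step is then an essentially immediate topological consequence of the disjointness of the projected arcs.
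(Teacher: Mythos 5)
Your overall strategy (skein-reduce, then stack the pieces) is in the same spirit as the paper's, but the obstacle you flag in your final paragraph is a genuine gap, not a technicality, and your argument does not get around it. When you resolve a crossing between two \emph{distinct} ribbons via \eqref{eq:KBSR}, \emph{both} smoothings re-pair the four local endpoints; neither preserves the bottom-tangle condition that the two endpoints of ribbon $i$ be adjacent in the height order. After full resolution you are therefore left with flat diagrams whose boundary pairing is an arbitrary non-crossing matching of the $2n$ endpoints, e.g.\ a nested pattern in which one arc's endpoints enclose another's. Such a diagram is not a bottom tangle, cannot be made one by isotopy and re-indexing, and is not visibly of the form $\boldsymbol{\mu}(F_1\otimes\cdots\otimes F_n)$ (already for $k=0$, $n=2$ the adjacent and nested crossingless matchings are distinct basis elements of the Temperley--Lieb module of the disk with four marked boundary points). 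So the output of your step 1 is not covered by your step 2. A second, smaller inaccuracy: the circle relation \eqref{eq:circle} only removes contractible loops; a smoothing can create a loop encircling punctures, which survives as a nontrivial skein element (a quantum trace) and must instead be absorbed into one of the $\mathcal{S}_{k,1}$ factors.

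The paper closes exactly this gap by never fully resolving a crossing between two ribbons. It derives from \eqref{eq:KBSR} the relation \eqref{eq:skein}, $t\,D_+ - t^{-1}D_- = (t^2-t^{-2})\,D_0$, and uses it to perform \emph{crossing changes} --- which preserve the endpoint pairing --- on the single rightmost string $s$, making $s$ an understrand everywhere at the cost of terms with strictly fewer crossings (handled by induction on the crossing number). Once $s$ passes under everything, its two endpoints are slid together as in Figure~\ref{fig:end}, producing a controlled correction of the form $x\otimes\eta$ with $x\in\mathcal{S}_{k,n-1}$, after which the diagram splits as $\boldsymbol{\mu}\big((D\setminus s)\otimes D_s\big)$ with $D_s\in\mathcal{S}_{k,1}$; induction on $n$ then finishes. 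If you want to keep your ``resolve everything'' route, you must add a lemma showing that a flat diagram with an arbitrary non-crossing endpoint matching equals, modulo the skein relations applied near the bottom line, a linear combination of diagrams with the standard adjacent matching (the height-exchange manipulations of stated skein theory); without that lemma the proof is incomplete.
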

\begin{proof}
It is proved by an induction on $n$ and the number of crossings of a diagram in $\mathcal{S}_{k, n}$.  
From the Kauffman bracket skein relation, we have
\begin{equation}
t \, 
\begin{matrix}
\epsfig{file=skein1, scale=0.5}
\end{matrix}
\ -\ 
t^{-1} \ 
\begin{matrix}
\epsfig{file=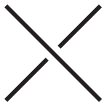, scale=0.5}
\end{matrix}
\ = \ 
(t^2 - t^{-2}) \ 
\begin{matrix}
\epsfig{file=skein2, scale=0.5}
\end{matrix}\ .
\label{eq:skein}
\end{equation}
Let $D$ be a skein diagram in $\mathcal{S}_{k, n}$ and $s$ be a string one of whose end points is the rightmost point at the bottom line.  
If $s$ passes  over at a crossing point $c$, 
let $D^u$ be the diagram which is obtained by applying the crossing change at $c$  to $s$.  
Then, by applying \eqref{eq:skein}, $D$ is expressed as a linear combination of $D^u$ and a skein  diagram whose number of crossing points is less than that of $s$.  
\par
Now we assume that $s$ passes under at all the crossing points.  
The end points of $s$ is as in Figure \ref{fig:end}.  
Let $D'$ is the diagram which is  obtained from $D$ by modifying as in Figure \ref{fig:end}.   
Then $D - t^{l}\, D'$ is a linear combination of diagrams of the form $x \otimes \eta$ where $x \in \mathcal{S}_{k, n-1}$ and $l$ is the number of end points between the two end points of $s$.  
The rightmost string $s$ of $D'$ passes under at all the crossings and there is a skein diagram $D_s \in \mathcal{S}_{k, 1}$ such that $D' = \boldsymbol{\mu}(({D\setminus s}) \otimes D_s))$ where $D\setminus s$ is the skein diagram obtained by removing $s$ from $D$.  
\begin{figure}[htb]
\[
D : 
\raisebox{-3mm}{$
\begin{matrix}
\epsfig{file=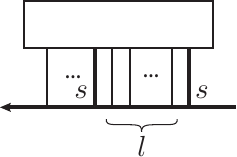, scale=0.8}
\end{matrix}$}
,
\qquad
D' : 
\begin{matrix}
\epsfig{file=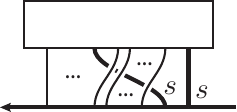, scale=0.8}
\end{matrix}
\]
\caption{Modifying $D$ to $D'$.}
\label{fig:end}
\end{figure}
\end{proof}
\subsection{Action of the bottom tangles on $\mathcal{S}_{k}$.}
Since the relations of the skein algebra are all local relations, we get the following.  
\begin{prop}
Let $T$ be a bottom tangle in $\mathcal{T}_{l,k}$.  
Then the $\mathcal{R}$-module map from $S_{k,n}$  to $S_{l,n}$  defined by composing  $T$ with elements of $S_{l,n}$  is well-defined. 
Moreover, if $T$ is a flat bottom tangle,  this map gives an $\mathcal{R}$-algebra homomorphism from $\mathcal{S}_k$ to $\mathcal{S}_l$.  
Especially, the bottom tangle corresponding to a braid with $k$ strings gives an $\mathcal{R}$-algebra automorphism of $\mathcal{S}_k$.  
\end{prop}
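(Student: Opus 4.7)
The statement has three parts, and I would prove them in sequence, with each building on the previous.

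\emph{Well-definedness of the map on skein modules.} The key observation is that both the Kauffman bracket skein relation and the circle relation are \emph{local}: they identify two tangle diagrams that agree outside a small $3$-ball in $D_k \times [0,1]$. For $T \in \mathcal{T}_{l,k}$, the composition $T \circ T'$ is formed by gluing the handles of $T'$ to the ribbons of $T$, so the diagram of $T'$ is inserted into the region above $T$ inside $D_l \times [0,1]$, while $T$'s diagram itself is unchanged. Hence if two representatives $T_1', T_2' \in \mathcal{T}_{k,n}$ differ by a single skein move inside a small ball, the compositions $T \circ T_1'$ and $T \circ T_2'$ differ by the same local move inside the corresponding ball in $D_l \times [0,1]$. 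It follows that the $\mathcal{R}$-module map $T \circ (-)$ descends from $\mathcal{T}_{k,n} \to \mathcal{T}_{l,n}$ to a well-defined map $\mathcal{S}_{k,n} \to \mathcal{S}_{l,n}$.

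\emph{Algebra homomorphism when $T$ is flat.} Here I would invoke Proposition~\ref{prop:flat}, which says that any flat bottom tangle $F$ satisfies $F \circ \boldsymbol{\mu}(x \otimes y) = \boldsymbol{\mu}\bigl((F \circ x) \otimes (F \circ y)\bigr)$ at the level of $\mathcal{T}$. Since the well-definedness of the previous step ensures that this identity passes to the skein quotient, the induced map $T \circ (-) : \mathcal{S}_k \to \mathcal{S}_l$ respects the multiplication $\boldsymbol{\mu}$. Together with $\mathcal{R}$-linearity this gives the desired $\mathcal{R}$-algebra homomorphism.

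\emph{Braid case.} By Proposition~\ref{prop:Bn}, any braid $b \in B_k$ is represented by a flat bottom tangle $T_b \in \mathcal{T}_{k,k}$, so by the previous part $T_b \circ (-) : \mathcal{S}_k \to \mathcal{S}_k$ is an algebra homomorphism. The inverse braid $b^{-1}$ provides another flat bottom tangle $T_{b^{-1}}$, and by functoriality of composition with respect to isotopy one has $T_b \circ T_{b^{-1}} = T_{bb^{-1}} = T_e = id$ (and similarly in the reverse order), so $T_b \circ (-)$ is an $\mathcal{R}$-algebra automorphism of $\mathcal{S}_k$.

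The main conceptual point, and the only step that requires a genuine argument, is the locality observation for well-definedness; Propositions~\ref{prop:flat} and~\ref{prop:Bn} do essentially all the remaining work. I expect no obstacles beyond being careful that a skein triple inside the inserted copy of a representative of $T'$ indeed looks like a skein triple inside $D_l \times [0,1]$, so that one is still identifying elements by admissible local moves in the target skein module.
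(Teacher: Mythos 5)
Your proposal is correct and matches the paper's (much terser) argument: the paper justifies well-definedness by exactly your locality observation, and the remaining claims follow from Proposition~\ref{prop:flat} and Proposition~\ref{prop:Bn} as you describe. No issues.
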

%
%
%
\section{$\BSL(2)$ character module of a knot}
In this section,  we introduce the space of $\BSL(2)$ representations and the $\BSL(2)$ character module.  
For simplicity, we assume that $\mathcal{R} = \mathbb{C}$ and $t$ be a generic element of $\mathbb{C}$, i.e. $t$ is not a root of any algebraic equation.  
\subsection{Space of representations coming from the skein algebra}
Let $\pi$ be the projection from $\mathcal{T}_k$ to $\mathcal{S}_k$ as before and
 $\widetilde J_b = \pi(J_b)$ where $J_b$ is the submodule introduced in \S \ref{sub:universal}.  
 \begin{prop}
 The submodule $\widetilde{J}_b$ is a two-sided  $\mathbb{C}[x_1, \cdots, x_k]$-module.  
 \end{prop}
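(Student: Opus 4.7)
The goal is to show that $\widetilde J_b = \pi(J_b) \subset \mathcal S_k$ is preserved under both left and right multiplication by each $x_j$, where $\mathcal S_k$ is equipped with the product $\boldsymbol \mu$. I would handle left-invariance $x_j\cdot\widetilde J_b\subseteq \widetilde J_b$ in detail; right-invariance $\widetilde J_b\cdot x_j\subseteq\widetilde J_b$ is analogous, and in fact follows once one observes that the closed loop $x_j$ can be isotoped past any other ribbons, so $\boldsymbol\mu(x_j\otimes y)=\boldsymbol\mu(y\otimes x_j)$ for every $y\in\mathcal S_k$ (the sliding argument of Figure \ref{fig:xjcommute} extends from $\mathcal S_{k,1}$ to all of $\mathcal S_k$ via Proposition \ref{prop:tensor}, which writes every element of $\mathcal S_{k,n}$ as a $\boldsymbol\mu$-product of elements of $\mathcal S_{k,1}$).

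A generator of $J_b$ has the form $(\widehat T_b - \boldsymbol\varepsilon_k)(w)$ with $w \in \mathcal T_{2k}$, so I must exhibit, for each such $w$, an element $w' \in \mathcal T_{2k}$ satisfying $x_j\cdot \pi((\widehat T_b-\boldsymbol\varepsilon_k)(w)) = \pi((\widehat T_b - \boldsymbol \varepsilon_k)(w'))$ in $\mathcal S_k$. The natural candidate is $w' = \boldsymbol\mu(\widetilde x_j\otimes w)$, where $\widetilde x_j := \trace(\alpha_{k+j}) \in \mathcal T_{2k,1}$ is the quantum meridian around the $(k+j)$-th puncture of $D_{2k}$. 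For the $\boldsymbol\varepsilon_k$-piece this is immediate: since $\boldsymbol\varepsilon_k = \varepsilon^{\otimes k}\otimes\mathrm{id}^{\otimes k}$ caps the first $k$ ribbons and preserves the last $k$, it sends $\widetilde x_j$ to $x_j$ and commutes with $\boldsymbol\mu$ applied to the preserved strands, giving $\boldsymbol\varepsilon_k(w')=x_j\cdot\boldsymbol\varepsilon_k(w)$.

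The main step, and the main obstacle, is verifying the analogous identity $\pi(\widehat T_b(w')) = x_j\cdot \pi(\widehat T_b(w))$ for the far more intricate tangle $\widehat T_b = (\mu\circ(\mathrm{id}\otimes S))^{\otimes k}\circ(\cdots)\circ T_b\circ\Delta^{\otimes k}$. The geometric picture is to isotope the closed loop $x_j$ around the $j$-th output puncture backwards through the diagram into the input disk $D_{2k}$; because $x_j$ has no endpoints this is a genuine isotopy in $D_k\times[0,1]$, and any interactions with crossings of $T_b$ are handled by the skein relations in $\mathcal S_k$. The delicate point is that the particular puncture on which the pulled-back loop lands may depend on $b$, so the preimage need not equal $\widetilde x_j$ on the nose. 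I plan to resolve this by combining the already-established $\Ad$-coinvariance of $x_j$ with Proposition \ref{prop:Ad}, whose intertwining relation $\Ad\circ(\widehat T_b-\boldsymbol\varepsilon_k) = ((\widehat T_b-\boldsymbol\varepsilon_k)\otimes\mathrm{id})\circ\Ad$ forces any slide-induced discrepancy between $\widehat T_b(w')$ and $x_j\cdot\widehat T_b(w)$ to itself lie in the image of $\widehat T_b - \boldsymbol\varepsilon_k$, i.e.\ in $J_b$. Projecting to $\mathcal S_k$ then completes the containment.
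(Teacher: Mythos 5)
Your skeleton coincides with the paper's (Figure \ref{fig:moduleJ}): slide the closed loop $x_j$ from the output of $\widehat{T}_b$ to its input, take $w'=\widetilde{x}_j\cdot w$ with $\widetilde{x}_j=\trace(\alpha_{k+j})$ the meridian of the $(k+j)$-th input puncture, observe $\boldsymbol{\varepsilon}_k(w')=x_j\cdot\boldsymbol{\varepsilon}_k(w)$, and dispose of right multiplication by the centrality of $x_j$. The gap is in the one step carrying all the content, the identity $x_j\cdot\widehat{T}_b(w)=\widehat{T}_b(\widetilde{x}_j\cdot w)$. Your proposed repair --- that the intertwining relation $\Ad\circ(\widehat{T}_b-\boldsymbol{\varepsilon}_k)=\big((\widehat{T}_b-\boldsymbol{\varepsilon}_k)\otimes id\big)\circ\Ad$ of Proposition \ref{prop:Ad}, combined with $\Ad$-coinvariance of $x_j$, ``forces the discrepancy into $J_b$'' --- does not work. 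That relation only says $J_b$ is an $\Ad$-subcomodule; it carries no information about which curve in $D_{2k}$ the slid loop becomes. Concretely, if the loop landed on the meridian of input puncture $k+j'$ with $j'\neq j$, the discrepancy would reduce modulo $J_b$ to $(x_{j'}-x_j)\,\boldsymbol{\varepsilon}_k(w)$, and the identification $x_{j'}\equiv x_j$ in $\widetilde{\mathcal{A}}_b$ is a later fact that holds only when $\widehat{b}$ is a knot; nothing in Proposition \ref{prop:Ad} excludes this scenario, while the present proposition is asserted for links as well.

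The good news is that the worry you are trying to neutralize does not actually arise, but one has to say why, and the reason is geometric rather than algebraic. The $b$-dependent permutation $\tau(b)$ appears only when a meridian is pushed through the braid part $T_b$; the meridian of the $j$-th output puncture never has to pass through $T_b$. Among the $2k$ ribbons of $\widehat{T}_b$ (Figures \ref{fig:tbxy} and \ref{fig:hatT}), the $(k+j)$-th one is the unbraided strand coming from $\iota_{1*}$: a single embedded arc encircling the $j$-th puncture of the output disk $D_k$ and nothing else. Closing it along the bottom line gives an embedded loop around that puncture, and the loop $x_j$ placed at the top of the stack is isotopic, in the complement of all the other ribbons of $\widehat{T}_b$, to a parallel push-off of it, i.e.\ to the image of the meridian of input puncture $k+j$ under the embedding determined by $\widehat{T}_b$. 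This isotopy is precisely what the paper's Figure \ref{fig:moduleJ} records; it yields $x_j\cdot\widehat{T}_b(w)=\widehat{T}_b(\widetilde{x}_j\cdot w)$ on the nose, with no skein relations needed. With that identity supplied, the rest of your argument closes as you describe.
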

 \begin{proof}
 For $y \in \widetilde{J}_b$, $x_j\, y$ is also contained in $\widetilde{J}_b$ 
 since $x_j$ commutes with any element. 
 See Figure \ref{fig:moduleJ}.  
 We also have $y \, x_j \in \widetilde{J}_b$ since $x_j\, y = y \, x_j$.  
 \end{proof}
 \begin{figure}[htb]
 \[
 \begin{matrix}
 \includegraphics[scale=0.8]{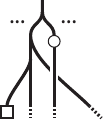}
 \end{matrix}
 \ - \ 
 \begin{matrix}
 \includegraphics[scale=0.8]{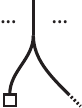}
 \end{matrix}
 \ = \ 
 \begin{matrix}
 \includegraphics[scale=0.8]{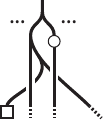}
 \end{matrix}
 \ - \ 
 \begin{matrix}
 \includegraphics[scale=0.8]{moduleJ0}
 \end{matrix}
 \ \in \ \widetilde{J}_b.
\]
 \caption{For $y \in \widetilde{J}_b$, $x_j\, y$ is also contained in $\widetilde{J}_b$. The box represents $x_j$.}
 \label{fig:moduleJ}
 \end{figure}
 \begin{definition}
 Let 
\begin{equation}
\widetilde{\mathcal{A}}_b = {\mathcal{S}}_k/\widetilde{J}_b.  
\label{eq:relation}
\end{equation}
We call $\widetilde{\mathcal{A}}_b$ {\it the space of $\BSL(2)$ representations} of the knot $\widehat{b}$. 
Here $\BSL(2)$ is the braided quantum group of $\SL(2)$ introduced by Majid in \cite{M91}.  
The relation to $\BSL(2)$ is given in Section \ref{sec:BSL}.  
 \end{definition}
The algebra $\widetilde{A}_b$ has a graded module structure
\[
\widetilde{A}_b = \bigoplus_{n=0, 1, 2, \cdots} \widetilde{A}_{b, n}
\]
where $\widetilde{A}_{b, n} = \mathcal{S}_{k,n}/\left(\mathcal{S}_{k,n}\cap \widetilde{J}_b\right)$.  
\begin{lemma}
\label{lem:skeingenerator}
The submodule $\widetilde{J}_b$ is spanned by the following set.

\[
\{(\widehat{T}_b-\boldsymbol{\varepsilon}_k)(y \otimes \boldsymbol{z}) \mid y \in \mathcal{S}_{k, 1}, \ 
\boldsymbol{z} \in \mathcal{T}_k\}.  
\]
\end{lemma}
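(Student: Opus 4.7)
The plan is to reduce an arbitrary generator of $\widetilde{J}_b$ of the form $(\widehat{T}_b - \boldsymbol{\varepsilon}_k)(\boldsymbol{x} \otimes \boldsymbol{z})$ to a linear combination of generators whose first factor lies in $\mathcal{S}_{k,1}$.  Two ingredients drive the reduction.  The first, already available, is Proposition \ref{prop:tensor}: modulo the skein relations, $\boldsymbol{x}$ may be expressed as $\boldsymbol{\mu}(y_1\otimes\cdots\otimes y_n)$ with each $y_i\in\mathcal{S}_{k,1}$.  The second, which must be proved, is the module-action identity
\[
\widehat{T}_b\circ\bigl(\boldsymbol{\mu}(y\otimes x')\otimes \boldsymbol{z}\bigr) \;=\; \widehat{T}_b\circ\bigl(y\otimes \widehat{T}_b\circ(x'\otimes \boldsymbol{z})\bigr),
\]
which is the quantum analogue of the classical computation
$\varpi\bigl(b\cdot\delta(x_1 x_2)\,\iota_{1*}(y)\bigr)=\varpi\bigl(b\cdot\delta(x_1)\bigr)\cdot\varpi\bigl(b\cdot\delta(x_2)\,\iota_{1*}(y)\bigr)$
in the fundamental group picture that motivates $\widehat{T}_b$.

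To establish this identity I would unpack $\widehat{T}_b$ diagrammatically.  The bottom layer $\Delta^{\otimes k}$ distributes a product of two inputs in a braided-coalgebra fashion; the braid body $T_b$ acts as an automorphism with respect to $\boldsymbol{\mu}$ by Proposition \ref{prop:algaut}; and the top cap $(\mu\circ(id\otimes S))^{\otimes k}$ plays the role of the projection $\varpi$, with the classical identity $\varpi\iota_{1*}=id$ realised by the cancellation of a $\Delta$ against an immediately following $\mu\circ(id\otimes S)$.  Feeding the inner $\widehat{T}_b\circ(x'\otimes \boldsymbol{z})$ into the $\iota_{1*}$-slot of an outer copy of $\widehat{T}_b$, one uses the braided commutativity recorded in Figure \ref{fig:propad} (as already exploited in Proposition \ref{prop:flat} and throughout the proof of Theorem \ref{thm:main}) to move the ribbons contributed by $y$ past those coming from the inner $\widehat{T}_b$.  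Reassembling the diagram recovers $\widehat{T}_b$ applied to the $\boldsymbol{\mu}$-product of $y$ and $x'$.

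Granting the module identity, an iteration yields
\[
\widehat{T}_b\bigl(\boldsymbol{\mu}(y_1\otimes\cdots\otimes y_n)\otimes \boldsymbol{z}\bigr) = \widehat{T}_b\bigl(y_1\otimes \widehat{T}_b(y_2\otimes\cdots\otimes \widehat{T}_b(y_n\otimes \boldsymbol{z})\cdots)\bigr),
\]
while $\boldsymbol{\varepsilon}_k\bigl(\boldsymbol{\mu}(y_1\otimes\cdots\otimes y_n)\otimes \boldsymbol{z}\bigr) = \varepsilon(y_1)\cdots\varepsilon(y_n)\,\boldsymbol{z}$.  A telescoping sum then gives
\[
(\widehat{T}_b-\boldsymbol{\varepsilon}_k)\bigl(\boldsymbol{\mu}(y_1\otimes\cdots\otimes y_n)\otimes \boldsymbol{z}\bigr) = \sum_{i=1}^{n}\varepsilon(y_1)\cdots\varepsilon(y_{i-1})\,(\widehat{T}_b-\boldsymbol{\varepsilon}_k)(y_i\otimes \boldsymbol{z}_i),
\]
where $\boldsymbol{z}_i=\widehat{T}_b(y_{i+1}\otimes\cdots\otimes \widehat{T}_b(y_n\otimes \boldsymbol{z})\cdots)$.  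Each summand has first factor $y_i\in\mathcal{S}_{k,1}$, and any bottom-tangle lift of $\boldsymbol{z}_i$ to $\mathcal{T}_k$ (which exists since $\pi$ is surjective) exhibits the summand as an element of the claimed spanning set.

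The main obstacle will be the diagrammatic verification of the module identity for $\widehat{T}_b$.  It requires chasing the interplay among the input coproduct $\Delta^{\otimes k}$, the braid automorphism property of $T_b$, and the behaviour of the antipode--multiplication cap $(\mu\circ(id\otimes S))^{\otimes k}$ on a nested copy of $\widehat{T}_b$ fed into its own $\boldsymbol{z}$-slot.  Everything else is bookkeeping: Proposition \ref{prop:tensor} chops up $\boldsymbol{x}$, the telescoping identity bundles the pieces into the required shape, and the skein projection takes care of the passage between $\mathcal{T}_k$ and $\mathcal{S}_k$.
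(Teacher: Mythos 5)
Your proposal follows essentially the same route as the paper: decompose $\boldsymbol{x}$ via the surjection $\overline{\boldsymbol{\mu}}:\otimes^n\mathcal{S}_{k,1}\to\mathcal{S}_{k,n}$ of Proposition \ref{prop:tensor}, establish the key identity $\widehat{T}_b\bigl(\boldsymbol{\mu}(y_1\otimes y_2)\otimes\boldsymbol{z}\bigr)=\widehat{T}_b\bigl(y_1\otimes\widehat{T}_b(y_2\otimes\boldsymbol{z})\bigr)$ diagrammatically using the automorphism property of $T_b$ (exactly the content of Figure \ref{fig:splitrelaion}, which the paper also proves via Proposition \ref{prop:algaut}), and then telescope. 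Your explicit closed-form telescoping sum is just a tidier packaging of the paper's iterated two-term reduction, so the argument is correct and matches the paper's proof.
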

\begin{proof}
$\widetilde{J}_b$ is by definition spanned by $(\widehat{T}_b-\boldsymbol{\varepsilon}_k)(\boldsymbol{y}\otimes \boldsymbol{z})$ 
$(\boldsymbol{y}, \boldsymbol{z} \in \mathcal{S}_k)$.  
Since $\overline{\boldsymbol{\mu}} : \otimes^n \mathcal{S}_{k,1}\to \mathcal{S}_{k,n}$ 
is surjective, it is enough to prove for the case that $\boldsymbol{y} = \overline{\boldsymbol{\mu}}(y_1\otimes y_2\otimes \cdots\otimes y_n)$ where $y_i
\in \mathcal{S}_{k,1}$ for $i =1, \cdots, n$.  
Let $\widetilde{J}'_b$ be the submodule spanned by $\{(\widehat{T}_b-\boldsymbol{\varepsilon}_k)(y \otimes \boldsymbol{z}) \mid y \in \mathcal{S}_{k, 1}, \ 
\boldsymbol{z} \in \mathcal{T}_k\}$.  
We show that 
$(\widehat{T}_b-\boldsymbol{\varepsilon}_k)(\mu(y_1 \otimes y_2) \otimes \boldsymbol{z})$ is contained in $\widetilde{J}'_b$ for $y_1$, $y_2 \in \mathcal{S}_{k, 1}$, $\boldsymbol{z} \in \mathcal{T}_k$.  
The following Figure \ref{fig:splitrelaion} shows that 
$
\widehat{T}_b(\mu(y_1 \otimes y_2) \otimes \boldsymbol{z} )
= 
\widehat{T}_b(y_1 \otimes \widehat{T}_b(y_2 \otimes \boldsymbol{z}) ).  
$
Hence 
\begin{multline*}
\widehat{T}_b(\mu(y_1 \otimes y_2) \otimes \boldsymbol{z})
- \boldsymbol{\varepsilon}_k(\mu(y_1 \otimes y_2) \otimes \boldsymbol{z})
=
\\
\widehat{T}_b(y_1 \otimes \widehat{T}_b(y_2 \otimes \boldsymbol{z}) )
- 
\boldsymbol{\varepsilon}_k(y_1 \otimes \widehat{T}_b(y_2 \otimes \boldsymbol{z}))
+
\\ {\ } \qquad\qquad\qquad\qquad\qquad\qquad
\boldsymbol{\varepsilon}_k(y_1 \otimes \widehat{T}_b(y_2 \otimes \boldsymbol{z}))
-
\boldsymbol{\varepsilon}_k(\mu(y_1 \otimes y_2) \otimes \boldsymbol{z})
=
\\
\widehat{T}_b(y_1 \otimes \widehat{T}_b(y_2 \otimes \boldsymbol{z}) )
- 
\boldsymbol{\varepsilon}_k(y_1 \otimes \widehat{T}_b(y_2 \otimes \boldsymbol{z}))
+
\\ 
\varepsilon^{\otimes k}(y_1)\, (\widehat{T}_b(y_2 \otimes \boldsymbol{z}))
-
\varepsilon^{\otimes k}(y_1)\, \boldsymbol{\varepsilon}_k(y_2 \otimes \boldsymbol{z})
\in \widetilde{J}'_b
\end{multline*}
since $(\widehat{T}_b-\boldsymbol{\varepsilon}_k)\big(y_2 \otimes \widehat{T}_b(y_2 \otimes \boldsymbol{z})\big)$, $(\widehat{T}_b-\boldsymbol{\varepsilon}_k)(y_1 \otimes \boldsymbol{z}) \in \widetilde{J}'_b$.  
Applying this relation repeatedly, we get
$
\widehat{T}_b(\overline{\boldsymbol{\mu}}(y_1 \otimes y_2\otimes \cdots\otimes y_n) \otimes \boldsymbol{z}) \in \widetilde{J}'_b
$.
\end{proof}
\begin{figure}[htb]
\begin{multline*}
\begin{matrix}
\includegraphics[scale=0.8]{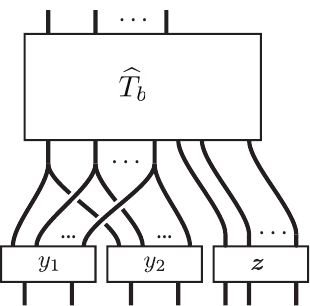}
\end{matrix}
=
\begin{matrix}
\includegraphics[scale=0.8]{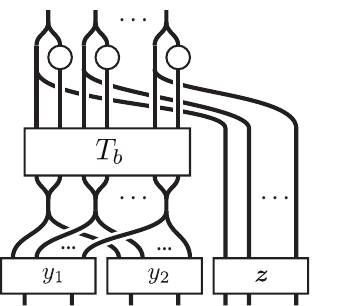}
\end{matrix}
=
\begin{matrix}
\includegraphics[scale=0.8]{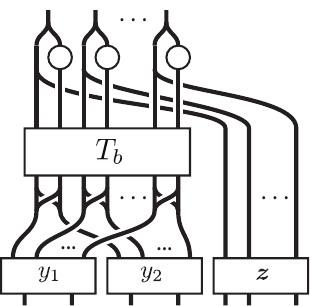}
\end{matrix}
\underset{Prop. \ref{prop:algaut}}{=}
\\
\begin{matrix}
\includegraphics[scale=0.8]{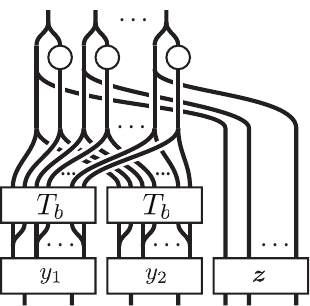}
\end{matrix}
=
\begin{matrix}
\includegraphics[scale=0.8]{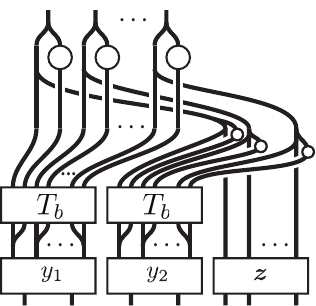}
\end{matrix}
=
\begin{matrix}
\includegraphics[scale=0.8]{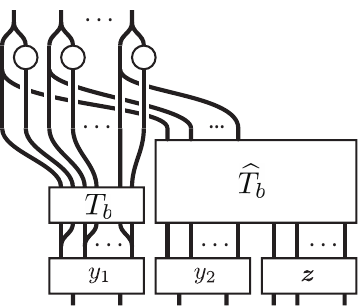}
\end{matrix}
\end{multline*}
\caption{Proof of $
\widehat{T}_b\big(\mu(x_1 \otimes x_2) \otimes \boldsymbol{y}\big)
= 
\widehat{T}_b\big(x_1 \otimes \widehat{T}_b(x_2 \otimes \boldsymbol{y} ) \big)
$.}
\label{fig:splitrelaion}
\end{figure}
\begin{prop}
\label{prop:generator}
The submodule $\widetilde{J}_b$ is spanned by the following set.
\[
\{(\widehat{T}_b-\boldsymbol{\varepsilon}_k)(\alpha_j^{\pm1} \otimes \boldsymbol{y}) \mid 1 \leq j \leq k, \ 
\boldsymbol{y} \in \mathcal{S}_k\}.  
\]
\end{prop}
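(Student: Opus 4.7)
The plan is to refine Lemma~\ref{lem:skeingenerator} by successively reducing the first factor of each generator of $\widetilde{J}_b$. By that lemma we already know $\widetilde{J}_b$ is spanned by elements $(\widehat{T}_b-\boldsymbol{\varepsilon}_k)(y\otimes\boldsymbol{z})$ with $y\in\mathcal{S}_{k,1}$ and $\boldsymbol{z}\in\mathcal{T}_k$, so it is enough to simplify the first factor $y$ to a single $\alpha_j^{\pm 1}$.

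First I would apply Proposition~\ref{prop:span} to expand $y$ as a finite sum $\sum_I c_I\,\alpha_{i_1}\alpha_{i_2}\cdots\alpha_{i_l}$ with $c_I\in\mathcal{S}_{k,1}^{\trace}$ and $i_1<\cdots<i_l$, splitting the problem into length-reduction of an $\alpha$-word and absorption of a trace coefficient. For the length reduction I would iterate the identity
\begin{equation*}
(\widehat{T}_b-\boldsymbol{\varepsilon}_k)(y_1y_2\otimes\boldsymbol{z}) = (\widehat{T}_b-\boldsymbol{\varepsilon}_k)\bigl(y_1\otimes\widehat{T}_b(y_2\otimes\boldsymbol{z})\bigr) + \varepsilon^{\otimes k}(y_1)\,(\widehat{T}_b-\boldsymbol{\varepsilon}_k)(y_2\otimes\boldsymbol{z})
\end{equation*}
established in the proof of Lemma~\ref{lem:skeingenerator}. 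Taking $y_1=\alpha_{i_1}$ and $y_2=\alpha_{i_2}\cdots\alpha_{i_l}$ and inducting on $l$ reduces each $\alpha$-word to a single $\alpha_i$. The inverse $\alpha_j^{-1}$ allowed by the statement is a matter of convenience: by Lemma~\ref{lem:reductionformula} one has $\alpha_j^{-1} = -t^{2}\,x_j - t^{4}\,\alpha_j$, and the $\mathbb{C}[x_1,\ldots,x_k]$-module property of $\widetilde{J}_b$ established immediately before Lemma~\ref{lem:skeingenerator} absorbs the $x_j$ factor.

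For the trace coefficients $c_I$, I would use the fact that an element of $\mathcal{S}_{k,1}^{\trace}$ is represented diagrammatically by closed curves in $D_k$ lying disjointly from the ribbon of $\alpha_I$. A diagrammatic isotopy of the same flavor as that in Figure~\ref{fig:moduleJ} should then allow one to push such a closed curve through the plat-closure piece of $\widehat{T}_b$, giving an identity of the form
\begin{equation*}
(\widehat{T}_b-\boldsymbol{\varepsilon}_k)(c\,\alpha_I\otimes\boldsymbol{z}) \;=\; (\widehat{T}_b-\boldsymbol{\varepsilon}_k)(\alpha_I\otimes c\,\boldsymbol{z}),
\end{equation*}
in which the trace is now safely sitting inside the second factor and the first factor is already coefficient-free.

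The main obstacle is precisely this trace-absorption step. Single-puncture traces $x_j$ are handled for free by the $\mathbb{C}[x_1,\ldots,x_k]$-module structure of $\widetilde{J}_b$, but the multi-puncture traces $x_{ij}$ and $x_{ijl}$ that Proposition~\ref{prop:span} forces on us require a genuine diagrammatic argument. I expect this to rest on Proposition~\ref{prop:flat}, which says flat bottom tangles commute with $\boldsymbol{\mu}$, together with the $\Ad$-coinvariance of the quantum trace (Figure~\ref{fig:Adtrace}): these should let one slide the closed-curve representatives of any $c\in\mathcal{S}_{k,1}^{\trace}$ freely across the flat pieces of $\widehat{T}_b$ and onto the $\boldsymbol{z}$-side without disturbing the $\Ad$-comodule structure of $\widetilde{J}_b$.
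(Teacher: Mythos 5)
Your skeleton is the paper's: reduce to a first factor $y\in\mathcal{S}_{k,1}$ by Lemma~\ref{lem:skeingenerator}, then peel off one letter at a time using the splitting identity of Figure~\ref{fig:splitrelaion}, and your explicit form of that identity (with the extra $\varepsilon^{\otimes k}(y_1)$ term) is correct and if anything cleaner than the paper's display. The gap is in how you decompose $y$. The paper expands $y$ by Lemma~\ref{lem:generator} into an $\mathcal{R}$-linear combination of words $\alpha_{i_1}^{\varepsilon_1}\cdots\alpha_{i_l}^{\varepsilon_l}$ --- the quantum-trace factors occurring there are themselves such combinations, e.g.\ $x_j=\trace(\alpha_j)=-t^2\alpha_j-t^{-2}\alpha_j^{-1}$ --- so the peeling induction alone finishes the proof. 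By invoking Proposition~\ref{prop:span} instead, you manufacture left coefficients $c_I\in\mathcal{S}_{k,1}^{\trace}$, and the identity you propose for disposing of them,
\[
(\widehat{T}_b-\boldsymbol{\varepsilon}_k)(c\,\alpha_I\otimes\boldsymbol{z})=(\widehat{T}_b-\boldsymbol{\varepsilon}_k)(\alpha_I\otimes c\,\boldsymbol{z}),
\]
is false. Test it with $c=x_j$ and $\alpha_I=1$: the right-hand side vanishes, since $(\widehat{T}_b-\boldsymbol{\varepsilon}_k)(1\otimes w)=0$; on the left, $\boldsymbol{\varepsilon}_k$ caps off the punctures and evaluates the closed curve $x_j$ to the scalar $-t^2-t^{-2}$, whereas $\widehat{T}_b$ doubles that curve, pushes it through $b$, and remerges it into the quantum trace of the relator $r_j$. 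Their difference is a generically nonzero element of $\widetilde{J}_b$ --- precisely the relation deforming $\operatorname{tr}\rho(r_j)=2$ --- so no wholesale transfer of a closed curve from the first slot to the second can hold: in the first slot $\boldsymbol{\varepsilon}_k$ kills it to a scalar, in the second it survives as a nontrivial skein element, and $T_b$ permutes the punctures it encircles besides. Neither Proposition~\ref{prop:flat} nor the $\Ad$-coinvariance of $\trace$ can rescue this, because generators with a trace in the first slot are genuinely new relations, not consequences of the $\alpha_I$-generators via sliding.

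The repair is short and lands you exactly on the paper's argument: do not separate the trace coefficients at all. Expand each $c_I$ as an $\mathcal{R}$-linear combination of words in the $\alpha_j^{\pm1}$ (Lemma~\ref{lem:generator} together with $\trace(W)=-t^2W-t^{-2}(W\circ S)$ does this), so that $y$ itself becomes a plain linear combination of words $\alpha_{i_1}^{\varepsilon_1}\cdots\alpha_{i_l}^{\varepsilon_l}$; your induction on word length then applies verbatim, and the $\alpha_j^{-1}$'s that appear need no further reduction because the proposition's generating set already admits them.
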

\begin{proof}
Let $\widetilde{J}''_b$ be the subspace spanned by
$
\{(\widehat{T}_b-\boldsymbol{\varepsilon}_k)(\alpha_j^{\pm1} \otimes \boldsymbol{y}) \mid 1 \leq j \leq k, \ 
\boldsymbol{y} \in \mathcal{S}_k\}
$.  
We show that $\widetilde{J}'_b$ in Lemma \ref{lem:skeingenerator} is included in $\widetilde{J}''_b$.
For 
$
\alpha_{i_1}^{\varepsilon_1}\alpha_{i_2}^{\varepsilon_2}\cdots\alpha_{i_l}^{\varepsilon_{i_l}}
$ ($\varepsilon_1$, $\varepsilon_2$, $\cdots$, $\varepsilon_{i_l}=\pm1$),
\begin{multline*}
(\widehat{T}_b-\boldsymbol{\varepsilon}_k)
(\alpha_{i_1}^{\varepsilon_1}\alpha_{i_2}^{\varepsilon_2}\cdots\alpha_{i_l}^{\varepsilon_{i_l}} \otimes \boldsymbol{y})
=
(\widehat{T}_b-\boldsymbol{\varepsilon}_k)
\Big(\big(\overline{\boldsymbol{\mu}}(\alpha_{i_1}^{\varepsilon_1}\otimes\alpha_{i_2}^{\varepsilon_2}\cdots\alpha_{i_l}^{\varepsilon_{i_l}})\circ \Delta(id)\big)\otimes\boldsymbol{y}\Big)
\\
=
(\widehat{T}_b-\boldsymbol{\varepsilon}_k)
\big(\overline{\boldsymbol{\mu}}(\alpha_{i_1}^{\varepsilon_1}\otimes\alpha_{i_2}^{\varepsilon_2}\cdots\alpha_{i_l}^{\varepsilon_{i_l}})\otimes \boldsymbol{y}\big)\circ \big(\Delta(id)\otimes id^{\otimes k}\big)
\\
=
(\widehat{T}_b-\boldsymbol{\varepsilon}_k)
\Big(\alpha_{i_1}^{\varepsilon_1} \otimes(\widehat{T}_b-\boldsymbol{\varepsilon}_k)\big(\alpha_{i_2}^{\varepsilon_2}\cdots\alpha_{i_l}^{\varepsilon_{i_l}}\otimes \boldsymbol{y}\big)\Big)\circ \big(\Delta(id)\otimes id^{\otimes k}\big).
\end{multline*}
Hence $(\widehat{T}_b-\boldsymbol{\varepsilon}_k)
(\alpha_{i_1}^{\varepsilon_1}\alpha_{i_2}^{\varepsilon_2}\cdots\alpha_{i_l}^{\varepsilon_{i_l}}  \otimes \boldsymbol{y})
\in \widetilde{J}''_b$ and we get  $\widetilde{J}''_b = \widetilde{J}'_b$.
\end{proof}
\subsection{Quantum character module}
\begin{definition}
Let 
\[
\widetilde{\mathcal{A}}_{b, 1}^{\trace} = {\mathcal{S}}_{k,1}^{\trace}/(\widetilde{J}_{b}\cap{\mathcal{S}}_{k,1}^{\trace}).  
\]
We call $\widetilde{\mathcal{A}}_{b, 1}^{\trace}$ {\it the quantum character module} of $\widehat{b}$.  
\end{definition}

\begin{thm}
The quantum character module $\widetilde{\mathcal{A}}_{b, 1}^{\trace}$
is isomorphic to the skein module of the complement of $\widehat{b}$.
\end{thm}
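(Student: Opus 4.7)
My plan is to construct a natural map $\Phi : \mathcal{S}_{k,1}^{\trace} \to \mathcal{K}(S^3\setminus \widehat{b})$ to the Kauffman bracket skein module of the knot complement and show that it descends to an isomorphism from $\widetilde{\mathcal{A}}_{b,1}^{\trace}$. The plat closure geometrically realizes $S^3 \setminus \widehat{b}$ as $(D_{2k}\times[0,1])$ with top and bottom caps identified according to the plat pairing of strands, glued along the braid $b$. The top cap is a handlebody containing a distinguished disk $D_k$ (the disk at the top of the braid whose boundary pairs up the strands); an element of $\mathcal{S}_{k,1}$ is a ribbon sitting in $D_k\times[0,1]$ with two bottom endpoints, and the quantum trace $\trace$ closes these endpoints off in the top handlebody using the framing conventions in Figure \ref{fig:trace}. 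Sending such a traced ribbon to the corresponding closed link in $S^3\setminus \widehat{b}$ defines $\Phi$.

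Next I would verify surjectivity of $\Phi$. Any framed link $L$ in $S^3\setminus \widehat{b}$ can be isotoped into the top handlebody above the braid, and further pushed to lie inside $D_k\times[0,1]$ at the expense of introducing overcrossings; after a regular isotopy the resulting picture is a union of closed loops in $D_k$, hence represents an element of $\mathcal{S}_{k,1}^{\trace}$ via Proposition \ref{prop:span} together with the observation that closed components are images of $\trace$. To see that $\Phi$ descends through $\widetilde{J}_b \cap \mathcal{S}_{k,1}^{\trace}$, I use the geometric meaning of $\widehat{T}_b - \boldsymbol{\varepsilon}_k$: the bottom tangle $\widehat{T}_b$ pushes a ribbon on $D_k$ down through the braid, across the plat pairing at the bottom, and back up, which is precisely an isotopy through the complement. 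Thus $(\widehat{T}_b - \boldsymbol{\varepsilon}_k)(y\otimes \boldsymbol{z})$ becomes trivial in $\mathcal{K}(S^3\setminus \widehat{b})$, and by Proposition \ref{prop:generator} these elements (in particular after applying $\trace$) generate $\widetilde{J}_b \cap \mathcal{S}_{k,1}^{\trace}$.

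The main obstacle is injectivity: I must show that any skein relation in $\mathcal{K}(S^3\setminus\widehat{b})$ between two traced ribbons in $D_k$ is already a consequence of the Kauffman bracket skein relations inside $D_k$ together with the quotient by $\widetilde{J}_b \cap \mathcal{S}_{k,1}^{\trace}$. The strategy is to use a standard presentation of the skein module of a handlebody-like piece glued along the braid: isotopies in $S^3\setminus\widehat{b}$ can be decomposed into local Kauffman moves (which are already imposed in $\mathcal{S}_k$) and handle slides across the braid $b$. A handle slide corresponds exactly to replacing a subarc by its image under $\widehat{T}_b$ as in Figure \ref{fig:splitrelaion}, which is precisely a generator of $\widetilde{J}_b$. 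Cellulating the ambient isotopy into finitely many such moves, together with Theorem \ref{thm:main} which ensures independence of the plat presentation, gives injectivity.

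Finally, I would cross-check the identification with the known presentations of $\mathcal{K}(S^3\setminus\widehat{b})$ as in \cite{B} and \cite{PS}: the images $\Phi(x_{i_1\cdots i_j})$ of the generators of $\mathcal{S}_{k,1}^{\trace}$ from Proposition \ref{prop:spantrace} are exactly the meridian-loop traces used in the classical character variety presentation, and the induced relations match the Wirtinger relations of $\pi_1(S^3\setminus \widehat{b})$ quantized to the Kauffman bracket setting, confirming that $\Phi$ induces an isomorphism $\widetilde{\mathcal{A}}_{b,1}^{\trace} \cong \mathcal{K}(S^3\setminus\widehat{b})$.
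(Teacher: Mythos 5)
Your proposal takes essentially the same route as the paper: construct the natural geometric map from $\mathcal{S}_{k,1}^{\trace}$ to the skein module of the complement, get surjectivity by isotoping all skeins into the distinguished punctured disk, and identify the kernel with $\widetilde{J}_b\cap\mathcal{S}_{k,1}^{\trace}$. The only structural difference is that you work with the plat presentation directly, whereas the paper passes to the closed-braid picture via Theorem \ref{thm:braid} before defining its map $f$; this is a cosmetic difference, since Theorem \ref{thm:braid} identifies the two modules.

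The one substantive point of comparison is the kernel computation. The paper's proof literally stops at the sentence ``we need to make sure that the kernel of $f$ equals $\widetilde{J}_{b}\cap \mathcal{S}_{k,1}^{\trace}$'' without carrying it out, so you actually go further than the printed proof: you correctly identify that $(\widehat{T}_b-\boldsymbol{\varepsilon}_k)$ implements an isotopy through the complement (hence lands in the kernel), and you propose to prove the reverse inclusion by decomposing an ambient isotopy in $S^3\setminus\widehat{b}$ into local Kauffman moves plus slides across the braid. That decomposition is the genuinely hard step and remains an assertion in your write-up --- you would need a general-position or Morse-theoretic argument that any isotopy between skeins supported in $D_k\times[0,1]$ factors through finitely many passes across the gluing region, each of which is realized by a generator $(\widehat{T}_b-\boldsymbol{\varepsilon}_k)(\alpha_j^{\pm1}\otimes\boldsymbol{y})$ of Proposition \ref{prop:generator}. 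So your proposal is not more complete than the paper in a rigorous sense, but it at least names and structures the missing step rather than deferring it.
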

\begin{proof}
We consider the closure of a braid $b$ and denote its complement in $S^3$ by $Y$. In this case $\mathcal{A}_{b, 1}$ is isomorphic to $\mathcal{S}_k/\widetilde{J}_b$, see Theorems \ref{thm:braid}. 
To make the connection to the skein module  $\Sigma$ of $Y$,  
define the map 
$f : \mathcal{S}_k^{\trace} \to \Sigma$ 
as follows. Choose a $k$ punctured disk $D\subset Y$ transversal to $b$ so that the strands of $b$ pass through the punctures of $D$. Using $D$ we can embed the bottom tangles into $Y$ and interpret them as skein elements. $f$ is surjective since we can use isotopies to bring all skeins of $Y$ into $D$. To finish the proof we need to make sure that the kernel of $f$ equals $\widetilde{J}_{b}\cap \mathcal{S}_{k,1}^{\trace}$.
\end{proof}

Note that  $\widetilde{A}_{b,1}$ is an $\widetilde{\mathcal{A}}_{b, 1}^{\trace} $-module.  
Let $\tau$ be the group homomorphism from $B_{2k}$ to $S_{2k}$, the symmetric group of degree  $2k$. 
Then $\overline{\boldsymbol{\mu}}_k(x_i\otimes T_b) = T_b \circ \overline{\boldsymbol{\mu}}_k(id^{\otimes k} \otimes x_{\tau(b)(i)})$ since 
\[
\overline{\boldsymbol{\mu}}_k(x_i\otimes T_{\sigma_j}) = \begin{cases}
T_{\sigma_j} \circ \overline{\boldsymbol{\mu}}_k(id^{\otimes k} \otimes x_{i+1}), & (i = j)
\\
T_{\sigma_j} \circ \overline{\boldsymbol{\mu}}_k(id^{\otimes k} \otimes x_{i-1}), & (i = j+1)
\\
T_{\sigma_j} \circ \overline{\boldsymbol{\mu}}_k(id^{\otimes k} \otimes x_{i}). & (\text{otherwise})
\end{cases}
\]
These relations come from 
\[
(\mathrm{ad} \circ \mu )\big(id \otimes \trace(id)\big) = \big((\mu\otimes id) \circ (id \otimes \Psi)\circ (\mathrm{ad}\otimes id)\big)\big(id \otimes \trace(id)\big),
\]
which is obtained by the second relation of the bottom row of Figure \ref{fig:propad} and the relation $\ad \circ \trace = \trace \otimes \eta$ explained in Figure
\ref{fig:permutation}.
\begin{figure}[htb]
\[
\begin{matrix}
\includegraphics[scale=0.6]{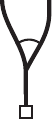}
\end{matrix}
\ \ =\ \ 
\begin{matrix}
\includegraphics[scale=0.8]{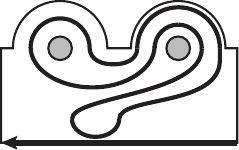}
\end{matrix}
\ \ =\ \ 
\begin{matrix}
\includegraphics[scale=0.8]{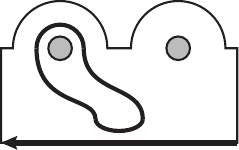}
\end{matrix}
\ \ =\ \ 
\begin{matrix}
\includegraphics[scale=0.8]{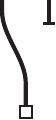}
\end{matrix}\ .
\qquad
\begin{matrix}
\\{}\\{}\\
\begin{matrix}
\includegraphics[scale=0.8]{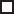}
\end{matrix}
=
\trace
\end{matrix}
\]
\caption{$\ad \circ \trace = \trace \otimes \eta$}
\label{fig:permutation}
\end{figure}
Therefore, if $\widehat b$ is a knot, then $\pi({x}_i) -\pi({x}_j) \in \widetilde{J}_b$ for any $1 \leq i, j \leq k$ and so $\pi(x_i)$ and $\pi(x_j)$ are the same element in $\widetilde{\mathcal{A}}_{b, 1}^{\trace}$.  
We put $x =\pi(x_1) = \pi(x_2) = \cdots = \pi(x_k)$.  
\par
Here we summarize some properties of $\trace$.  
\begin{figure}[htb]
\[
\begin{matrix}
\includegraphics[scale=0.8]{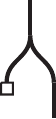}
\end{matrix}
\ = \ 
\begin{matrix}
\includegraphics[scale=0.8]{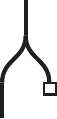}
\end{matrix} \ , 
\qquad
\begin{matrix}
\includegraphics[scale=0.8]{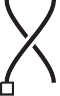}
\end{matrix}
\ = \ 
\begin{matrix}
\includegraphics[scale=0.8]{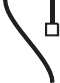}
\end{matrix} \ , 
\qquad
\begin{matrix}
\includegraphics[scale=0.8]{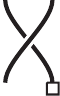}
\end{matrix}
\ = \ 
\begin{matrix}
\includegraphics[scale=0.8]{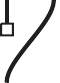}
\end{matrix} \ .
\]
\caption{Properties of $\trace$.}
\end{figure}
%
%
%
\subsection{$\widetilde{\mathcal{A}}_{b, 1}^{\trace}$-module structure of $\widetilde{\mathcal{A}}_{b, n}$}
Proposition \ref{prop:span} says  that the submodule 
 ${\mathcal{S}}_{k,1}$ of ${\mathcal{S}}_{k}$ is spanned by  $1=\varepsilon\otimes\eta^k$, $\alpha_{i_1}\alpha_{i_2}\cdots\alpha_{i_j}$
 $(i_1 < i_2 < \cdots < i_j)$
as an ${\mathcal{S}}_{k,1}^{\trace}$-module.  
Therefore, $\widetilde{\mathcal{A}}_{b, 1}$ is spanned by the images of $1=\varepsilon\otimes\eta^k$, $\alpha_{i_1}\alpha_{i_2}\cdots\alpha_{i_j}$
 $(i_1 < i_2 < \cdots < i_j)$ in $\widetilde{\mathcal{A}}_{b, 1}$
as an ${\mathcal{A}}_{k,1}^{\trace}$-module.  
\par
By Proposition \ref{prop:tensor}, there is a surjection 
$\overline{\boldsymbol{\mu} } : \otimes^n\mathcal{S}_{k,1} \to \mathcal{S}_{k,n}$,
which induces a surjection
$\widetilde{\boldsymbol{\mu} } : \otimes^n\widetilde{\mathcal{A}}_{b,1}\to \widetilde{\mathcal{A}}_{b,n}
$.  
Therefore, $\widetilde{\mathcal{A}}_{b,n}$ is spanned by the elements of the form $\widetilde{\mu}(a_1 \otimes \cdots \otimes a_n)$ as an ${\mathcal{A}}_{k,1}^{\trace}$-module where $a_j$ is one of $1=\varepsilon\otimes\eta^k$, $\alpha_{i_1}\alpha_{i_2}\cdots\alpha_{i_l}$
 $(i_1 < i_2 < \cdots < i_l)$.  
%
%
\subsection{Action of the skein algebra of thickened torus}
Let $b$ be an element of $B_{2k}$ whose plat closure $\widehat{b}$ is a knot.  
The skein module $\widetilde{\mathcal{A}}_b$ is isomorphic to the skein module of the complement of $\widehat{b}$, which is also isomorphic to the skein module of $S^3 \setminus N(\widehat{b})$ where $N(\widehat{b})$ is the regular neighborhood of $\widehat{b}$.  
The boundary $\partial (S^3 \setminus N(\widehat{b}))$ is isomorphic to a torus.  
Let $\mathcal{Q}$ be the  skein algebra of thickened torus.  
Let $u$, $v$ be skein elements of  $\widetilde{\mathcal{A}}_b$ and $\mathcal{Q}$ 
 respectively.  
By attaching $v$ to $u$ along $\partial (S^3 \setminus N(\widehat{b}))$, we get a skein element $u\cdot v \in \widetilde{\mathcal{A}}_b$.  
The product of two skein elements in $\mathcal{Q}$ is given by glueing two thickened tori at the outside of the one and the inside of the another one.  
\par
The structure of $\mathcal{Q}$ is given in \cite{FG} by using the quantum torus.  
Let $\widetilde{\mathcal{Q}}$ be the algebra of quantum torus which is given by
\[
\widetilde{\mathcal{Q}}
=
\mathbb{C}(\mu)\left< \lambda, \lambda^{-1}\right>
/ (\lambda\, \mu = t^2\, \mu\, \lambda).
 \]
 This comes from the ring of the quantum torus with localization by non-zero polynomials in $\mu$.  
 Let $\Theta$ be the involution of $\widetilde{\mathcal{Q}}$ sending $\lambda$ to $\lambda^{-1}$ and $\mu$ to $\mu^{-1}$.  
 Let $\widetilde{\mathcal{Q}}^\Theta$ be the subset of $\Theta$ invariant elements of $\widetilde{\mathcal{Q}}$ without denominator.  
 Then $\widetilde{\mathcal{Q}}^\Theta$ is a subalgebra of 
 $\widetilde{\mathcal{Q}}$ and is isomorphic to 
 $\mathcal{Q}$, which is generated by $x_l = -\lambda-\lambda^{-1}$, $x_m = -\mu-\mu^{-1}$ and $x_{lm} = t\,(\lambda\mu^{-1} + \mu\lambda^{-1})$. 
These generators  correspond to the skein elements of the thickened torus given in Figure \ref{fig:torusgenerator}.  
 Here we add  signs to $\lambda$ and $\mu$ to adjust them so that they mach the eigenvalues of the longitude and meridian in the classical case.  
The generators $x_l$ and $x_m$ represent  the longitude ant  the meridian respectively.  
\begin{figure}[htb]
\[
\begin{matrix}
\begin{matrix}
\includegraphics[scale=0.8]{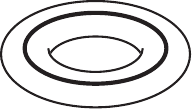}
\end{matrix}
&\qquad &
\begin{matrix}
\includegraphics[scale=0.8]{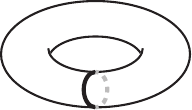}
\end{matrix}
&\qquad &
\begin{matrix}
\includegraphics[scale=0.8]{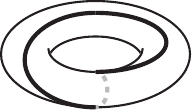}
\end{matrix}
\\
x_l & & x_m & & x_{lm}
\end{matrix}
\]
\caption{Generators of $\mathcal{Q}$.}
\label{fig:torusgenerator}
\end{figure}
\subsection{Action of the longitude}
Here we explain the action of the longitude $x_l$ of $\mathcal{Q}$ to $\widetilde{\mathcal{A}}_b$ using the trefoil as an example.  
The longitude of the trefoil knot is explained in Figure \ref{fig:longitude}.  
The picture (e) presents the standard longitude $x_l$ whose linking number with the core of the closed braid is zero, which is obtained by adding three twist around the second leftmost hole.  The picture (e) presents the longitude $x_{lm}$ which has linking number one with $\widehat{b}$.  
The skein element of $\widetilde{\mathcal{A}}_b$ sits on the horizontal plane.  
\begin{figure}[htb]
\[
\begin{matrix}
\begin{matrix}
\includegraphics[scale=0.6]{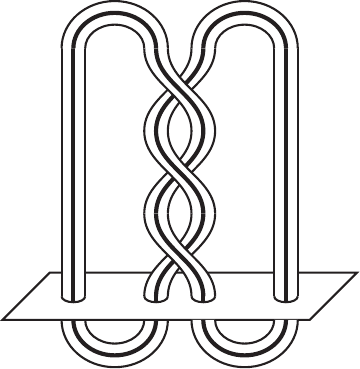}
\end{matrix}
&
\begin{matrix}
\includegraphics[scale=0.6]{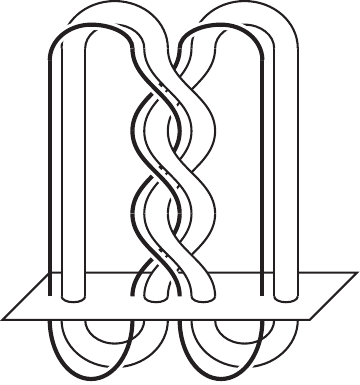}
\end{matrix}
&
\begin{matrix}
\includegraphics[scale=0.6]{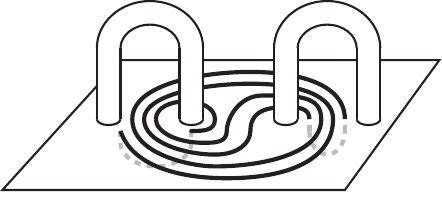}
\end{matrix}
\\
\text{(a) longitude}
&
\text{(b) pull forward a bit}
&
\begin{tabular}{l}
(c) push the middle part 
\\ \ \ downwards  to  the plane
\end{tabular}
\end{matrix}
\]
\[
\begin{matrix}
\begin{matrix}
\includegraphics[scale=0.6]{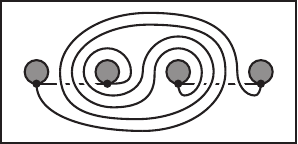}
\end{matrix}
& & \begin{matrix}
\includegraphics[scale=0.6]{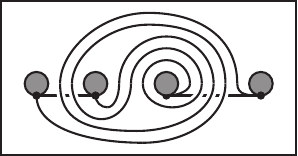}
\end{matrix}
& & 
\begin{matrix}
\includegraphics[scale=0.6]{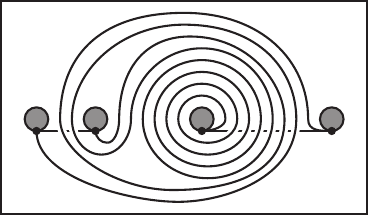}
\end{matrix}
& & 
\\
\text{(c) looking down (c)}
& & 
\text{(d) move a twist to right}
&  & 
\text{(e) $L_b$ :\ image of $x_l$}
&  &
\end{matrix}
\]
\[\begin{matrix}
\begin{matrix}
\includegraphics[scale=0.6]{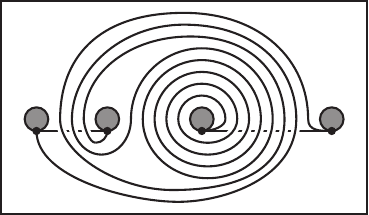}
\end{matrix}
& \qquad & 
\begin{matrix}
\includegraphics[scale=0.6]{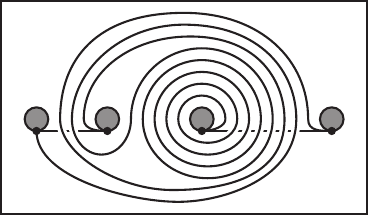}
\end{matrix}
\\
\text{(f) twisted longitude $x_{lm}$}
& &
\text{(g) $L_b'$ :\ image of $(-t^3)\, x_l$}
\end{matrix}\]
\caption{The action of the longitude $x_l$ of the trefoil.}
\label{fig:longitude}
\end{figure}
%
%
The action of the general case is explained in Figure \ref{fig:longitudegeneral}.  
The diagram $L_b$ representaing $x_l$ is obtained by applying the twists corresponding to the braid $b$ to $id^{\otimes 2k}$.
At the top of the picture, $2k$ holes are paired, and the antipode $S$ is applied to the right hole of each pair as  the right picture.    
The diagram $L_b'$ is obtained by one twist around a hole to $L_b$.   
\begin{figure}[htb]
\[
\begin{matrix}
\includegraphics[scale=0.8]{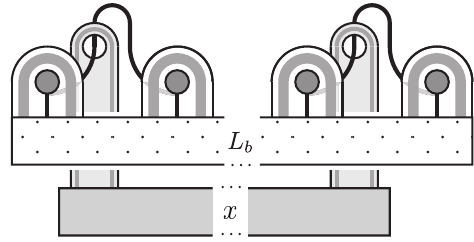}
\end{matrix}
\ = \ 
\begin{matrix}
\includegraphics[scale=0.8]{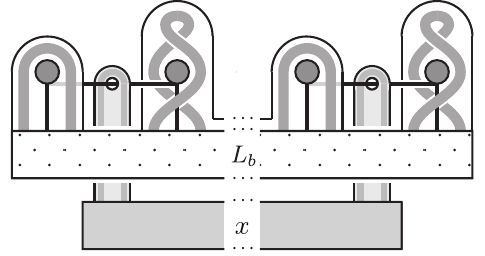}
\quad
x \in \widetilde{\mathcal{A}}_b
\end{matrix}
\]
\caption{The action of the longitude for general case.}
\label{fig:longitudegeneral}
\end{figure}
%
%
%
 \subsection{Colored Jones polynomial}
By removing the punctures from the image $L_b$ of  $x_l\cdot 1$, we get the  diagram of $\widehat{b}$.  
Similarly, $L_b'$ also corresponds to $\widehat{b}$.
Hence $\boldsymbol{\varepsilon}_k(L_b)$ and $\boldsymbol{\varepsilon}_k((-t^3)L_b')$ are both equal to the Jones polynomial of $\widehat{b}$.  
In general, we get the following.
\begin{lemma}
By removing the holes, we have
\[
\boldsymbol{\varepsilon}_k(x_l\cdot u) 
=
\boldsymbol{\varepsilon}_k((-t^3)x_{lm} \cdot u).  
\]
\label{lem:hole}
\end{lemma}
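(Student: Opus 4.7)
The plan is to reduce the identity to the standard Kauffman framing relation by realizing both longitudes as explicit planar diagrams and comparing them after closing off the punctures.

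First I would realize $x_l\cdot u$ and $x_{lm}\cdot u$ as concrete elements of $\widetilde{\mathcal{A}}_b$ using the pushdown pictured in Figures~\ref{fig:longitude} and~\ref{fig:longitudegeneral}. That is, I would represent $x_l\cdot u$ by the diagram $L_b$ stacked above (a representative of) $u$, and $x_{lm}\cdot u$ by the analogous diagram $L_b'$ stacked above $u$. By construction, $L_b$ is obtained from the longitude on the boundary torus with linking number zero with $\widehat b$, and $L_b'$ from the longitude with linking number one; after pushing both of them into the plane, $L_b$ and $L_b'$ agree everywhere except in a small local region near one strand of $\widehat b$.

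Next I would apply $\boldsymbol{\varepsilon}_k=\varepsilon^{\otimes k}\otimes id^{\otimes k}$. By the definition of $\boldsymbol{\varepsilon}_k$, the first $k$ ribbons (the "holes" at the top of the diagram) are collapsed by the counit, which pictorially removes the punctures of $D_{2k}$ corresponding to those ribbons. After this operation the longitude, which was sitting on the plane, becomes an honest parallel push-off of the closed braid $\widehat b$ inside $S^3$, with $u$ living in the complement. The global topology of the resulting framed link is the same in both cases, so the only possible discrepancy between $\boldsymbol{\varepsilon}_k(x_l\cdot u)$ and $\boldsymbol{\varepsilon}_k(x_{lm}\cdot u)$ lies in the framing of the push-off.

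Since $x_l$ has linking number $0$ and $x_{lm}$ has linking number $1$ with $\widehat b$, the local difference between $L_b$ and $L_b'$ is exactly one positive framing twist along a single strand. Applying the Kauffman framing identity
\[
\begin{matrix}\epsfig{file=twistp, scale=0.8}\end{matrix}\;=\;-t^{3}\;\begin{matrix}\epsfig{file=line, scale=0.8}\end{matrix}
\]
recorded at the start of Section~3.1, this twist removes at the cost of a scalar factor $-t^{3}$, yielding
\[
\boldsymbol{\varepsilon}_k(x_l\cdot u)\;=\;(-t^{3})\,\boldsymbol{\varepsilon}_k(x_{lm}\cdot u)\;=\;\boldsymbol{\varepsilon}_k\bigl((-t^{3})\,x_{lm}\cdot u\bigr),
\]
as claimed.

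The main obstacle is the bookkeeping of framings through the pushdown in the general case of Figure~\ref{fig:longitudegeneral}. One must carefully verify that the action of the braid $b$ on the parallel strands of the longitude does not introduce additional twists or sign ambiguities, so that the \emph{only} discrepancy between $L_b\cdot u$ and $L_b'\cdot u$ after applying $\boldsymbol{\varepsilon}_k$ is the single framing twist. The trefoil case in Figure~\ref{fig:longitude} is the prototype; the general argument requires checking that the compensating twists in $L_b$ (added to make the linking number vanish) behave as a single framing twist once the punctures are filled.
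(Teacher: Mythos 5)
Your argument is essentially the paper's own justification: the paper observes that after removing the punctures both $L_b$ (representing $x_l$) and $L_b'$ (representing $x_{lm}$, up to the twist) reduce to the same diagram of $\widehat{b}$ together with $u$, with the single extra winding around a hole becoming a kink worth $-t^{3}$ by the framing relation. Your proposal spells out this framing bookkeeping slightly more explicitly than the paper does, but it is the same proof.
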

Let $K_n$ be $\boldsymbol{\varepsilon}_k(x_l^n\cdot 1)$ and $K'_n$ be $\boldsymbol{\varepsilon}_k((-t^3)x_{lm}x_l^{n-1}\cdot 1)$, then they are equal to the Jones polynomial of $n$ parallel version of $\widehat{b}$.  
The $n$ parallel version of a knot $K$ is obtained from $K$ by replacing the string by $n$ parallel strings so that the linking number of any two strings is zero.  
\par
Let $T_n(x)$ be a version of the 
Chebyshev polynomial defined by 
\[
T_n(x) = x\, T_{n-1}(x) - T_{n-2}(x), \qquad
T_0(x) = 1, \qquad T_1(x) = x.  
\]
Then, for $x_l = -\lambda - \lambda^{-1}$,  we have 
\begin{equation}
T_n(x_l) =(-1)^n ( \lambda^n + \lambda^{n-2} + \cdots + \lambda^{-n}).
\label{eq:chebyshev}
\end{equation}  
The following is well-known.  
\begin{prop}
The colored Jones polynomial $J_{\widehat{b}}^{(n)}(t)$ is obtained from the Jones polynomials of  $j$ parallels of $\widehat{b}$ as follows.  
\[
J_{\widehat{b}}^{(n)} =
\sum_{j=n, n-2, \dots, 0\, \text{or}\,1} \, c_j^{(n)}\, J_{\widehat{b}^{(j)}}(t)
\]
where $\widehat{b}^{(j)}$ is the $j$ parallel version of $\widehat{b}$, and
$c_j^{(n)}$ is the $j$-th coefficient of $T_n(x)$.  
In other words, $J_{\widehat{b}}^{(n)}$ is given by
\begin{equation}
J_{\widehat{b}}^{(n)}(t) = 
\boldsymbol{\varepsilon}_k\left(T_n(x_l)\right) = 
(-1)^n\sum_{j=n, n-2, \dots, 0\, \text{or}\, 1}c_j^{(n)}\, \boldsymbol{\varepsilon}_k (x_l^j \cdot 1).  
\label{eq:coloredJones}
\end{equation}
\end{prop}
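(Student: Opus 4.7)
The argument splits into two independent parts: identifying $\boldsymbol{\varepsilon}_k(x_l^j \cdot 1)$ with the Kauffman bracket of the $j$-parallel $\widehat{b}^{(j)}$, and recognizing $\boldsymbol{\varepsilon}_k(T_n(x_l))$ as the Jones--Wenzl cabling that defines the colored Jones polynomial. My plan is to first argue geometrically that $x_l \cdot 1 \in \widetilde{\mathcal{A}}_b$ is represented by a zero-framed longitudinal push-off of $\widehat{b}$ sitting in the complement (cf.~Figure~\ref{fig:longitude}), so that iterating gives $j$ pairwise unlinked parallel copies. Applying $\boldsymbol{\varepsilon}_k$ caps off the disk structure (Lemma~\ref{lem:hole} style) and returns a closed skein class in $S^3$ whose Kauffman bracket is precisely $J_{\widehat{b}^{(j)}}(t)$. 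This gives the identification $\boldsymbol{\varepsilon}_k(x_l^j \cdot 1) = J_{\widehat{b}^{(j)}}(t)$.

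Next, I would invoke the standard Kauffman-bracket description of the colored Jones polynomial: $J^{(n)}_K(t)$ is obtained by threading $K$ with the closure of the $n$-th Jones--Wenzl idempotent inside a tubular neighborhood. In the skein module of the solid torus $S^1 \times D^2$ this closure is expressed by $T_n$ of the core circle, where $T_n$ is the polynomial of the proposition. This is the classical correspondence between the second-kind Chebyshev recursion and traces of Jones--Wenzl projectors in the Kauffman-bracket skein algebra of the annulus; the computation \eqref{eq:chebyshev} giving $T_n(x_l) = (-1)^n(\lambda^n+\lambda^{n-2}+\cdots+\lambda^{-n})$ in the quantum torus presentation is exactly the quantum character of the $(n+1)$-dimensional irreducible $U_q(\mathfrak{sl}_2)$-module. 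Applying this to our longitude variable $x_l$ on the boundary torus of $N(\widehat{b})$ yields $\boldsymbol{\varepsilon}_k(T_n(x_l)) = J^{(n)}_{\widehat{b}}(t)$.

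Finally, by $\mathbb{C}$-linearity of $\boldsymbol{\varepsilon}_k$ and the definition of $c_j^{(n)}$ as the coefficient of $x^j$ in $T_n(x)$, expanding the recursion gives
\[
\boldsymbol{\varepsilon}_k(T_n(x_l)) \;=\; \sum_{j} c_j^{(n)}\,\boldsymbol{\varepsilon}_k(x_l^{j}\cdot 1),
\]
and combining with the previous steps produces the cabling formula, with the overall sign $(-1)^n$ of \eqref{eq:coloredJones} tracking the discrepancy between the Kauffman-bracket normalization $x_l = -\lambda-\lambda^{-1}$ used here and the standard quantum-trace normalization of the $(n+1)$-dimensional module.

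The main obstacle is the middle step: a careful matching between $T_n(x_l)$ applied to the vacuum in $\widetilde{\mathcal{A}}_b$ and the Jones--Wenzl cabling of $\widehat{b}$. One must verify that the framing of the longitudinal push-off used to define $x_l$ agrees with the zero-framing implicit in the definition of $J^{(n)}$, and that the passage from the quantum-torus description of the boundary skein algebra to the Jones--Wenzl closure does not introduce additional normalization factors. Once these conventions are pinned down, the equalities reduce to the Chebyshev recursion and the linearity of $\boldsymbol{\varepsilon}_k$.
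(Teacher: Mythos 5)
The paper offers no argument for this proposition at all: it is introduced with ``The following is well-known'' and left unproved, so your sketch is supplying the missing standard argument rather than paralleling or diverging from one in the text. Your outline is the right one and its two ingredients are correctly separated. The first, $\boldsymbol{\varepsilon}_k(x_l^j\cdot 1)=J_{\widehat{b}^{(j)}}(t)$, is in fact already asserted in the paper in the sentence defining $K_n$ just above the proposition (via Lemma \ref{lem:hole} and the zero-framing discussion around Figure \ref{fig:longitude}), so you are re-deriving something the authors take as given; your phrase ``pairwise unlinked'' should be ``pairwise linking number zero,'' since the parallels are parallel push-offs, not a split link. The second ingredient --- that coloring by the $(n+1)$-dimensional module amounts to cabling by the closure of the $n$-th Jones--Wenzl idempotent, and that this closure equals $T_n$ of the core curve in the annulus skein algebra --- is correctly identified as the crux and correctly named as the second-kind Chebyshev recursion; you leave it as a citation plus a framing/normalization check ``to be pinned down,'' which is acceptable for a statement the paper itself labels classical, though it means your write-up is a proof outline rather than a complete proof.

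One point needs correcting. By linearity of $\boldsymbol{\varepsilon}_k$ and the definition of $c_j^{(n)}$ as the coefficient of $x^j$ in $T_n(x)$, one has exactly $\boldsymbol{\varepsilon}_k\big(T_n(x_l)\big)=\sum_j c_j^{(n)}\boldsymbol{\varepsilon}_k(x_l^j\cdot 1)$ with no extra sign, and this is consistent with the first display of the proposition together with $J_{\widehat{b}^{(j)}}=\boldsymbol{\varepsilon}_k(x_l^j\cdot 1)$. The factor $(-1)^n$ appearing in the last member of \eqref{eq:coloredJones} therefore cannot be produced by any normalization choice compatible with the rest of the proposition; it appears to be a typographical slip in the paper (the proof of the subsequent proposition uses $J_{\widehat{b}}^{(n)}=\boldsymbol{\varepsilon}_k(T_n(x_l)\cdot 1)$ with no such sign). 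Your attempt to account for that $(-1)^n$ as a discrepancy between the Kauffman-bracket and quantum-trace normalizations is therefore chasing an artifact: the sign $(-1)^n$ genuinely enters only later, in \eqref{eq:chebyshev}, when $T_n(x_l)$ is expanded in powers of $\lambda$ with $x_l=-\lambda-\lambda^{-1}$, not in the passage from $T_n(x_l)$ to the monomials $x_l^j$.
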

\subsection{Values of the counit}
We extend the algebra $\widetilde{\mathcal{A}}_b$ by $\widetilde{\mathcal{Q}} \otimes_{\mathcal{Q}}\widetilde{\mathcal{A}}_b$ and extend the counit $\boldsymbol{\varepsilon}_k$ to this extended algebra.
\par
For $x_m$, $x_m\cdot u = x\, u$ and 
$\boldsymbol{\varepsilon}_k(x_m \cdot u) = 
\boldsymbol{\varepsilon}_k(x)\boldsymbol{\varepsilon}_k(u)$.  
So 
we assign $\boldsymbol{\varepsilon}_k(\mu \cdot u) =  \boldsymbol{\varepsilon}_k(\mu) \boldsymbol{\varepsilon}_k( u)$ and $\boldsymbol{\varepsilon}_k(\mu^{-1} \cdot u) =  \boldsymbol{\varepsilon}_k(\mu^{-1}) \boldsymbol{\varepsilon}_k( u)$. 
Since $\mu \, \mu^{-1} = 1$ and $-\mu- \mu^{-1} = x$,  
 $\boldsymbol{\varepsilon}_k(\mu)\boldsymbol{\varepsilon}_k(\mu^{-1}) = \boldsymbol{\varepsilon}_k(1) = 1$ and $\boldsymbol{\varepsilon}_k(\mu) + \boldsymbol{\varepsilon}_k(\mu) ^{-1}=\boldsymbol{\varepsilon}_k(-x) = t^2 + t^{-2}$.  
Therefore $\boldsymbol{\varepsilon}_k(\mu)$ must be $t^2$ or $t^{-2}$.
Here we choose  $\boldsymbol{\varepsilon}_k(\mu) = t^2$.  
\par
Next, we investigate $\boldsymbol{\varepsilon}_k(\lambda^n\cdot 1)$.  
\begin{lemma}
For $\lambda$, we have
$\boldsymbol{\varepsilon}_k(\lambda^n\cdot 1) =
-\boldsymbol{\varepsilon}_k(\lambda^{-n-2}\cdot 1)$. 
\label{lem:inversion}
\end{lemma}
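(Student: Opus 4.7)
The plan is to reduce the lemma to the intermediate vanishing
\[
\boldsymbol{\varepsilon}_k\bigl(\lambda^{-1}\cdot u\bigr)=0 \qquad \text{for every } u\in \widetilde{\mathcal{A}}_b,
\]
and then to pick $u$ cleverly so that $\lambda^{-1}\cdot u$ unpacks to $(\lambda^n+\lambda^{-n-2})\cdot 1$. First I would apply Lemma \ref{lem:hole} to an arbitrary $u\in\widetilde{\mathcal{A}}_b$ and expand both sides using $x_l=-(\lambda+\lambda^{-1})$ and $x_{lm}=t(\lambda\mu^{-1}+\mu\lambda^{-1})$. This produces the identity
\[
\boldsymbol{\varepsilon}_k(\lambda u)+\boldsymbol{\varepsilon}_k(\lambda^{-1}u)
=t^4\bigl[\boldsymbol{\varepsilon}_k(\lambda\mu^{-1}u)+\boldsymbol{\varepsilon}_k(\mu\lambda^{-1}u)\bigr].
\]

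Next I would bring each term on the right to the normal form in which $\mu^{\pm 1}$ sits to the left of the $\lambda^{\pm 1}$, using the quantum-torus commutation $\lambda\mu^{-1}=t^{-2}\mu^{-1}\lambda$; the term $\mu\lambda^{-1}u$ is already in that form, while $\lambda\mu^{-1}u$ becomes $t^{-2}\mu^{-1}\lambda u$. Now the prescribed counit rule $\boldsymbol{\varepsilon}_k(\mu^{\pm 1}\cdot v)=t^{\pm 2}\boldsymbol{\varepsilon}_k(v)$ may be applied to yield $\boldsymbol{\varepsilon}_k(\lambda\mu^{-1}u)=t^{-4}\boldsymbol{\varepsilon}_k(\lambda u)$ and $\boldsymbol{\varepsilon}_k(\mu\lambda^{-1}u)=t^{2}\boldsymbol{\varepsilon}_k(\lambda^{-1}u)$. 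Substituting back, the $\boldsymbol{\varepsilon}_k(\lambda u)$ contributions cancel on both sides and one is left with $(1-t^{6})\boldsymbol{\varepsilon}_k(\lambda^{-1}u)=0$; since $t$ is assumed generic, this forces the intermediate vanishing.

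To conclude, I would observe that $\lambda^{n+1}+\lambda^{-n-1}$ is $\Theta$-invariant and hence lies in $\mathcal{Q}$, so that $u:=(\lambda^{n+1}+\lambda^{-n-1})\cdot 1$ is genuinely an element of $\widetilde{\mathcal{A}}_b$ rather than merely of the extension. In $\widetilde{\mathcal{Q}}$ one has $\lambda^{-1}(\lambda^{n+1}+\lambda^{-n-1})=\lambda^n+\lambda^{-n-2}$, so feeding this $u$ into the intermediate vanishing gives
\[
0=\boldsymbol{\varepsilon}_k(\lambda^{-1}\cdot u)=\boldsymbol{\varepsilon}_k(\lambda^n\cdot 1)+\boldsymbol{\varepsilon}_k(\lambda^{-n-2}\cdot 1),
\]
which is the claimed identity after rearrangement.

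The delicate step is the normal-form bookkeeping for $\mu$: the counit rule $\boldsymbol{\varepsilon}_k(\mu\cdot v)=t^{2}\boldsymbol{\varepsilon}_k(v)$ is consistent only once the $\mu^{\pm 1}$ factors have been moved to the leftmost position via $\lambda\mu=t^2\mu\lambda$, and it is precisely the asymmetry between $t^{-4}$ and $t^{6}$ produced by this reordering that makes the coefficient $(1-t^{6})$ non-vanishing. Apart from this point, the rest of the argument is a direct manipulation of Lemma \ref{lem:hole} together with the Weyl-type factorization $\lambda^n+\lambda^{-n-2}=\lambda^{-1}(\lambda^{n+1}+\lambda^{-n-1})$.
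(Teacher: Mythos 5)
Your argument is correct and is, at its core, the same computation the paper performs, but you have repackaged it in a cleaner way. The paper applies Lemma \ref{lem:hole} only to the specific elements $u=T_{n-1}(x_l)\cdot 1$, normal-orders the $\mu$'s exactly as you do, and arrives at $(1-t^4)\,\boldsymbol{\varepsilon}_k\bigl((\lambda^{n-2}+\lambda^{n-4}+\cdots+\lambda^{-n})\cdot 1\bigr)=0$; it then obtains the lemma by subtracting the identities for $n$ and $n-2$. Since $\lambda^{n-2}+\cdots+\lambda^{-n}=\pm\lambda^{-1}T_{n-1}(x_l)$, the paper's intermediate identity is precisely your vanishing $\boldsymbol{\varepsilon}_k(\lambda^{-1}\cdot u)=0$ specialized to Chebyshev inputs, and your choice $u=(\lambda^{n+1}+\lambda^{-n-1})\cdot 1=(-1)^{n+1}\bigl(T_{n+1}(x_l)-T_{n-1}(x_l)\bigr)\cdot 1$ replaces the paper's telescoping step. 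What your version buys is the isolation of the single mechanism $\boldsymbol{\varepsilon}_k(\lambda^{-1}\cdot u)=0$ for all $u\in\widetilde{\mathcal{A}}_b$, which makes the role of the genericity of $t$ transparent; what it costs is that you must invoke Lemma \ref{lem:hole} and the rule $\boldsymbol{\varepsilon}_k(\mu^{\pm1}v)=t^{\pm2}\boldsymbol{\varepsilon}_k(v)$ for arbitrary $u$ and for $v$ lying in the extension $\widetilde{\mathcal{Q}}\otimes_{\mathcal{Q}}\widetilde{\mathcal{A}}_b$ rather than only for monomials $\mu^a\lambda^b\cdot 1$ — though the paper relies on exactly the same implicit extension of the counit, so you are on equal footing. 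One small discrepancy worth flagging: you used the formula $x_{lm}=t(\lambda\mu^{-1}+\mu\lambda^{-1})$ from the paper's definition of the generators and obtained the coefficient $(1-t^6)$, whereas the paper's own proof uses $x_{lm}=t(\lambda\mu^{-1}+\lambda^{-1}\mu)$ (the $\Theta$-invariant normalization, which is the one consistent with $x_{lm}\in\mathcal{Q}=\widetilde{\mathcal{Q}}^{\Theta}$) and obtains $(1-t^4)$; the two conventions differ by the reordering $\mu\lambda^{-1}=t^{2}\lambda^{-1}\mu$, and since both coefficients are nonzero for generic $t$ the conclusion is unaffected either way.
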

\begin{proof}
Since 
$\boldsymbol{\varepsilon}_k\big(T_n(x_l)\cdot 1 - T_{n-2}(x_l)\cdot 1\big) 
= 
\boldsymbol{\varepsilon}_k\Big(x_l \cdot\big( T_{n-1}(x_l)\cdot 1\big)\Big)
=
\boldsymbol{\varepsilon}_k\Big((-t^3)x_{lm} \cdot\big( T_{n-1}(x_l)\cdot 1\big)\Big)$ 
by Lemma \ref{lem:hole}, 
$x_l = -\lambda-\lambda^{-1}$, $x_{lm}= t(\lambda\mu^{-1} +\lambda^{-1}\mu)$,  and \eqref{eq:chebyshev},   
we have
\begin{align*}
&\boldsymbol{\varepsilon}_k\Big(\big(x_l \, T_{n-1}(x_l) -
 (-t^3)x_{lm} \, T_{n-1}(x_l)\big)\cdot 1\Big) 
 = 
 \\
&\qquad
(-1)^n\boldsymbol{\varepsilon}_k\Big(\big(
\lambda^n + 2\, \lambda^{n-2} + 2\, \lambda^{n-4} + \cdots +
2 \, \lambda^{-n+2} + \lambda^{-n}\big)- 
\\
&\qquad\qquad
t^4\big(
(\lambda \mu^{-1}+ \lambda^{-1}\mu)\lambda^{n-1} + ( \lambda\mu^{-1} + \lambda^{-1}\mu) \lambda^{n-3} +  \cdots 
+ (\lambda\mu^{-1} + \lambda^{-1}\mu)  \lambda^{-n+1}\big)\cdot 1\Big)
=
\\
&\qquad
(-1)^n\boldsymbol{\varepsilon}_k\bigg(\Big(\big(
\lambda^n + 2\, \lambda^{n-2} + 2\, \lambda^{n-4} + \cdots +
2 \, \lambda^{-n+2} + \lambda^{-n}\big) - 
\\
&\qquad\qquad\qquad\qquad
t^2\big(
\mu^{-1}\lambda^{n} + (\mu +\mu^{-1})\lambda^{n-2}  + \cdots
+
(\mu + \mu^{-1})  \lambda^{-n+2} + \mu \lambda^{-n}\big)\Big)\cdot1 \bigg)
=  
\\
&\qquad
(-1)^n\boldsymbol{\varepsilon}_k\bigg(\Big(\big(
\lambda^n + 2\, \lambda^{n-2} + 2\, \lambda^{n-4} + \cdots +
2 \, \lambda^{-n+2} + \lambda^{-n}\big) - 
\\
&\qquad\qquad\qquad
\big(
\lambda^{n} + (1+t^4)\lambda^{n-2} + (1+t^4)  \lambda^{n-4} + \cdots 
+
(1+t^4)  \lambda^{-n+2} +t^4 \lambda^{-n}\big)\Big)\cdot 1\bigg)
=
\\
&\qquad\qquad\qquad\qquad\qquad\qquad\qquad
\quad
(-1)^n(1-t^4)\boldsymbol{\varepsilon}_k\big((\lambda^{n-2} + \lambda^{n-4} + \cdots + \lambda^{-n})\cdot 1)\big)
= 0.  
\end{align*}
Therefore, $\boldsymbol{\varepsilon}_k\big((\lambda^{n-2} + \lambda^{n-4} + \cdots + \lambda^{-n})\cdot 1\big) = 0$
for any integer $n$.  
This implies that 
$\boldsymbol{\varepsilon}_k\left(\lambda^{n}\cdot 1\right) =- \boldsymbol{\varepsilon}_k\left(\lambda^{-n-2}\cdot 1\right)$.  
\end{proof}
\begin{prop}
The colored Jones polynomial is given by
\[
J_{\widehat{b}}^{(n)}(t) =
(-1)^n\boldsymbol{\varepsilon}_k(\lambda^n\cdot 1).
\]
\end{prop}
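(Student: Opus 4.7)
The plan is to combine the Chebyshev expansion \eqref{eq:coloredJones} with the explicit formula \eqref{eq:chebyshev} for $T_n(x_l)$ in terms of $\lambda$, and then use Lemma \ref{lem:inversion} as a telescoping device to collapse the whole sum to the single term $\boldsymbol{\varepsilon}_k(\lambda^n\cdot 1)$.

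First I would substitute \eqref{eq:chebyshev} into the defining identity $J_{\widehat b}^{(n)}(t)=\boldsymbol{\varepsilon}_k(T_n(x_l)\cdot 1)$ to obtain
\[
J_{\widehat b}^{(n)}(t)=\boldsymbol{\varepsilon}_k(T_n(x_l)\cdot 1)=(-1)^n\sum_{j=0}^{n}\boldsymbol{\varepsilon}_k(\lambda^{\,n-2j}\cdot 1),
\]
so the statement reduces to showing that the inner sum equals $\boldsymbol{\varepsilon}_k(\lambda^n\cdot 1)$.

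Next I would apply Lemma \ref{lem:inversion}, which gives $\boldsymbol{\varepsilon}_k(\lambda^{-m}\cdot 1)=-\boldsymbol{\varepsilon}_k(\lambda^{m-2}\cdot 1)$ for every $m\geq 1$, to each term in the sum whose exponent is negative. Splitting the sum into nonnegative-exponent terms $\boldsymbol{\varepsilon}_k(\lambda^{n-2j}\cdot 1)$ for $0\le j\le\lfloor n/2\rfloor$ and negative-exponent terms $\boldsymbol{\varepsilon}_k(\lambda^{-m}\cdot 1)$ with $m$ of the same parity as $n$ and $1\le m\le n$, Lemma \ref{lem:inversion} converts the second group into $-\boldsymbol{\varepsilon}_k(\lambda^{m-2}\cdot 1)$. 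A change of variables $m-2=n-2j'$ shows these are exactly the negatives of the first group, except that the top term $\boldsymbol{\varepsilon}_k(\lambda^n\cdot 1)$ has no partner. Hence all terms cancel in pairs except $\boldsymbol{\varepsilon}_k(\lambda^n\cdot 1)$, and the sum equals $\boldsymbol{\varepsilon}_k(\lambda^n\cdot 1)$, yielding the claim.

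I do not expect any serious obstacle: the argument is essentially a finite telescoping once Lemma \ref{lem:inversion} is in hand. The only mild point to watch is that the parities match in both the even-$n$ and odd-$n$ cases (for $n$ odd one should check that the ``self-paired'' term $\boldsymbol{\varepsilon}_k(\lambda^{-1}\cdot 1)$, which Lemma \ref{lem:inversion} forces to vanish, does not appear in the sum except as part of the telescope), but this is a bookkeeping check and the cancellation pattern is the same in both parities.
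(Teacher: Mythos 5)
Your argument is correct and is essentially the paper's own proof: the paper likewise combines \eqref{eq:coloredJones}, \eqref{eq:chebyshev} and Lemma \ref{lem:inversion}, the only difference being that the cancellation you carry out by pairing $\lambda^{-m}$ with $\lambda^{m-2}$ is already recorded inside the proof of Lemma \ref{lem:inversion} as the vanishing of $\boldsymbol{\varepsilon}_k\big((\lambda^{n-2}+\lambda^{n-4}+\cdots+\lambda^{-n})\cdot 1\big)$. Your parity bookkeeping, including the observation that the self-paired term $\boldsymbol{\varepsilon}_k(\lambda^{-1}\cdot 1)$ is forced to vanish when $n$ is odd, is exactly the right check and closes the argument.
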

\begin{proof}
By \eqref{eq:coloredJones},  \eqref{eq:chebyshev} and Lemma \ref{lem:inversion}, we have
$J_{\widehat{b}}^{(n)}(t) = \boldsymbol{\varepsilon}_k\big(T_n(x_l) \cdot 1\big)
=
\boldsymbol{\varepsilon}_k\big((-1)^n(\lambda^n + \lambda^{n-2} + \cdots \lambda^{-n})\cdot 1\big)
=
(-1)^n\boldsymbol{\varepsilon}_k\big(\lambda^n\cdot 1\big)
$.  
\end{proof}
 \subsection{Recurrence relation of the colored Jones polynomial}
 \label{ss:recurrence}
Let $\rho$ be the map sending $x \in \mathcal{Q}$ to $x \cdot 1\in \widetilde{\mathcal{A}}_b$ and $\mathcal{J} = \ker \rho$.   
In some literatures, $\mathcal{J}$ is called the peripheral ideal.  
Let $\widehat{\mathcal{A}}_b = \widetilde{\mathcal{Q}}\otimes_{{\mathcal{Q}}}\mathrm{Im} \rho$, 
then $\widehat{\mathcal{A}}_b$ is isomorphic to $\widetilde{\mathcal{Q}}/\widetilde{\mathcal{J}}$, where  
 $\widetilde{\mathcal{J}}$ is the left ideal in $\widetilde{\mathcal{Q}}$ generated by $\mathcal{J}$. 
 Since $\widetilde{\mathcal{Q}}$ is a PID, $\widetilde{\mathcal{J}}$ is generated by a single element, say $G(\lambda, \mu)$, which is contained in $\mathcal{Q}$.  
 The image of $G(\mu, \lambda)$ is zero in $\widetilde{\mathcal{A}}_b$, 
 the image of $\lambda^n G(\mu, \lambda)$ is also zero.  
 Then $\boldsymbol{\varepsilon}_k\big(\lambda^n G(\mu, \lambda)\big)$ is a linear combination of the colored Jones polynomial giving the recurrence relation of the colored Jones polynomial. 
 The AJ conjecture predicts that $G(\lambda, \mu)$ coincide with the generator of the annihilating ideal of the colored Jones polynomial for most cases.  
See \cite{Le} for the detail of the AJ conjecture.   
\section{Examples}
\label{sec:examples}
In this section, we consider  two bridge knots and links, which are plat closure of four braids and their space of representations are quotients of the skein module of a disk with two punctures.  
In this section, we assume that $t$ is generic, in other words, $t$ is not a root of any algebraic equation. 
\subsection{Skein module of a disk with two punctures}
It is known by \cite{P} that ${\mathcal{S}}_{2, 1}^{\trace}$ is a polynomial ring $\mathcal{R}[x_1, x_2, x_{12}]$.
Propositions \ref{prop:span} and \ref{prop:spantrace} imply that the skein algebra
${\mathcal{S}}_{2, 1} $ is a free left
${\mathcal{S}}_{2, 1}^{\trace}$-algebra with basis $1$, $\alpha_1$, $\alpha_2$ and $\alpha_{12}$.  
Now let us investigate the structure of 
the module
$\widetilde{\mathcal{A}}_{b, 1} = {\mathcal{S}}_{k,1}/\widetilde{J}_{b, 1}$.  
Proposition \ref{prop:generator} says that $\widetilde{J}_{b, 1}$ is generated by $(T_b-\boldsymbol{\varepsilon}_k)(\alpha_j^{\pm1}\otimes \boldsymbol{y})$ for $j = 1$, $2$ and $\boldsymbol{y}\in \mathcal{S}_{i,1}$.  
\subsection{Action of $\alpha_1$ and $\alpha_2$}
Here we  investigate the right action of $\alpha_1$ and $\alpha_2$ on the left ${\mathcal{S}}_{2,1}^{\trace}$-module ${\mathcal{S}}_{2,1}$.  
%
By using Lemma \ref{lem:reductionformula}, the right action of $\alpha_1$ is given by
\begin{align*}
1\, \alpha_1&= \alpha_1, \\
\alpha_1 \, \alpha_1 &= - t^{-4} -t^{-2} \, x_1 \, \alpha_1 , \\
\alpha_2\, \alpha_1 &= -t^{-4}\, x_1\, x_2- t^{-6} \, x_{12}- 
t^{-2}\,  x_2\, \alpha_1  - t^{-2}\, x_1\, \alpha_2 
 -  t^{-4} \, \alpha_{12} 
,
  \\
\alpha_{12}\, \alpha_1 &= 
t^{-2} \, x_2-  t^{-2}\, x_{12} \, \alpha_1 + \alpha_2 
.
\end{align*}
The matrix corresponding to the action of $\alpha_1$ is
\[
A_1 = 
\begin{pmatrix}
0 & 1 & 0 & 0 \\
-t^{-4} & -t^{-2} \, x_1 & 0 & 0 \\
-t^{-4} \,x_1\,x_2 - t^{-6}\, x_{12} & -t^{-2}\, x_2\ & -t^{-2}\,x_1 & -t^{-4}  \\
t^{-2}\, x_2 & -t^{-2}\, x_{12} & 1 & 0
\end{pmatrix}
\]
For the classical case ($t = -1$), such action is given by Proposition 3.1 in \cite{BH}.  
Similarly, the right action of $\alpha_2$ is given by
\begin{align*}
1\, \alpha_2  &= \alpha_2, \\
\alpha_1 \, \alpha_2 &= 
\alpha_{12}, \\
 \alpha_2 \, \alpha_2 &= -t^{-4}  -t^{-2}\, x_2 \, \alpha_2, \\
 \alpha_{12}\, \alpha_2 &=
 -t^{-4}\, \alpha_1  - t^{-2} \, x_2\, \alpha_{12},
\end{align*}
and the corresponding matrix is
\[
A_2=
\begin{pmatrix}
0 &0 & 1 & 0
\\
0 & 0 & 0 & 1 
\\
-t^{-4} & 0 & -t^{-2}\,x_2 & 0 
\\
0 & -t^{-4} & 0 & -t^{-2}x_2
\end{pmatrix}.
\]
\subsection{Two-bridge knots and links}
Let $b \in B_4$, then the plat closure $\widehat{b}$ is a two-bridge knot or link.  
By the relation in Figure \ref{fig:trefoil1}, we have
\[
\alpha_1\circ S^2 = \alpha_1.
\]
Since $\alpha_1 \circ S^2 = -t^4 \, \alpha_1\circ S - t^{2} \, x_1$, we have 
$
\alpha_1 =  -t^4 \, \alpha_1\circ S - t^{2} \, x_1
=
t^8 \, \alpha_1 + t^6 \, x_1  - t^2 \, x_1
$
and we get
\[
\alpha_1 = -\dfrac{1}{t^2 + t^{-2}}\, x_1 
\]
if $t^8-1 \neq 0$.  
Similarly, we have 
$\alpha_2 = -\frac{1}{t^2 + t^{-2}}\, x_2$.  
\begin{figure}[htb]
\begin{multline*}
-(t^2 + t^{-2})\, 
\begin{matrix}
\includegraphics[scale=0.8]{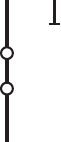}
\end{matrix}
\ = \ 
\begin{matrix}
\includegraphics[scale=0.8]{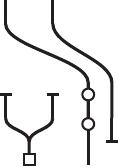}
\end{matrix}
\ \longleftrightarrow \ 
\begin{matrix}
\includegraphics[scale=0.8]{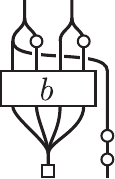}
\end{matrix}
\ = \
\begin{matrix}
\includegraphics[scale=0.8]{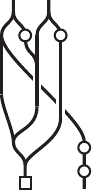}
\end{matrix}
\ = \
\\
\begin{matrix}
\includegraphics[scale=0.8]{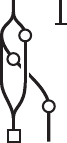}
\end{matrix}
\ = \
\begin{matrix}
\includegraphics[scale=0.8]{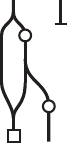}
\end{matrix}
\ \underset{HmR'}{=} \
\begin{matrix}
\includegraphics[scale=0.8]{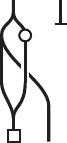}
\end{matrix}
\ \longleftrightarrow \
\begin{matrix}
\includegraphics[scale=0.8]{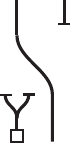}
\end{matrix}
\ = \
-(t^2 + t^{-2}) \, 
\begin{matrix}
\includegraphics[scale=0.8]{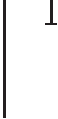}
\end{matrix}.
\end{multline*}
\caption{Relation for $\alpha_1$.}
\label{fig:trefoil1}
\end{figure}
\par
Next, we consider about $\alpha_1 \, x_{12}$.  
The relations given in Figure \ref{fig:alpha1x12} imply
\[
 x_{12}\, \alpha_1 = \alpha_1\, x_{12}. 
\]
\begin{figure}[htb]
\begin{multline*}
-(t^2 + t^{-2})\, 
\begin{matrix}
\includegraphics[scale=0.8]{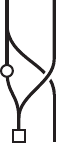}
\end{matrix}
\ = \ 
-(t^2 + t^{-2})\, 
\begin{matrix}
\includegraphics[scale=0.8]{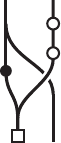}
\end{matrix}
\ = \ 
\begin{matrix}
\includegraphics[scale=0.8]{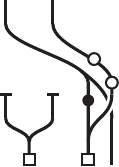}
\end{matrix}
\ = \ 
\begin{matrix}
\includegraphics[scale=0.8]{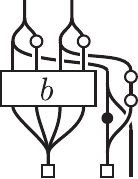}
\end{matrix}
\ = \ 
\begin{matrix}
\includegraphics[scale=0.8]{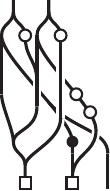}
\end{matrix}
\ = \ 
\\
\begin{matrix}
\includegraphics[scale=0.8]{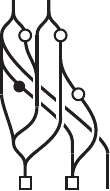}
\end{matrix}
\ = \ 
\begin{matrix}
\includegraphics[scale=0.8]{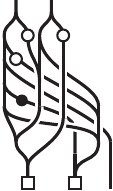}
\end{matrix}
\ = \ 
\begin{matrix}
\includegraphics[scale=0.8]{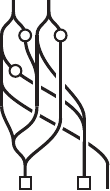}
\end{matrix}
\ = \ 
\begin{matrix}
\includegraphics[scale=0.8]{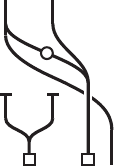}
\end{matrix}
\ = \ 
-(t^2 + t^{-2})\ 
\begin{matrix}
\includegraphics[scale=0.8]{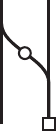}
\end{matrix} \ .
\end{multline*}
%
%
\caption{Explain the relation $x_{12}\, \alpha_1 = \alpha_1 \, x_{12}$.}
\label{fig:alpha1x12}
\end{figure}
Similarly, we get $x_{12}\, \alpha_2 =\alpha_2\, x_{12}$.  
These relations imply
\[
\alpha_{12}\, x_1 = \alpha_1 \, x_{12}, 
\qquad
\alpha_{12}\, x_2 = \alpha_2 \, x_{12}.  
\]
\subsection{Trivial knot}
Let $ b = \sigma_2\in B_4$.  
Then the plat closure of $b$ is the trivial knot, and $x_1 = x_2$. 
So we denote $x = x_1 = x_2$.  
As in Figure \ref{fig:trivialy}, we have
$
\alpha_{12} = 1$.  
\begin{figure}[htb]
\[
\begin{matrix}
\includegraphics[scale=0.8]{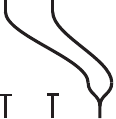}
\end{matrix}
\longleftrightarrow
\begin{matrix}
\includegraphics[scale=0.8]{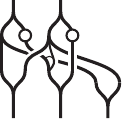}
\end{matrix}
=
\begin{matrix}
\includegraphics[scale=0.8]{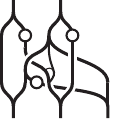}
\end{matrix}
\underset{\text{(bc)}}{=}
\begin{matrix}
\includegraphics[scale=0.8]{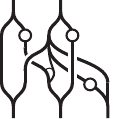}
\end{matrix}
=
\begin{matrix}
\includegraphics[scale=0.8]{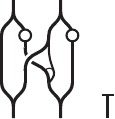}
\end{matrix}
\longleftrightarrow
\begin{matrix}
\includegraphics[scale=0.8]{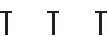}
\end{matrix}
\]
\caption{Relation of $\alpha_{12}$.}
\label{fig:trivialy}
\end{figure}
So $\widetilde{A}_{b, 1}$ is spanned by $1$, $x$, $x^2$, $\cdots$, and $\widetilde{A}_{b, 1}$ is isomorphic to $\widetilde{A}_{b}^{\trace}$.
\par
The longitude is given by (a) of Figure \ref{fig:trivial}, and the standard longitude is given by (b) of the same figure.  
Here we have a surjection from $\mathcal{Q}$ to $\widetilde A_{b}^{\trace}$
by sending $x \in \mathcal{Q}$ to $x \cdot 1 \in \widetilde A_{b}^{\trace}$.
Then the standard longitude is mapped to the trivial loop.  
\begin{figure}[htb]
\[
\begin{matrix}
\begin{matrix}
\includegraphics[scale=0.6]{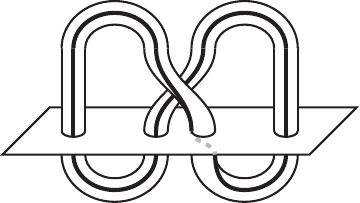}
\end{matrix}
&&
\begin{matrix}
\includegraphics[scale=0.6]{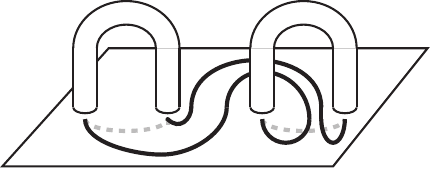}
\end{matrix}
&&
\begin{matrix}
\includegraphics[scale=0.8]{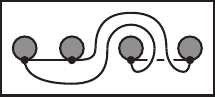}
\end{matrix}
\\
\text{standard}&& &&  \text{trivial}
\\
\text{longitude $x_l$}&& &&  \text{loop}
\end{matrix}
\]
\caption{Longitude of the trivial knot $\widehat b$.}
\label{fig:trivial}
\end{figure}
The image of $x_{lm}$ is given in Figure \ref{fig:lm} , which is equal to $-t^3 x$.  
The kernel of this map is the left ideal generated by $x_l - (-t^2 -t^{-2})$ and $x_{lm} +t^3\, x_m$.    
\begin{figure}[htb]
\[
\begin{matrix}
\begin{matrix}
\includegraphics[scale=0.6]{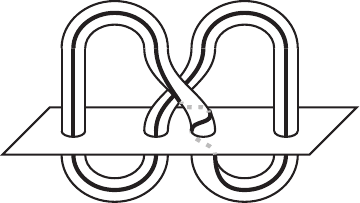}
\end{matrix}
&&
\begin{matrix}
\includegraphics[scale=0.6]{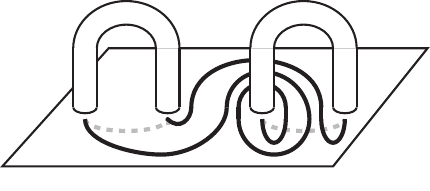}
\end{matrix}
&&
\begin{matrix}
\includegraphics[scale=0.8]{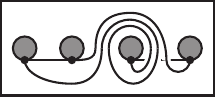}
\end{matrix}
\\
x_{lm} && && -t^3\,x
\end{matrix}
\]
\caption{The image of $x_{lm}$.}
\label{fig:lm}
\end{figure}
\subsection{Hopf link}
Let $b = \sigma_2^2 \in B_4$, then its plat closure $\widehat{b}$ is the Hopf link.  
The Hopf link is a two component link, and $\mathcal{Q}$ acts separately to each component.  
Let $\mathcal{Q}^{(1)}$, $\mathcal{Q}^{(2)}$ be algebras isomorphic to $\mathcal{Q}$.
Then $\mathcal{Q}^{(1)} \otimes \mathcal{Q}^{(2)}$ acts to $\widetilde{\mathcal{A}}_b^{\trace}$.  
\par
Figure \ref{fig:hopfalpha} implies that 
$S^{-1}(\alpha_1)S(\alpha_2) = S(\alpha_2)S(\alpha_1)$, so
\begin{align*}
S^{-1}(\alpha_1)S(\alpha_2) - S(\alpha_2)S(\alpha_1)
&=
-t^{-2} x_1 S(\alpha_2) - t^{-4} \alpha_1 S(\alpha_2) - S(\alpha_2)S(\alpha_1)
\\
&=
\alpha_{12} + t^4\alpha_{12}  +t^2  \alpha_2 x_1 + (\alpha_1 x_2)/t^2 + x_1 x_2 + t^2 y
\\&
=
\alpha_{12} + t^4\alpha_{12}  +  t^2 y = 0\quad(y = x_{12})
\end{align*}
since $\alpha_1 x_2 = x_1 x_2/(-t^2-t^{-2})$ and $\alpha_2 x_1 = x_1 x_2/(-t^2-t^{-2})$.  
\begin{figure}[htb]
\[
\begin{matrix}
\includegraphics[scale=0.7]{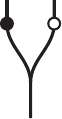}
\end{matrix}
=
\begin{matrix}
\includegraphics[scale=0.7]{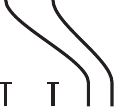}
\\
\circ
\\
\includegraphics[scale=0.8]{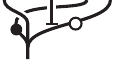}
\end{matrix}
\longleftrightarrow
\begin{matrix}
\includegraphics[scale=0.8]{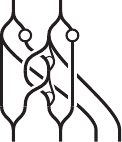}
\\[-3pt]
\circ
\\
\includegraphics[scale=0.8]{h11}
\end{matrix}
=
\begin{matrix}
\includegraphics[scale=0.7]{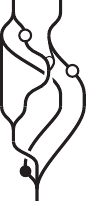}
\end{matrix}
=
\begin{matrix}
\includegraphics[scale=0.7]{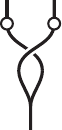}
\end{matrix}
\]
\caption{Relation of of $\widetilde{\mathcal{A}}_{b,1}^{\trace}$ to reduce $\alpha_{12}$ to $y$.}
\label{fig:hopfalpha}
\end{figure}
Hence we get 
\[
\alpha_{12} = -(t^2 + t^{-2})^{-1} y.
\]  
Therefore, $\widetilde{\mathcal{A}}_{b, 1}$ is isomorphic to $\widetilde{\mathcal{A}}_{b, 1}^{\trace}$.   
Moreover, Figure \ref{fig:hy2} implies that 
\[
y^2 = 
t^{-8} + 2 t^{-4}+1 - t^{-4} x_1^2 - t^{-4}x_2^2 - t^{-2}x_1 x_2 y.  
\]
\begin{figure}[htb]
\[
-t^2-t^{-2} = 
\begin{matrix}
\includegraphics[scale=0.7]{h0}
\\
\circ
\\
\includegraphics[scale=0.8]{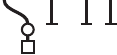}
\end{matrix}
\longleftrightarrow
\begin{matrix}
\includegraphics[scale=0.8]{h1}
\\[-3pt]
\circ
\\
\includegraphics[scale=0.8]{trefoil51}
\end{matrix}
\ =\ 
\begin{matrix}
\includegraphics[scale=0.7]{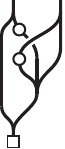}
\end{matrix} 
\ 
\begin{tabular}{l}
\\[5pt]
$=\trace(\alpha_1\alpha_2^{-1}\alpha_1^{-1}\alpha_2)$
\\[5pt]
$=
t^2 + t^6 - t^2 x_1^2 - t^2 x_2^2 - t^4 x_1 x_2 y - t^6 y^2$.
\end{tabular}
\]
\caption{Relation of $\widetilde{\mathcal{A}}_{b,1}^{\trace}$ to reduce $y^2$.}
\label{fig:hy2}
\end{figure}
Similar computations imply that $y^k$ $(k > 1)$ is expressed as $p_0(x_1, x_2) + p_1(x_1, x_2)\, y$, and $\widetilde{\mathcal{A}}_{b, 1}$ is generated by $1$, $y$ as $\mathcal{R}[x_1, x_2]$ module.  
Let $x_l^{(i)}$, $x_m{(i)}$ be elements of $\mathcal{Q}^{(i)}$ correponding to $x_l$, $x_m$ in $\mathcal{Q}$.
Then  $x_l^{(1)} \cdot 1 = x_2$, $x_m^{(1)} \cdot 1 = x_1$, $x_l^{(2)} \cdot 1 = x_1$ and $x_m^{(2)} \cdot 1 = x_2$ as in Figure \ref{fig:haction}.  
\begin{figure}[htb]
\[
\begin{matrix}
\includegraphics[scale=0.6]{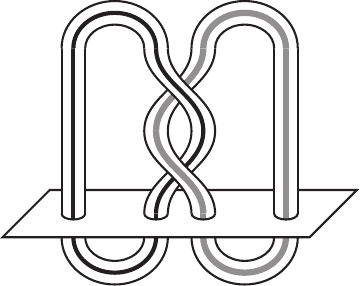}
\end{matrix}
\qquad
\begin{matrix}
\includegraphics[scale=1]{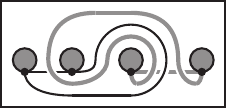}
\end{matrix}
\]
\caption{The action of the longitude $x_l^{(1)}$, $x_l^{(2)}$.}
\label{fig:haction}
\end{figure}
Then $\widetilde{\mathcal{A}}_{b,1}^{\trace}$ is generated by $1$ and $y$ as  $\mathcal{Q}^{(1)} \otimes \mathcal{Q}^{(2)}$ module.  
\subsection{Trefoil}
The trefoil knot is isotopic to $\widehat{b}$ where $b = \sigma_2^3 \in B_4$.  
Here we consider the module $\widetilde{\mathcal{A}}_b^{\trace'}=(S^{-1} \otimes id)(\widetilde{\mathcal{A}}_b^{\trace})$.  
By the relation in Figure \ref{fig:trefoil}, we have
\[ 
\trace\big(\alpha_2^{-1}\alpha_1^{-1}\alpha_2 \alpha_1
-
\alpha_1 \alpha_2\big)
=
-t^{-6}(y^2+(t^2 x^2 + t^6) y+2 t^4 x^2-t^4-1)
\in \widetilde{J}_b 
\]
\begin{figure}[htb]
\begin{multline*}
\begin{matrix}
\includegraphics[scale=0.8]{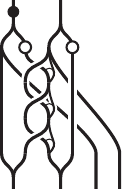}
\\[-5pt]
\circ
\\
\includegraphics[scale=0.8]{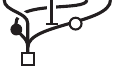}
\end{matrix}
\ = \ 
\begin{matrix}
\includegraphics[scale=0.8]{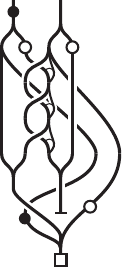}
\end{matrix}
\ = \
\begin{matrix}
\includegraphics[scale=0.8]{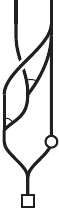}
\end{matrix}
\ = \
\begin{matrix}
\includegraphics[scale=0.8]{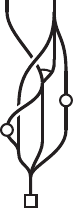}
\end{matrix}
\ = \
\begin{matrix}
\includegraphics[scale=0.8]{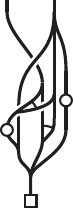}
\end{matrix}
\ \underset{\text({bc)}}{=} \
\begin{matrix}
\includegraphics[scale=0.8]{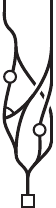}
\end{matrix}
\ = \
\begin{matrix}
\includegraphics[scale=0.8]{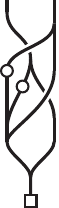}
\end{matrix}
\\
\ = \ 
\trace(\alpha_2^{-1}\alpha_1^{-1}\alpha_2\alpha_1)
\ = -t^{-6}y^2-t^{-4} x^2 y-2 t^{-2} x^2 + t^{-2}+t^{-6},
\end{multline*}
\[
\begin{matrix}
\includegraphics[scale=0.8]{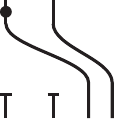}
\\[-5pt]
\circ
\\
\includegraphics[scale=0.8]{trefoil11}
\end{matrix}
\ = \ 
\begin{matrix}
\includegraphics[scale=0.8]{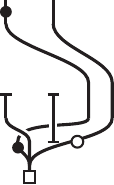}
\end{matrix}
\ = \ 
\begin{matrix}
\includegraphics[scale=0.8]{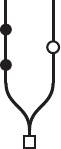}
\end{matrix}
\ = \ 
\begin{matrix}
\includegraphics[scale=0.8]{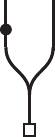}
\end{matrix}
= 
\begin{matrix}
\includegraphics[scale=0.8]{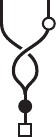}
\end{matrix}
=
\begin{matrix}
\includegraphics[scale=0.8]{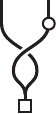}
\end{matrix}
 = 
\trace(\alpha_2^{-1}\alpha_1) = 
-t^{-4}y - t^{-2} x^2.
\]
\caption{Relation of $\widetilde{\mathcal{A}}_b^{\trace'}$ to reduce $y^2$. }
\label{fig:trefoil}
\end{figure}
where $x$, $y$ are images of $x_1$, 
$x_{12}$ in $\widetilde{\mathcal{S}}_{2,1}^{\trace}$,  $\alpha_j^{-1}$ is $\alpha_j\circ S$ since $\alpha_j (\alpha_j\circ S) = (\alpha_j\circ S)\alpha_j = id$.  
This relation implies that
\begin{equation}
y^2= 
t^2(1  -  x^2) y  - t^4 x^2+ t^4+1.
\label{eq:y2}
\end{equation}
in $\widetilde{\mathcal{A}}_b^{\trace}$.  
\par
By the relation in Figure \ref{fig:trefoil3}, we get
\begin{figure}[htb]
\begin{multline*}
\begin{matrix}
\includegraphics[scale=0.7]{trefoil1}
\\[-5pt]
\circ
\\
\ \ \includegraphics[scale=0.7]{trefoil51}
\end{matrix}
\!\!= 
\begin{matrix}
\includegraphics[scale=0.75]{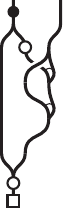}
\end{matrix}
\ =\  
\begin{matrix}
\includegraphics[scale=0.8]{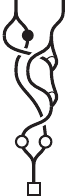}
\end{matrix}
\ = \ 
\begin{matrix}
\includegraphics[scale=0.8]{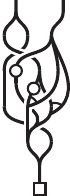}
\end{matrix}
\ \underset{\text{(bc)}}{=} \ 
\begin{matrix}
\includegraphics[scale=0.8]{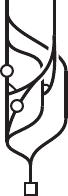}
\end{matrix}
\ = \ 
\begin{matrix}
\includegraphics[scale=0.8]{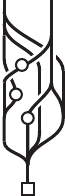}
\end{matrix}
\\
\ = \ 
\trace(\alpha_2^{-1}\alpha_1^{-1}\alpha_2^{-1}\alpha_1\alpha_2\alpha_1)
=
-t^{-8}
\big(y^3 + t^2 x^2 y^2 + (2 t^4 x^2 - t^4 -2)y + (2 t^6-t^2) x^2\big)
, 
\end{multline*}
\[
\begin{matrix}
\includegraphics[scale=0.8]{trefoil21}
\\[-5pt]
\circ
\\
\ \ \includegraphics[scale=0.8]{trefoil51}
\end{matrix}
= -t^2-t^{-2}. 
\]
\caption{Relation of $\widetilde{\mathcal{A}}_b^{\trace'}$ to reduce $y^{3}$.  
 }
\label{fig:trefoil3}
\end{figure}
\[
\big(y^3 + t^2 x^2 y^2 + (2 t^4 x^2 - t^4 -2)y + (2 t^6-t^2) x^2\big)
- t^{10} - t^6 
   \in (S^{-1} \otimes id)(\widetilde{J}_b). 
\]
Combining this and \eqref{eq:y2}, we get
\begin{equation}
y^3 = (2 + t^4 - 3 t^4 x^2  + t^4 x^4) y
+t^6 + t^{10} - 3 t^6 x^2 + 
 t^6 x^4
 \label{eq:y3}
\end{equation}
in $\widetilde{A}_b^{\trace'}$. 
Similarly, $y^k$ $(k > 3)$ can also reduced and 
$\widetilde{A}_b^{\trace'}$ is generated by $1$ and $y$ as a $\mathcal{R}[x]$ module.  
\subsection{Action of the longitude of the trefoil}
Let $L$ be the longitude of the trefoil.  
Then $L$ is a skein element of the thickened torus acting the boundary of the complement of the trefoil, and $L$ corresponds to $x_l$ in $\mathcal{Q}$.  
To get the $A_q$ polynomial of the trefoil, we first compute the action of $L$ to $x^n$.  
Let $L_n$ and $L'_n$ be the diagrams as in Figure \ref{fig:L1}.  
\begin{figure}[htb]
\[
L_n : 
\begin{matrix}
\includegraphics[scale=0.8]{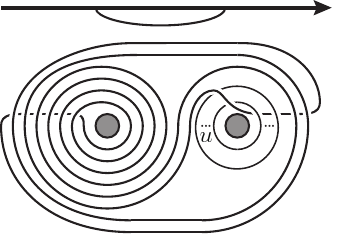}
\end{matrix}\ ,
\qquad
L'_n :
\begin{matrix}
\includegraphics[scale=0.8]{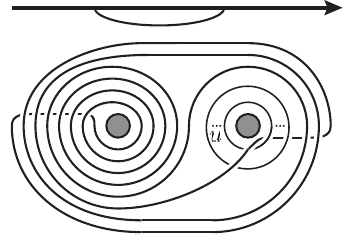}
\end{matrix}\ .  
\]
\caption{The diagrams $L_n = L \cdot x^n= x_l\cdot x^n$ and $L'_n= (-t^3)x_{lm}\cdot x^n$.}
\label{fig:L1}
\end{figure}
Here $L_n = L \cdot x^n$ is the result of the action of $L$ to $x^n$. 
The skein diagrams $L_n$ and $L'_n$ satisfy the following recursive relations.  
\begin{equation}
\begin{cases}
L_n = t^{2}\, x \, L_{n-1} + t^{-2}\, (t^2 - t^{-2}) \, L'_{n-1},
\\
L'_n = -t^{2}\, (t^2-t^{-2}) \, L_{n-1} + t^{-2}\, x \, L'_{n-1}.
\end{cases}
\label{eq:recursion}
\end{equation}
Then we can get $L_n$ and $L'_n$ from $L_0 = L$ and $L'_0$.  
Note that $L_0'$ is equal to $L_b'$ which is given in Figure \ref{fig:longitude} (g).  
The skein diagram $L$ is reduced as in Figures \ref{fig:L2}.  
The skein diagram $L'$ is deformed as in Figure \ref{fig:L3} by moving a twist around the left hole to right hole.  
These diagrams are computed as in these figures and we get the followoing.
\[
\begin{cases}
L = -t^2 - t^6 + 6 t^6 x^2 - 5 t^6 x^4 + 
 t^6 x^6 + (-1 + t^4 - 3 t^4 x^2 + t^4 x^4) y,
\\
L'_0 =  -t^6 x (3 t^2 - 4 t^2 x^2 + t^2 x^4) - t^6 x (-2 + x^2) y.
\end{cases}
\]
\begin{figure}[htb]
\[
\begin{matrix}
\includegraphics[scale=0.7]{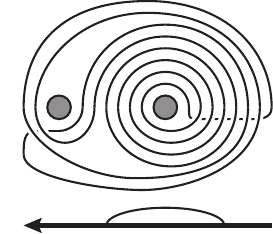}
\end{matrix}
\ = \ 
t^{-2}\, 
\begin{matrix}
\includegraphics[scale=0.7]{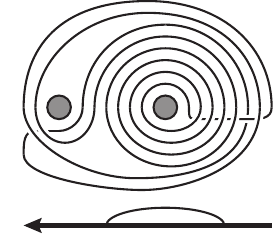}
\end{matrix}
+
t^{-1} \, (t^{2} - t^{-2})\, 
\begin{matrix}
\includegraphics[scale=0.7]{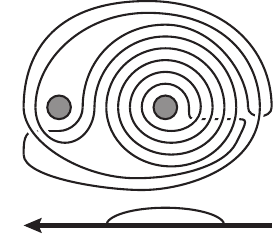}
\end{matrix}
\]
\begin{align*}
&= 
t^{-2} \, (-t^{3})^{2} \,\trace(\alpha_2^{-1}\alpha_1^{-1}\alpha_2^4\alpha_1^{-1}\alpha_2^{-1})
+
t^{-1} \, (-t^{3})^5\, (t^{2} - t^{-2})\, 
\trace(\alpha_2^{-1}\alpha_1^{-1}\alpha_2^5\alpha_1)
\\
&= 
-t^2 - t^6 + 6 t^6 x^2 - 5 t^6 x^4 + 
 t^6 x^6 + (-1 + t^4 - 3 t^4 x^2 + t^4 x^4) y.
 \end{align*}
\caption{Longitude $L$ of the trefoil in $\widetilde{A}^{\trace'}_b$.}
\label{fig:L2}
\end{figure}
\begin{figure}[htb]
\begin{align*}
&\begin{matrix}
\includegraphics[scale=0.7]{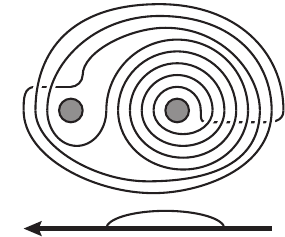}
\end{matrix}
\ = \ 
t^{-2}\begin{matrix}
\includegraphics[scale=0.7]{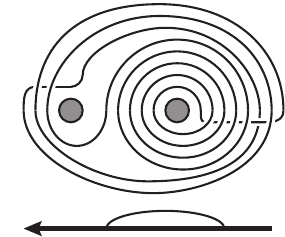}
\end{matrix}
\ + \ 
t^{-1}(t^2-t^{-2})
\begin{matrix}
\includegraphics[scale=0.7]{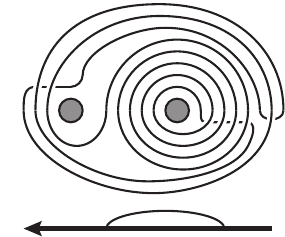}
\end{matrix}
\\
&= 
t^{-2} \, (-t^{3})^{2} \,\trace(\alpha_2^{-1}\alpha_1^{-1}\alpha_2^4\alpha_1^{-1}\alpha_2^{-2}\alpha_1^{-1})
+
t^{-1} \, (-t^{3})^5\, (t^{2} - t^{-2})\, 
\trace(\alpha_2^{-1}\alpha_1^{-1}\alpha_2^5)
\\
    &=
 -t^6 x (3 t^2 - 4 t^2 x^2 + t^2 x^4) - t^6 x (-2 + x^2) y .
\end{align*}
\caption{The skein diagram $L'_0$ in $\widetilde{A}^{\trace'}_b$.}
\label{fig:L3}
\end{figure}
\par
Next, we compute $L \cdot x^n y$.  
Let $\Lambda_n = L \cdot x^n y$ and $\Lambda'_n$ be the skein diagrams given in Figure \ref{fig:lambda}.  
\begin{figure}[htb]
\[
\Lambda_n : 
\begin{matrix}
\includegraphics[scale=0.8]{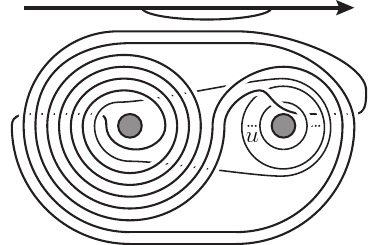}
\end{matrix}\ ,
\qquad
\Lambda'_n :
\begin{matrix}
\includegraphics[scale=0.8]{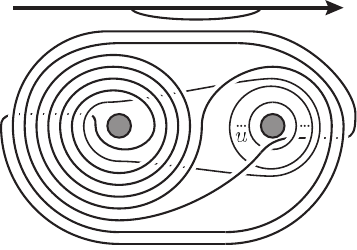}
\end{matrix}\ .  
\]
\caption{The skein diagrams $\Lambda_n$ and $\Lambda'_n$.}
\label{fig:lambda}
\end{figure}
Then  $\Lambda_n$ and $\Lambda'_n$ satisfy the following recursive relation.  
\begin{equation}
\begin{cases}
\Lambda_n = t^{2}\, x \, \Lambda_{n-1} + t^{-2}\, (t^2 - t^{-2}) \, \Lambda'_n,
\\
\Lambda'_n = -t^{-2}\, (t^2-t^{-2}) \, \Lambda_{n-1} + t^2\, x \, \Lambda'_{n-1}.
\end{cases}
\label{eq:Lambdarecursion}
\end{equation}
Hence $\Lambda_n$ and $\Lambda'_n$ can be obtained from $\Lambda_0$ and $\Lambda'_0$.   The skein diagram  $\Lambda_0$  is reduced as in Figures \ref{fig:lambda1}.  
The skein diagram $\Lambda_0'$ is obtained by adding $\alpha_1^{-1}$ at the end of the each term of the second last formula in Figure \ref{fig:lambda1}, and this operation is equivalent to remove $\alpha_2$ which presents the twist around the knot. 
Therefore
\begin{multline*}
\Lambda_0' = 
-t^{10} \trace(\alpha_2^{-1}\alpha_1^{-1}\alpha_2^3)
+t^{16}(t^2-t^{-2})
\trace(\alpha_2^{-1}\alpha_1^{-1}\alpha_2^4\alpha_1\alpha_2\alpha_1)
+ 
\\
-t^{-2} 
\trace(\alpha_2^{-1}\alpha_1^{-1}\alpha_2^3\alpha_1^{-1}\alpha_2^{-1}\alpha_1^{-1}\alpha_2^{-1})
\ + 
t^{8}(t^2-t^{-2})
\trace(\alpha_2^{-1}\alpha_1^{-1}\alpha_2^3)
\end{multline*}
and
\[
\begin{cases}
\Lambda_0 =
-\frac{1}{t^2}(t^2 - t^{10} + 6 t^{10} x^2 - 5 t^{10} x^4 + t^{10} x^6)
  - \frac{1}{t^2}(1 + t^8 - 3 t^8 x^2 + t^8 x^4) y,
\\[5pt]
\Lambda'_0 = x (t^2 + 3 t^{10} - 4 t^{10} x^2 + t^{10} x^4) + x (1 - 2 t^8 + t^8 x^2) y.
\end{cases}
\]
\begin{figure}[htb]
\begin{multline*}
\begin{matrix}
\includegraphics[scale=0.6]{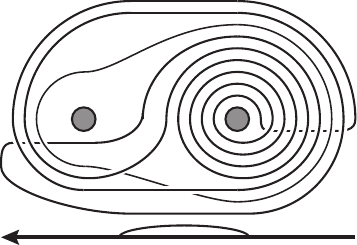}
\end{matrix}
\ = \ 
t\, 
\begin{matrix}
\includegraphics[scale=0.6]{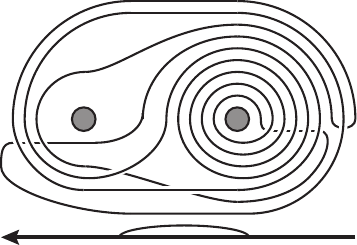}
\end{matrix}
\ + \ 
t^{-1}
\begin{matrix}
\includegraphics[scale=0.6]{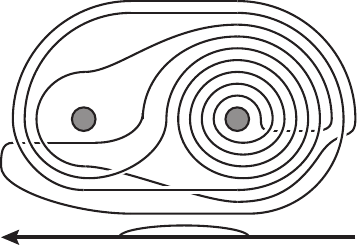}
\end{matrix}
\\
\ = \ 
t (-t^3)^3\trace(\alpha_2^{-1}\alpha_1^{-1}\alpha_2^4)
\ +
t^{-2}(t^2-t^{-2})
\begin{matrix}
\includegraphics[scale=0.6]{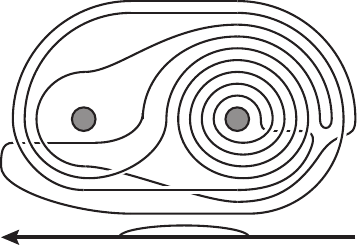}
\end{matrix}
\ + \ 
t^{-3}
\begin{matrix}
\includegraphics[scale=0.6]{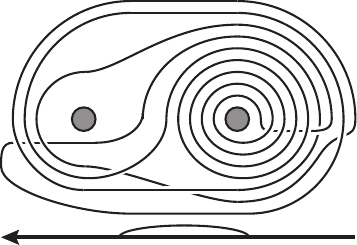}
\end{matrix}
\\
 = 
-t^{10}\trace(\alpha_2^{-1}\alpha_1^{-1}\alpha_2^4)
+t^{-2}(t^2-t^{-2})(-t^3)^6
\trace(\alpha_2^{-1}\alpha_1^{-1}\alpha_2^5\alpha_1\alpha_2\alpha_1)
+ \qquad\qquad\qquad\qquad
\\
t^{-5} 
\begin{matrix}
\includegraphics[scale=0.6]{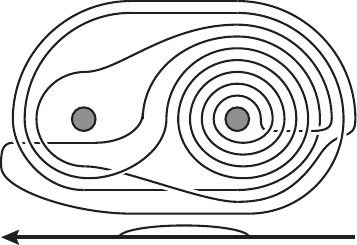}
\end{matrix}
\ + 
t^{-4}(t^2-t^{-2})
\begin{matrix}
\includegraphics[scale=0.6]{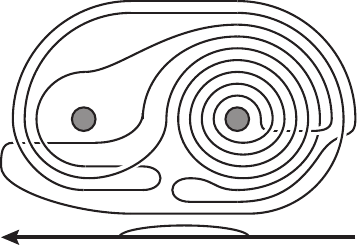}
\end{matrix}
\\
 = 
-t^{10} \trace(\alpha_2^{-1}\alpha_1^{-1}\alpha_2^4)
+t^{16}(t^2-t^{-2})
\trace(\alpha_2^{-1}\alpha_1^{-1}\alpha_2^5\alpha_1\alpha_2\alpha_1)
+ \qquad\qquad\qquad\qquad\qquad
\\
t^{-5} (-t^3)
\trace(\alpha_2^{-1}\alpha_1^{-1}\alpha_2^4\alpha_1^{-1}\alpha_2^{-1}\alpha_1^{-1}\alpha_2^{-1})
\ + 
t^{-4}(t^2-t^{-2})(-t^3)^4
\trace(\alpha_2^{-1}\alpha_1^{-1}\alpha_2^4)
\\
=
-\frac{1}{t^2}(t^2 - t^{10} + 6 t^{10} x^2 - 5 t^{10} x^4 + t^{10} x^6)
  - \frac{1}{t^2}(1 + t^8 - 3 t^8 x^2 + t^8 x^4) y.
\end{multline*}
\caption{Reduction of $\Lambda_0$.}
\label{fig:lambda1}
\end{figure}
\begin{figure}[htb]
\begin{align*}
&\begin{matrix}
\includegraphics[scale=0.7]{lambdadn}
\end{matrix}
= 
\begin{matrix}
\includegraphics[scale=0.7]{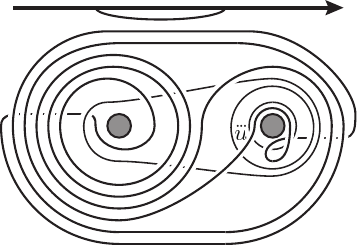}
\end{matrix}
= 
-t^3 
\begin{matrix}
\includegraphics[scale=0.7]{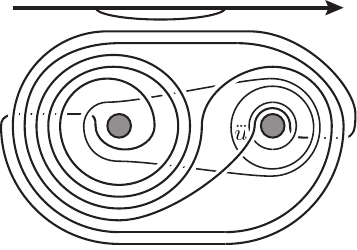}
\end{matrix}
\\
&=
t^{-5} \, (-t^3) \, 
\trace(\alpha_2^{-1}\alpha_1^4\alpha_2^{-1}\alpha_1^{-2}\alpha_2^{-1}\alpha_1^{-1})
+
t^{-4}\, (t^2-t^{-2})\,(-t^3)^4 \,
\trace(\alpha_2^{-1}\alpha_1^5\alpha_2\alpha_1^{-1}\alpha_2^{-1}\alpha_1^{-1})
\\
&\qquad\qquad+
t^{-2}\,(t^2-t^{-2})\,(-t^3)^6\,
\trace(\alpha_2^{-1}\alpha_1^4)
+ t \, (-t^3)^3 \, 
\trace(\alpha_2^{-1}\alpha_1^3)
\\
&=
x (-t^2 + t^6 + 4 t^{10} + 2 t^2 x^2 - t^6 x^2 - 5 t^{10} x^2 - t^2 x^4 + 
    t^6 x^4 + t^{10} x^4) 
\\& \qquad\qquad
+ x (1 - 2 t^8 - x^2 + t^4 x^2 + t^8 x^2) y.
\end{align*}
\caption{Reduction of $\Lambda_0'$.}
\label{fig:lambdad0}
\end{figure}
\par\noindent
%
%
%
\subsection{$A_q$ polynomial}
Here we first obtain a relation between $L$ and $L_0'$ whose coefficients are polynomials of $x$ by eliminating $y$ from $L\cdot 1$ and $L'_0 \cdot 1$.  
The resulting relation is 
\[
L \cdot\left(t^{16} x^3-2 t^{16}
   x\right)+
  L'_0\cdot \left(t^{12} x^4-3 t^{12}
   x^2+t^{12}-1\right)+
   t^{12} x-t^8 x^5+5 t^8 x^3-5 t^8 x=0.  
   \]
 Next, substitute 
 $L = \lambda+\lambda^{-1}$, 
 $L'_0 = (-t^3) t (\lambda\mu^{-1} + \mu\lambda^{-1})$ 
 and 
 $x = -\mu-\mu^{-1}$, we get a polynomial of $\lambda$ with coefficients of polynomials of $\mu$ as follows.  
 \begin{equation}
  \lambda\cdot (\mu^{-1} -t^{12} \mu^{-5})+
  (  t^4\, \mu^{5}- t^8\, \mu  -  t^8\, \mu^{-1}+t^4\, \mu^{-5} ) 
   +\lambda^{-1}\cdot( \mu - t^{12}\, \mu^{5} )=0.  
   \label{eq:Aq1}
 \end{equation}
 As explained in \S\S\ref{ss:recurrence}, this polynomial gives the recurrence relation of the colored Jones polynomial.  
Actually, this relation coincides with the $A_q$ polynomial of the trefoil knot given in \cite{Le}, which is
 \[
 (t^4M^{10} - M^6)L^2 - (t^2M^{10} + t^{-18}- t^{-10}M^6 - t^{-14}M^4)L + (t^{-16} - t^{-4}M^4).
 \]
Our formula is obtained by substituting 
$M = \mu$, 
 $L = -t^6 \, \lambda \, \mu^{-6}$ and multiply by $t^{16}\, \mu$.  
Since  $L$ in \cite{Le} correspond to the standard longitude with three twist around the knot, $L$ and $\lambda$ are different.  
\section{Final remarks}
We  constructed the quantization of the knot group $G$, and also constructed a $q$-deformation of the $SL(2, \mathbb{C})$ representation of $G$ by using Kauffman bracket skein algebra.  
Such skein theory is also constructed for other linear algebraic groups, and $q$-deformations of the representations to such groups can be constructed by imposing the corresponding skein theory to the module $\mathcal{A}_b$.  
\par
In \cite{F}, Faitg investigate the action of mapping class group of a surface by using algebras including the stated skein algebra.  
His construction is a kind of dual of our method here.  
We only consider puncture disks and investigate the action of the braid group by using bottom tangles.  
This construction may be generalized to  the mapping class groups of higher genus surfaces.  
\par
It is proved that the skein module $\mathcal{S}(M)$ of a closed 3-manifold $M$ is finite dimensional $\mathbb{C}(t)$ module in \cite{GJS}.  
This fact may be also shown along with our setting by replacing the
Heegaard splitting in \cite{GJS} by the torus decomposition corresponding to the Dehn surgery.  
\par
Here we obtained the skein module by using the knot diagram. 
Such construction through the diagram may be useful to analyze the geometric meaning of the skein algebra as the case of the volume conjecture.  
The study of the volume conjecture clarifies the geometric meaning of the quantum $R$-matrix, more precisely,  the quantum $R$-matrix has the local geometric information around the crossing.  
\end{document}